\theoremstyle{plain}
\newtheorem{theorem}{Theorem}[section]
\newtheorem{lemma}[theorem]{Lemma}
\theoremstyle{remark}
\theoremstyle{definition}
\newtheorem{definition}[theorem]{Definition}
\newcommand{\R}{\mathbb{R}}
\title{Application of the semigroup  theory to a combustion problem in a multi-layer porous medium}
\author[1]{E. A. Alarcon\thanks{Instituto de Matem\'atica e Estat\'istica-IME - Universidade Federal de Goi\'as-UFG, Campus II, Goi\^ania , GO, Brazil (In Memoriam)}, M. R. Batista\thanks{Instituto Federal de Goi\'as-IFG, Campus Goi\^ania, GO, Brazil (e-mail: marcos.batista@ifg.edu.br )}, A. Cunha\thanks{Instituto de Matem\'atica e Estat\'istica - Universidade Federal de Goi\'as, Campus II, Goi\^ania , GO, Brazil (e-mail: alysson@ufg.br)}\and J. C. Da Mota\thanks{Instituto de Matem\'atica e Estat\'istica - Universidade Federal de Goi\'as, Campus II, Goi\^ania , GO, Brazil (e-mail: jesusdamota@gmail.com )}, R. A. Santos\thanks{Instituto de Matem\'atica e Estat\'istica - Universidade Federal de Goi\'as, Campus II, Goi\^ania , GO, Brazil (e-mail: rasantos@ufg.br)}}
\begin{document}
\maketitle

\begin{abstract} 
	
This study proved that the Cauchy problem for 
a one-dimensional reaction-diffusion-convection system is locally and globally well-posed in $\mathtt{H}^2(\R)$. 
The system modeled a gasless combustion front through a multi-layer porous medium when the fuel concentration in each layer was a known function. Combustion has critical practical porous media applications, such as in in-situ combustion processes in oil reservoirs and several other areas.

Earlier studies considered physical parameters (e.g., porosity, thermal conductivity, heat capacity, and initial fuel concentration ) constant. Here, we consider a more realistic model where these parameters are functions of the spatial variable rather than constants. Furthermore, in previous studies, we did not consider the continuity of the solution regarding the initial data and parameters, unlike the current study. This proof uses a novel approach to combustion problems in porous media. We follow the abstract semigroups theory of operators in the Hilbert space and the well-known Kato's theory for a well-posed associated initial value problem.
	
\end{abstract}

{\bf Keywords:}	Reaction-diffusion-convection system,	Combustion, Multilayer porous medium,  
 Semigroups theory,  Kato's theory, Well-posedness solution.
	
{\bf MSC:} 35K51, 35K57, 76S05, 80A25.

\section{Introduction}\label{introduction}
We can find a short review of combustion applications in porous media in 
\cite{Banerjee, Gharehghani, Mujeebu}. Among the several applications is the in-situ combustion process for oil recovery from a petroleum reservoir \cite{Ado, Ameli, Gutierrez, Sarathi}. Many petroleum reservoirs with crude oil have layers with different physical properties \cite{Gao, Lefkovits}. Motivated by these applications,   
we consider a one-dimensional model for the combustion of oxygen and solid fuel in a porous medium with $n$ $(n \ge 2)$ parallel layers. Full derivation of the model
is given by \cite{Batista1}: With some simplifying assumptions, such as incompressibility, the model reduces to a simple system of $n$ reaction-diffusion-convection equations, coupled with $n$ ordinary differential equations, governing the temperature $u_i = u_i(x, t)$ and the fuel concentration $y_i = y_i(x, t)$ in layer $i$ $(i = 1,\ldots, n)$, where $x \in \R$ is the space variable and $t \ge 0$ is the time. 

The model is a generalization of the two-layer model derived in \cite{DaMota-Schecter}, where traveling wave solutions were found. \cite{DaMota-Marcelo-Ronaldo} solved the Cauchy problem for this two-layer model.
Traveling waves have been observed in several practical problems 
\cite{Chen, Song, Zhen}, including some other combustion problems \cite{Chapiro, DaMota-Cido, Ghazaryan}.

In \cite{Batista1}, the existence and uniqueness of the classical local and global solutions over time 
were proven for an initial and boundary value problem for the simplified model, where the fuel concentrations in the layers are known functions. The main tool used is the monotonous iterative method for the upper and lower solutions. Recently, in \cite{Batista2}, by using Schauder's fixed-point theorem,
a classical solution was proven to exist for the complete model's initial and boundary value problem, where the fuel concentrations are also unknown.
The study considered physical properties (e.g., porosity, conductivity, heat capacity, and initial fuel concentration) constant in each layer. 

We also assumed that the fuel concentration in each layer is a known function; however, we considered a more realistic model, where the physical properties mentioned above depend on the spatial variable $x$. We proved the existence and uniqueness of local and global classical solutions in time for an initial value problem (Cauchy problem) associated with the model. We also proved continuous dependence for the initial data and parameters in the sense of Theorem~\ref{combusiton-continuous-dependence-theo} below. Thus, the following initial value problem is well-posed in $\mathtt{H}^2(\R)$.
\begin{align}\label{combustion-problem}
\left\{
\begin{array}{l}
u_t + L(t)u = f(x, t, u), \,\,\, x \in \R, \,\,\, t >0,  \\
u(x, 0) = \phi(x). 
\end{array}
\right.
\end{align}
Here, $u = (u_1, \ldots, u_n)$ is an unknown vector of temperatures 
$\phi = (\phi_1, \ldots, \phi_n)$ is the given vector of the initial temperatures, and $L(t)$ is the
partial differential operator defined by
\begin{align}\label{operatorL} 
L(t)u = \big(L_1(t)u_1, \ldots, L_n(t)u_n \big),
\end{align}
where
\begin{equation}\label{operatorLi}
L_i(t)u_i := -\alpha_i(x, t)\,\partial_{xx}u_i + \beta_i(x, t)\,\partial_{x}u_i,\,\,\,
i = 1, \ldots, n,
\end{equation}
with coefficients
\begin{equation}\label{alpha-beta}
\alpha_i(x, t) = \dfrac{\lambda_i(x)}{a_i(x) + b_i(x)\,y_i(x,\,t)}, \,\,\, 
\beta_i(x, t) = \dfrac{c_i(x)}{a_i(x) + b_i(x)\,y_i(x,\,t)}\,.
\end{equation}
Functions $a_i$, $b_i$, $c_i$, and $\lambda_i$ for $i = 1, \ldots, n$ are defined depending on the physical parameters (see \cite{Batista1}), which  are known functions of the spatial variable $x$. The fuel concentration $y_i$ is known to be nonnegative and bounded\footnote{We can reasonably assume that fuel concentrations $y_i$ take their values in the interval $[0, 1]$, as this is physically expected, in view of the concentration definition.} function of $(x, t)$ as follows:  
The combustion reaction rate, heat transfer between two adjacent layers, and heat loss to the external medium are all included in the source function $f = (f_1, \ldots, f_n)$, the components of which are defined by
\begin{align} \label{raction-function} 
&f_1(x, t, u) = \frac{-(c_1)_x\,u_1 }{a_1 + b_1 y_1} + 
\frac{(K_1 b_1 u_1+d_1)y_1\,g(u_1)}{a_1 + b_1 y_1} + \frac{q_1(u_2 - u_1)}{a_1 + b_1 y_1}
\nonumber
\\   
& \hspace{0.8in} - \frac{\overline q_1(u_1 - u_e)}{a_1+b_1 y_1}\,,
\nonumber \\
&f_i(x, t, u) = \frac{-(c_i)_x\,u_i }{a_i + b_i y_i} + 
\frac{(K_i b_i u_i+d_i)y_i\,g(u_i)}{a_i + b_i y_i} - \frac{q_{i-1}(u_i - u_{i-1})}{a_i + b_i y_i}
\\
& \hspace{0.8in} + \frac{q_i(u_{i+1} - u_i)}{a_i + b_i y_i}, \quad i = 2, \ldots, n-1\,,
\nonumber \\
&f_n(x, t, u) = \frac{-(c_n)_x\,u_n }{a_n + b_n y_n} + 
\frac{(K_n b_n u_n+d_n)y_n\,g(u_n)}{a_n + b_n y_n} - \frac{q_{n-1}(u_n - u_{n-1})}{a_n + b_n y_n}
\nonumber
\\
& \hspace{0.8in} - \frac{\overline q_2(u_n - u_e)}{a_n + b_n y_n}\,, 
\nonumber 
\end{align}
where $d_i$ is also a known function of the spatial variable $x$, and $g$ is a function related to the Arrhenius law given by
\begin{equation}\label{darcy} 
g(\theta)=\left\{\begin{array}{c}
e^{-\frac{E}{\theta}},\;\;\mbox{se}\;\; \theta>0\\
0,\;\;\mbox{se}\;\; \theta\leq 0\,.
\end{array}\right.
\end{equation}
The other quantities $K_i$, $q_i$, $\overline q_1$, $\overline q_2$, $E$ are non-negative 
parameters, as defined in \cite{Batista1}, and $u_e$ denotes the temperature of the external environment, which is constant.

Throughout this study, \eqref{combustion-problem} is referred to as a combustion problem. Index~$i$ refers to layer $i$; unless stated otherwise, $i = 1, \ldots, n$.
To facilitate notation, when there is no doubt, we omit the variable $x$ from $u(x, t)$, $\phi(x)$, $y(x,t)$, and $f(x, t, u)$ by writing simply $u(t)$, $\phi$, $y(t)$, and $f(t, u)$.

The solutions treated here are conventional according to the following definition: 
\begin{definition}\label{def-solution}	
A local solution to problem \eqref{combustion-problem} is a function
$u = (u_1,\ldots, u_2)$ $\in C\big([0,\,T], \,W \big) \cap C^1\big( [0,\,T], \,\mathtt{L}^2(\R)^n \big)$,
which satisfies \eqref{combustion-problem} for some $T > 0$ and some open set 
$W \subset \mathtt{H}^2(\R)^n$. The solution is global in time if it satisfies \eqref{combustion-problem} for all $T > 0$: 
\end{definition}

Our main results are summarized in the next three theorems; these proofs rely on a new approach for combustion in porous media, which uses the semigroups theory of operators in the Hilbert space and the well-known Kato's theory for the initial value problem.

\begin{theorem} [Local solution]\label{combustion-local-theo} 
Assume that hypotheses (Hy6) and (Hy7) given in Section~\ref{combustion-local} are satisfied.
If $\phi = (\phi_1, \ldots, \phi_n) \in \mathtt{H}^2(\R)^n$, then the initial value problem \eqref{combustion-problem} has a unique local solution.
Each component of this solution is given in the following integral form:  
\begin{align}\label{integral-form1}
u_i(t) = U_i(t,\,0) \phi_i + \int_0^t\,U_i(t,\,\tau)\,f_i\big(\tau,\,u(\tau)\big)\,d\tau,
\end{align}
$t \in [0, T]$ for some $T > 0$, where $U_i$ is the evolution propagation operator associated with $L_i(t)$, defined by Lemma~\ref{propagador}.
\end{theorem}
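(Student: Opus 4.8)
The plan is to recast \eqref{combustion-problem} as an abstract semilinear Cauchy problem in the Hilbert space $X := \mathtt{L}^2(\R)^n$ and to solve it by a fixed-point argument built on the evolution family supplied by Lemma~\ref{propagador}. Because $L(t)$ acts diagonally (see \eqref{operatorL}), its associated propagator is the diagonal operator $U(t,\tau) = \mathrm{diag}\big(U_1(t,\tau), \ldots, U_n(t,\tau)\big)$, and a function $u$ satisfies the Duhamel representation \eqref{integral-form1} componentwise if and only if it is a mild solution of \eqref{combustion-problem}. First I would record the mapping and Lipschitz properties of the nonlinearity. In one space dimension $\mathtt{H}^2(\R)$ embeds continuously in $C^1_b(\R)$ and is a Banach algebra, so products of $\mathtt{H}^2$ functions stay in $\mathtt{H}^2$; the coefficient functions $a_i, b_i, c_i, d_i$ and the fuel concentrations $y_i$ are, by the standing assumptions, bounded with bounded derivatives and the denominators $a_i + b_i y_i$ are bounded below away from zero, while $g$ in \eqref{darcy} is $C^\infty$ with all derivatives vanishing at the origin, hence $\theta \mapsto g(\theta)$ induces a smooth Nemytskii map on $\mathtt{H}^2(\R)$. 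Combining these facts shows that $f(t,\cdot)$ in \eqref{raction-function} maps $\mathtt{H}^2(\R)^n$ into itself and is Lipschitz continuous on bounded sets, uniformly for $t$ in compact intervals.

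Next I would set up the contraction. Fix $R > \|\phi\|_{\mathtt{H}^2}$ and work in the complete metric space $Y_T := \{\, u \in C([0,T], \mathtt{H}^2(\R)^n) : \|u(t) - U(t,0)\phi\|_{\mathtt{H}^2} \le R \,\}$ with the supremum-in-time norm. Define $\Phi$ by the right-hand side of \eqref{integral-form1}. Using the uniform bound $\sup_{0 \le \tau \le t \le T}\|U_i(t,\tau)\|_{\mathcal{L}(\mathtt{H}^2)} \le M$ from Lemma~\ref{propagador} together with the local Lipschitz constant $\ell_R$ of $f$ obtained above, the elementary estimates $\|\Phi(u)(t) - U(t,0)\phi\|_{\mathtt{H}^2} \le M\,T\sup_s\|f(s,u(s))\|_{\mathtt{H}^2}$ and $\|\Phi(u)(t) - \Phi(v)(t)\|_{\mathtt{H}^2} \le M\,\ell_R\,T\sup_s\|u(s) - v(s)\|_{\mathtt{H}^2}$ show that, for $T$ sufficiently small, $\Phi$ maps $Y_T$ into itself and is a strict contraction. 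The Banach fixed-point theorem then yields a unique $u \in Y_T$ satisfying \eqref{integral-form1}, and uniqueness in the full class $C([0,T],\mathtt{H}^2(\R)^n)$ follows from the same Lipschitz estimate and Gronwall's inequality.

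It remains to verify that this mild solution is a solution in the sense of Definition~\ref{def-solution}, that is, $u \in C([0,T],W) \cap C^1([0,T], \mathtt{L}^2(\R)^n)$ solving \eqref{combustion-problem} pointwise. Here I would invoke the regularity of the evolution family from Lemma~\ref{propagador}: for fixed $\tau$, the map $t \mapsto U_i(t,\tau)$ is strongly differentiable with $\partial_t U_i(t,\tau) = -L_i(t)U_i(t,\tau)$, and the parabolic smoothing of the propagator forces the Duhamel integral to take values in $D(L_i(t)) = \mathtt{H}^2(\R)$. Since $t \mapsto f_i(t, u(t))$ is continuous into $\mathtt{L}^2(\R)$ (indeed into $\mathtt{H}^2(\R)$), differentiating \eqref{integral-form1} in $t$ produces $\partial_t u_i + L_i(t)u_i = f_i(t,u)$ in $\mathtt{L}^2(\R)$, which is exactly \eqref{combustion-problem}. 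I expect this regularity upgrade to be the main obstacle: establishing differentiability of the Duhamel term in the $\mathtt{L}^2$-topology together with membership in the domain $\mathtt{H}^2$ requires the full strength of the non-autonomous evolution estimates encoded in hypotheses (Hy6) and (Hy7), whereas the contraction step itself is comparatively routine once the algebra and Nemytskii estimates for $f$ are in place.
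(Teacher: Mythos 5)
Your proposal lands on the same Duhamel-plus-Banach-fixed-point skeleton as the paper, but it diverges in one substantive design choice and contains one mis-stated step. The design choice: you contract in the single norm of $C([0,T],\mathtt{H}^2(\R)^n)$, which obliges you to prove that $w \mapsto f(t,w)$ is locally Lipschitz in the $\mathtt{H}^2$-norm. That estimate is in fact true under (Hy6)--(Hy7) --- the paper proves it, but only later and for a different purpose, in Lemma~\ref{properties-fi-H2} for the continuous-dependence theorem --- whereas the paper's local-existence proof uses Kato's two-norm device instead: the fixed-point set $E_T$ in \eqref{conjuntoE_T} consists of curves bounded by $R$ in $\mathtt{H}^2$ but is metrized by the weaker $\mathtt{L}^2$ sup-distance, so only the $\mathtt{L}^2$-Lipschitz bound of Lemma~\ref{properties-fi}(iii) is needed, with completeness of $E_T$ resting on the weak-closure and weak-continuity Lemmas~\ref{closed} and~\ref{continuous}. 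Your route buys a simpler functional-analytic setting at the price of the harder nonlinear estimate; the paper's route buys weaker hypotheses on the nonlinearity at the price of the two-norm bookkeeping. Your Nemytskii argument for $g$ is sound ($g$ is $C^\infty$ with bounded derivatives and $g(0)=0$), and your Gronwall uniqueness matches the paper's.

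The mis-stated step is your regularity upgrade: you invoke ``parabolic smoothing'' of the propagator to place the Duhamel integral in $D(L_i(t))=\mathtt{H}^2(\R)$. No smoothing estimate is available or ever established here --- $U_i$ is constructed by Kato's hyperbolic-type CD-system machinery from quasi-m-accretivity (Lemmas~\ref{propagadorfront} and~\ref{propagador-combustao}), which preserves $\mathtt{H}^2$ but does not smooth. Fortunately none is needed, even in your own setup: $f(s,u(s))$ already takes values in $\mathtt{H}^2(\R)^n$ by Lemma~\ref{properties-fi}(i), $U(t,s)$ leaves $\mathtt{H}^2$ invariant (Lemma~\ref{propagador}(iii)), and the passage from mild to genuine solution in $C([0,T'],\mathtt{H}^2(\R)^n)\cap C^1([0,T'],\mathtt{L}^2(\R)^n)$ is exactly Kato's linear result (Theorem~\ref{Linear}, or Theorem~\ref{TeoKato1}) applied with the inhomogeneity $t\mapsto f(t,u(t))$, which is continuous into $\mathtt{L}^2$ and bounded --- hence, by Lemma~\ref{continuous}, weakly continuous and integrable --- into $\mathtt{H}^2$. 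Relatedly, your parenthetical claim that $t\mapsto f(t,u(t))$ is continuous \emph{into} $\mathtt{H}^2$ exceeds what (Hy7) delivers and what the paper proves (continuity only into $\mathtt{L}^2$, Lemma~\ref{properties-fi}(ii)); you should drop it, since boundedness in $\mathtt{H}^2$ plus $\mathtt{L}^2$-continuity suffices everywhere it is used.
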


\begin{theorem}[Global solution] \label{combustion-global-theo} 
We assume that the hypothesis (Hy8) given in Section~\ref{combustion-global} is satisfied. 
If $\phi = (\phi_1, \ldots, \phi_n) \in \mathtt{H}^2(\R)^n$, then the initial value problem \eqref{combustion-problem} has a unique global solution: Each component of this solution is given in the integral form, as in \eqref{integral-form1}, for any $T > 0$.
\end{theorem}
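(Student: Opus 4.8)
The strategy is to promote the local solution furnished by Theorem~\ref{combustion-local-theo} to a global one by excluding finite-time blow-up of the $\mathtt{H}^2$ norm. Since the operator $L(t)$ in \eqref{operatorL} depends only on the \emph{known} concentrations $y_i$ and not on $u$, the problem \eqref{combustion-problem} is semilinear, and the mild-solution identity \eqref{integral-form1} holds with the linear evolution operators $U_i$ of Lemma~\ref{propagador}. I would first apply the local theorem to obtain a maximal interval of existence $[0, T_{\max})$, together with the continuation principle: if $T_{\max} < \infty$, then $\limsup_{t \to T_{\max}^-} \|u(t)\|_{\mathtt{H}^2(\R)^n} = \infty$. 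Everything then reduces to showing that this supremum is finite on each compact subinterval, which is where hypothesis (Hy8) enters.

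The decisive structural fact is that the source term $f$ grows at most linearly in $u$. The Arrhenius factor $g$ of \eqref{darcy} is smooth on all of $\R$ with globally bounded derivatives of every order (the one-sided limits of every derivative at $\theta = 0$ vanish, since $e^{-E/\theta}$ decays faster than any power), and in particular $0 \le g \le 1$. Because the fuel concentrations $y_i$ are bounded and the coefficients $a_i, b_i, c_i, d_i$ in \eqref{raction-function} are controlled under (Hy8) with $a_i + b_i y_i$ bounded away from zero, the only $u$-dependence is affine apart from the bounded composition $g(u_i)$. Using the Banach-algebra embedding $\mathtt{H}^2(\R) \hookrightarrow L^\infty(\R)$ and the chain rule for composition with the smooth, bounded map $g$ (which satisfies $g(0)=0$, so the composition maps $\mathtt{H}^2$ into itself), I would derive an estimate of the form
\begin{equation*}
\|f(t, u)\|_{\mathtt{H}^2(\R)^n} \le M\big(1 + \|u\|_{\mathtt{H}^2(\R)^n}\big), \qquad t \in [0, T],
\end{equation*}
with $M = M(T)$ independent of $u$.

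Feeding this into \eqref{integral-form1} and invoking the uniform bound $\|U_i(t, \tau)\|_{\mathcal{L}(\mathtt{H}^2)} \le C_T$ for $0 \le \tau \le t \le T$ supplied by Lemma~\ref{propagador}, the triangle inequality yields
\begin{equation*}
\|u(t)\|_{\mathtt{H}^2(\R)^n} \le C_T\|\phi\|_{\mathtt{H}^2(\R)^n} + C_T\,M \int_0^t \big(1 + \|u(\tau)\|_{\mathtt{H}^2(\R)^n}\big)\, d\tau .
\end{equation*}
Gronwall's inequality then bounds $\|u(t)\|_{\mathtt{H}^2(\R)^n}$ by a finite constant depending only on $T$, $\|\phi\|_{\mathtt{H}^2(\R)^n}$, and the parameters, uniformly on $[0, T] \cap [0, T_{\max})$. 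As $T > 0$ is arbitrary, the norm stays bounded up to any finite time, so the continuation principle forces $T_{\max} = \infty$ and the local solution extends to a global one; uniqueness on each $[0, T]$ is inherited directly from Theorem~\ref{combustion-local-theo}.

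I expect the main obstacle to be the $\mathtt{H}^2$ version of the linear-growth estimate for $f$: unlike an $\mathtt{L}^2$ or $\mathtt{L}^\infty$ bound, controlling the second spatial derivatives of the products $u_i\,y_i\,g(u_i)$ and of the convective term $(c_i)_x\,u_i/(a_i + b_i y_i)$ requires that the coefficients $a_i, b_i, c_i, d_i$ and the data $y_i$ carry two bounded spatial derivatives and that the denominator stay uniformly positive — precisely the role I anticipate for hypothesis (Hy8). The second delicate point is securing a propagator bound $C_T$ from Kato's theory that does not degenerate, so that the Gronwall argument closes on every finite interval rather than only for small $T$.
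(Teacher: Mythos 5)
Your overall architecture (maximal interval, blow-up criterion, a priori $\mathtt{H}^2$ bound via the mild formulation, Gronwall) is reasonable, but the step it all rests on --- the linear-growth estimate $\|f(t,u)\|_{\mathtt{H}^2(\R)^n}\leq M\big(1+\|u\|_{\mathtt{H}^2(\R)^n}\big)$ with $M$ independent of $u$ --- is false, and this is a genuine gap. Expanding $\partial_x^2 f_1$ as the paper does in \eqref{f1xx}, one meets the terms $\alpha\, g'(z)z_x^2$, $\gamma\, g''(z)z_x^2$ and $\alpha\, g''(z) z_x^2 z$ with $z=u_1$. The Arrhenius factor $g$ of \eqref{darcy} is indeed $C^\infty$ with all derivatives bounded, and even $\theta g'(\theta)$, $\theta g''(\theta)$ are bounded, so the compositions cause no trouble; the problem is the factor $z_x^2$, which obeys only $\|z_x^2\|_{\mathtt{L}^2(\R)}\leq \|z_x\|_{\mathtt{L}^\infty(\R)}\|z_x\|_{\mathtt{L}^2(\R)}\leq C\|z\|^2_{\mathtt{H}^2(\R)}$, and this quadratic growth is sharp (test with oscillatory/scaled $z$). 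Hypothesis (Hy8) controls the coefficients $a_i,b_i,c_i,d_i,\lambda_i$ and $y_i$, exactly as you anticipated, but it cannot linearize these quadratic-in-$u$ derivative terms. With only a quadratic bound, your integral inequality becomes Riccati-type and Gronwall does not close: you recover local existence, not global. So the obstacle you flagged at the end is not a technicality to be absorbed by (Hy8); it defeats the one-step $\mathtt{H}^2$ Gronwall as written.

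The paper takes a different route precisely to avoid this. It never claims linear growth: Lemma~\ref{properties-fi-global}(i) gives only $\|f(t,w)\|_{\mathtt{H}^2(\R)^n}\leq\mu$ for $w$ in a fixed ball $W$ (with $\mu$ depending on the radius), and the global proof is run at the level of the differential equation, not the mild formulation. Following Theorem~\ref{scalar-global-theo}, one differentiates the equation twice in $x$ (equations \eqref{vx}--\eqref{vxx}), uses quasi-m-accretivity of the shifted operators $A_{G_1},A_{G_2}$ together with the Regularity Lemma~\ref{regularidade} (a Gagliardo--Nirenberg absorption of $\|\phi'\|_{\mathtt{L}^2}$ by $\mu_0\|\phi''\|^2_{\mathtt{L}^2}$), and applies a differential Gronwall inequality to $\|u\|_{\mathtt{L}^2}$ and $\|u_{xx}\|_{\mathtt{L}^2}$ separately, yielding the a priori bound \eqref{finalestima} on $[0,T^*)$ and hence continuation past $T^*$. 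If you want to keep your mild-solution strategy, it can be repaired by a cascade rather than one Gronwall: $f$ is globally $\mathtt{L}^2$-Lipschitz (its $w$-gradient is globally bounded since $\theta g'(\theta)$ is bounded), giving an $\mathtt{L}^2$ bound; the $\mathtt{H}^1$ norm of $f$ does grow linearly (every first-derivative term carries at most one derivative of $u$), giving an $\mathtt{H}^1$ bound; and once $\|u\|_{\mathtt{H}^1}$ is controlled, the offending terms satisfy $\|z_x^2\|_{\mathtt{L}^2(\R)}\leq C\|z_x\|^{3/2}_{\mathtt{L}^2(\R)}\|z_{xx}\|^{1/2}_{\mathtt{L}^2(\R)}$, which is sublinear in $\|z\|_{\mathtt{H}^2(\R)}$, so the final $\mathtt{H}^2$ Gronwall closes --- at the cost of also justifying an $\mathtt{H}^1$ propagator bound, which Lemma~\ref{propagador} does not state directly (interpolation between its $\mathtt{L}^2$ and $\mathtt{H}^2$ assertions, or an energy estimate, supplies it). Your propagator bound $C_T$ on $\mathtt{H}^2$ itself is fine: it is exactly the $e^{\tilde\beta T}$ bound the paper uses in \eqref{contE2}.
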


\begin{theorem}[Continuous dependence]\label{combusiton-continuous-dependence-theo}
Let us assume the same hypotheses as in Theorem~\ref{combustion-local-theo}. Then, the function that maps the initial data and the parameters into the solution given by this theorem is continuous 
in the $\mathtt{H}^2(\R)^n\text{-norm}$. Similarly, let us assume the same hypotheses as in Theorem~\ref{combustion-global-theo}; then, the analogous result holds.
\end{theorem}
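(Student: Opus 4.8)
The plan is to reduce the assertion to a single integral inequality and close it with Gronwall's lemma, exploiting the representation \eqref{integral-form1} together with the evolution operators of Lemma~\ref{propagador}. Write $p$ for the collection of parameters entering \eqref{alpha-beta}--\eqref{raction-function} (the coefficient functions $a_i, b_i, c_i, \lambda_i, d_i$, the fuel concentrations $y_i$, and the scalars $K_i, q_i, \overline q_1, \overline q_2, E, u_e$). First I would fix two admissible data sets $(\phi, p)$ and $(\tilde\phi, \tilde p)$ and invoke Theorem~\ref{combustion-local-theo} to obtain the corresponding local solutions $u$ and $\tilde u$; I would put them on a common interval $[0, T]$ by taking the smaller of the two existence times, which stays bounded below under small perturbations of the data. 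Denoting by $U_i, \tilde U_i$ the evolution operators associated with the operators $L_i(t), \tilde L_i(t)$ built from $p, \tilde p$ via \eqref{operatorLi}, I would subtract the two copies of \eqref{integral-form1} componentwise.

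The natural step is then to insert and cancel intermediate terms so that the difference splits into one contribution from the initial data, two contributions from the source, and two contributions from the evolution operators:
\begin{align*}
u_i(t)-\tilde u_i(t)
&= U_i(t,0)(\phi_i-\tilde\phi_i)
 + \big(U_i(t,0)-\tilde U_i(t,0)\big)\tilde\phi_i \\
&\quad + \int_0^t U_i(t,\tau)\big(f_i(\tau,u(\tau))-f_i(\tau,\tilde u(\tau))\big)\,d\tau \\
&\quad + \int_0^t U_i(t,\tau)\big(f_i(\tau,\tilde u(\tau))-\tilde f_i(\tau,\tilde u(\tau))\big)\,d\tau \\
&\quad + \int_0^t \big(U_i(t,\tau)-\tilde U_i(t,\tau)\big)\tilde f_i(\tau,\tilde u(\tau))\,d\tau .
\end{align*}
The first and third terms are controlled by the uniform bound on $U_i$ furnished by Lemma~\ref{propagador} together with the local Lipschitz continuity of $f_i$ in $u$ already used for Theorem~\ref{combustion-local-theo}; the latter produces the factor $\|u(\tau)-\tilde u(\tau)\|$ that will feed Gronwall. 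The fourth term I would estimate using the explicit formulas \eqref{alpha-beta}--\eqref{raction-function}, bounding $\|f_i(\tau,\cdot)-\tilde f_i(\tau,\cdot)\|$ by $\|p-\tilde p\|$ on the (bounded) range of $\tilde u$.

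The crux is the second and fifth terms, namely the dependence of the evolution operator on the generating family. For this I would use the variation-of-parameters identity
\[
U_i(t,\tau)-\tilde U_i(t,\tau)
= \int_\tau^t \tilde U_i(t,\sigma)\big(\tilde L_i(\sigma)-L_i(\sigma)\big)U_i(\sigma,\tau)\,d\sigma ,
\]
and combine it with the operator-norm estimate $\|L_i(\sigma)-\tilde L_i(\sigma)\|_{\mathtt{H}^2(\R)\to\mathtt{L}^2(\R)}\le C\|p-\tilde p\|$, which follows from \eqref{operatorLi}--\eqref{alpha-beta} once the perturbation of $\alpha_i, \beta_i$ is written in terms of the perturbation of $\lambda_i, a_i, b_i, c_i, y_i$. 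Collecting the five bounds yields
\[
\|u(t)-\tilde u(t)\|_{\mathtt{H}^2(\R)^n}
\le C_T\big(\|\phi-\tilde\phi\|_{\mathtt{H}^2(\R)^n}+\|p-\tilde p\|\big)
 + C_T\int_0^t \|u(\tau)-\tilde u(\tau)\|_{\mathtt{H}^2(\R)^n}\,d\tau ,
\]
and Gronwall's inequality then gives $\|u(t)-\tilde u(t)\|_{\mathtt{H}^2(\R)^n}\le C_T e^{C_T T}\big(\|\phi-\tilde\phi\|_{\mathtt{H}^2(\R)^n}+\|p-\tilde p\|\big)$, which is exactly the claimed continuity. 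For the global statement the identical estimate holds on any finite $[0, T]$ using Theorem~\ref{combustion-global-theo}, so continuity persists on each compact time interval.

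The main obstacle I anticipate is making the two evolution-operator differences rigorous: one must justify the variation-of-parameters identity and the operator-norm bound at the level of the $\mathtt{H}^2\to\mathtt{L}^2$ (graph-norm) topology in which Lemma~\ref{propagador} is phrased, since $\tilde L_i(\sigma)-L_i(\sigma)$ maps $\mathtt{H}^2$ into $\mathtt{L}^2$ and one needs $U_i(\sigma,\tau)$ to map the relevant data into $\mathtt{H}^2$ with uniform bounds. This regularity bookkeeping, inherent to Kato's framework, is the technically delicate part; the remaining estimates are routine once the Lipschitz bounds on $f$ from the local theory and the explicit dependence of the coefficients on $p$ are in hand.
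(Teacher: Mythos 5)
Your skeleton for the initial-data part matches the paper's: subtract the two integral representations, use boundedness of the evolution operators and Lipschitz continuity of $f$, and close with Gronwall. But even there you have a norm mismatch: the Lipschitz bound ``already used for Theorem~\ref{combustion-local-theo}'' (Lemma~\ref{properties-fi}(iii)) is only an $\mathtt{L}^2(\R)^n$-Lipschitz estimate, while your Gronwall inequality is asserted in the $\mathtt{H}^2(\R)^n$-norm, which is what the theorem claims. The paper has to prove a separate result, Lemma~\ref{properties-fi-H2}, showing $w \mapsto f(t,w)$ is Lipschitz in the $\mathtt{H}^2(\R)^n$-norm (a nontrivial computation involving all second derivatives of the nonlinearity, cf.\ the twenty terms $\tilde h_1,\dots,\tilde h_{20}$), precisely because the $\mathtt{L}^2$-Lipschitz bound cannot feed an $\mathtt{H}^2$-Gronwall argument. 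Without that lemma your first and third terms do not close the loop in the claimed topology.

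The more serious gap is the one you yourself flag but do not resolve: the evolution-operator terms. Your variation-of-parameters identity, combined with the bound $\|\tilde L_i(\sigma)-L_i(\sigma)\|_{\mathcal{B}(\mathtt{H}^2,\mathtt{L}^2)}\le C\|p-\tilde p\|$, controls $U_i(t,\tau)-\tilde U_i(t,\tau)$ only in $\mathcal{B}(\mathtt{H}^2(\R),\mathtt{L}^2(\R))$: applying it to $\tilde\phi_i$ or to $\tilde f_i(\tau,\tilde u(\tau))$ yields smallness of those two terms in $\mathtt{L}^2$, with a loss of two derivatives. Your final displayed inequality in the $\mathtt{H}^2(\R)^n$-norm therefore does not follow from the estimates you assembled; the derivative loss is not ``regularity bookkeeping'' but the actual obstruction, and no interpolation or enlargement of the data space is offered to remove it. The paper avoids the problem entirely: after establishing $(A_i)^j(t)\to A_i(t)$ in $\mathcal{B}(\mathtt{H}^2,\mathtt{L}^2)$ with a uniform integrable bound (Lemma~\ref{thA}) and $f^j(t,w)\to f(t,w)$ in $\mathtt{H}^2$ (Lemma~\ref{thF}), it invokes Kato's abstract continuous-dependence theorem \cite[Theorem~7]{Kato3}, whose stability and commutator machinery is exactly what converts $\mathcal{B}(Y,X)$-convergence of generators into $Y$-norm convergence of solutions. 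Note also that the paper must \emph{add} a hypothesis for the parameter part --- $a_i$, $b_i$, $(c_i)_x$, $d_i$, $\lambda_i$, $y_i(t)$ all in $\mathtt{H}^2(\R)$, with convergence in $\mathtt{H}^2$ (and in $L^\infty$ for the coefficients $\alpha_i,\beta_i$) --- whereas your $\|p-\tilde p\|$ is left topologically unspecified; under the standing hypotheses Hy6--Hy7 alone your coefficient estimates are not available in the form you need.
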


Theorems \ref{combustion-local-theo}, \ref{combustion-global-theo}, and \ref{combusiton-continuous-dependence-theo} ensure that problem \eqref{combustion-problem} is locally and globally well posed.

The rest of this paper is organized as follows.

In Section~\ref{preliminaries}, we describe the preliminary results of this study. First, a list of notations and basic definitions is presented. Then, we present some results on the semigroup theory for an abstract linear evolution equation in Banach space, where we focus only on results that will be used to find the local solution to an initial value problem for a general second-order semi-linear scalar equation. This local solution is described in \ref{scalar-local}.
In \ref{scalar-global}, the maximal solution concept is extended to a global solution. 
A complete study of this subject can be found in a series of papers by Kato~ \cite{Kato1, Kato2, Kato3, Kato4, Kato5, Kato6}.

In Section~\ref{combustion-local}, following the proof of the general scalar equation, we prove Theorem~\ref{combustion-local-theo}. The main tool is based on the evolution operator for the closure 
of the $L_i(t)$ operator in $\mathtt{L}^2(\R)$, which is determined by Lemma~\ref{propagador-combustao}
for each $t \in [0, T]$.

In Section~\ref{combustion-global}, we prove Theorem~\ref{combustion-global-theo}, where 
the closure of $L_i(t)$ in $\mathtt{L}^2(\R)$ must be valid for each $t >0$.

In Section~\ref{combustion-continuous-dependence}, we prove Theorem~\ref{combusiton-continuous-dependence-theo}. First, we prove continuous dependence for only the initial data. Then, including the hypothesis that the parameters are functions of $\mathtt{H}^2(\R)$, we prove the continuous dependence for the initial data and parameters. In each case, the proof holds for any fixed $T>0$ ; it encompasses local and global continuity. 

\section{Preliminaries}\label{preliminaries}

This section presents the main notations used in this study and describes the abstract linear evolution problem results.

\subsection{Notations and general definitions}\label{notation}
The set of real numbers is denoted by $\R$, $I \subset \R$ is an interval, and $T$ is a 
positive number.
We denote $X$ and $Y$ Banach spaces such that $Y \subset X$.
The spaces of type $\mathtt{L}^p$ used in this study are $\mathtt{L}^1(\R)$, $\mathtt{L}^2(\R)$, and $\mathtt{L}^{\infty}(\R)$. The Sobolev spaces are $\mathtt{H}^1(\R)$ and $\mathtt{H}^2(\R)$.
\begin{itemize}
	\item[] $| \cdot |$ is the absolute value or Euclidean norm in $\R^n$. 
	 \item[] $\|\cdot\|_{X}$ is the norm on space  $X$. 
	\item[] $X^n = X \times \ldots \times X$, \,\,$n$ times, with the norm
     $\|u=(u_1, \ldots, u_n) \|_{X^n}	= \max\big\{\|u_i \|_X, \,\, i=1,\ldots, n  \big\}.$   	
	\item[] $<\,,\,>$ is the inner product in the Hilbert space.
	\item[] $\partial_{x}=\frac{\partial}{\partial x},\,\partial_{t}=\frac{\partial}{\partial t}$.
	\item[] $\Omega=\{(x,t);\, x \in \R,\,\,t \ge 0\}$.
	\item[] $\Omega_T=\{(x,t);\, x \in \R,\,\,0 \leq t \leq T, \,\,\, T > 0\}$.
	\item[] $C(I,\, X)$ is the space of continuous functions defined on $I$ into  $X$. 
	If $I$ is compact, then it is a Banach space with the supremum norm.
	\item[] $d(u,v)=\sup_{[0,T]}\|u(t)-v(t)\|_X$ is the metric in $C(I,\, X)$.
	\item[]   $C^1(I,\, X)$ is the space of continuously differentiable functions defined on $I$ into  $X$. 
	\item[] $C^{\infty}_0(\R)$ is the space of infinitely differentiable functions defined in $\R$ of compact support.
	\item[] $B(Y,X)$ \big($B(X) = B(X,X)$\big) is the space of all bounded linear operators from $Y$ to $X$ 
	with norm $\|\cdot\|_{B(Y,X)}$.
	\item[] $D(A)$ is the domain of the $A$ operator.
	\item[] $R(A)$ is the range of the $A$ operator.
    \item[] $\rho(A)$ is the resolvent of the $A$ operator.
    \item[] $\mathcal{S}(\R)$ is the Schwarz space of rapidly decreasing $C^{\infty}$ functions.
    \item[] For $g : [0,\,T] \rightarrow X$, and $A : [0,\,T] \rightarrow B(Y, X)$,
    \begin{align*}
    & \| g \|_{\infty, \,X} = \sup_{0 \leq t \leq T} \| g(t) \|_X , \,\,\, \| g \|_{1, \,X} = \int_0^T \| g(t) \|_X\,dt,\\
    & \| A \|_{\infty,\, B(Y, X)} = \sup_{0 \leq t \leq T} \| A(t) \|_{B(Y, X)}\,.
    \end{align*}
    \item[] $C\left(I,\, B(Y, X)\right)$ is the space all norm-continuous functions.
    \item[] $C_* \left(I,\, B(Y, X)\right)$ is the space of all strongly continuous functions on $Y$ to $X$.
    \item[] $F \in \mathtt{L}_*^\infty \left(I,\, B(Y, X)\right)$ implies that $F(t) \in B\left(Y, X\right)$ is defined for a.e. 
    $t \in I$, is strongly measurable (i.e., $F(t) y \in X$ is strongly measurable in $t$ for each $y \in Y$), and $\| F(t)\|_{B(Y, X)}$ is essentially bounded in $t$.
    \item[] $\partial_t F \in  \mathtt{L}_*^\infty \left(I,\, B(Y, X)\right)$, or equivalently
    $F \in \mathrm{Lip}_*\left(I,\, B(Y, X)\right)$, indicating that there is a function $G \in  \mathtt{L}_*^\infty \left(I,\, B(Y, X)\right)$ such that $F(t)$ is an indefinite strong integral of $G$, (i.e., $F(t) y = F(0) y + \int_0^t\, G(s) y\, ds$). Here, it is understood that $F(0)$ and hence $F(t)$ belong to 
    $B(Y, X)$. In this case, we write $\partial_t F = G$; thus,
    $G$ in uniquely determined by $F$ up to equivalence.
    \item[] $\mathbb{G}(X, M, \beta)$ is the set $\big\{ A(t): \,\,\, t \in I  \big\}$ of all stable families (see Definition \ref{stable}) of negative generators of semigroups of class $C_0$
     ($C_0$-semigroups) on $X$, with stability constants $M$ and $\beta$.
    
\end{itemize}

\subsection{Evolution operator}\label{abstract}
This section describes the evolution operator for the following abstract linear evolution problem:
\begin{equation}\label{evo11}
\left\{
\begin{array}{l}
\dfrac{d u}{dt} + A(t) u = f(t),\; t \in I=[0,\,T],   \\ \\
u(0)=\phi ,
\end{array}
\right.
\end{equation}
where $A(t) : D(A(t)) \subseteq X \to X$ and $t \in [0, T]$ is
a family of (generally unbounded) linear operators acting in Banach space $X$. 
The unknown $u=u(t)$ and inhomogeneous term $f(t)$ are functions of $I = [0,\,T]$ to $X$,
and $\phi \in Y \subseteq X$ is a given function.

The case in which $A(t)$ does not depend on $t$ belongs to the theory of one-parameter semigroups of operators (Hille-Yosida theory \cite{Pazy}).

\begin{definition}\label{stable}
	A family $\left\{ A(t) \colon t \in I = [0, T] \right\}$ of infinitesimal
	generators of $C_0$ semigroups on a Banach space $X$ is called {\it stable} if there are constants $M\geq 1$ and $\beta$ (called stability constants) such that
	$$
	(\beta,\,+\infty ) \subset \rho \left(A(t)\right), \;\;\; t\in [0,\,T].
	$$
	and
	$$
	\Big\|\prod^{k}_{j=1} R \big(\lambda; A(t_j)\big) \Big\| \leq M (\lambda - \beta)^{-k},\;\;\;\lambda > \beta,
	$$
where $ \rho \left(A(t)\right)$ is the resolvend of $A(t)$ and $R \big(\lambda; A(t_j)\big)$ is the resolvent operator of $A(t_j)$ 
for every finite sequence $0 \leq t_1 \leq t_2 \leq \ldots \leq t_k \leq T$. 
In this case, $A(t) \in \mathbb{G}\left(X,\,M,\,\beta\right)$.
\end{definition}

\begin{definition}\label{accretive}
Let $X$ be the Hilbert space. 	An operator $A$ in $X$ is considered {\it accretive} ($-A$ 
	is  {\it dissipative}) if:
	$$
	\mathrm{Re} \langle A x,\,x\rangle \geq 0, \,\,\,
	\mbox{for all}\;\;x \in D(A).
	$$
\end{definition}

It can be shown that (see \cite{Kato2, Reed2}) $A \in \mathbb{G}\left(X,\,1,\,\beta\right)$ 
if and only if
\begin{itemize}
	\item [(i)] $\langle A \phi, \,\phi \rangle \geq - \beta \,\|\phi\|^2,\;\;\forall \phi \in D(A)$. 
	\item [(ii)] $(A + \lambda)$ is onto for some (and therefore all) $\lambda > \beta$.
\end{itemize}
Here, $A$ is called a quasi-maximally accretive or quasi m-accretive.

As a consequence of (i), we have the following lemma. 
\begin{lemma}
A family $
\left\{ A(t) \colon t \in [0,\,T]\right\}$ of infinitesimal
generators of $C_0$ semigroups on the Hilbert space $X$ is stable if $A(t)$ is quasi m-accretive for all $t\in [0, T]$; that is, $A(t) \in \mathbb{G}\left(X,\,1,\,\beta\right)$.
\end{lemma}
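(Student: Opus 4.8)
The plan is to verify directly the two clauses of Definition~\ref{stable} with the sharp stability constant $M=1$. The essential observation is that once each $A(t)$ is quasi m-accretive, it generates a contraction-type semigroup whose resolvent obeys the \emph{single-operator} bound $\|R(\lambda;A(t))\|\le(\lambda-\beta)^{-1}$, and that the product estimate over a finite sequence $0\le t_1\le\cdots\le t_k\le T$ then collapses by mere submultiplicativity of the operator norm. The whole point is that $M=1$ makes the passage from individual generation to joint stability automatic; this is exactly the feature that fails for $M>1$, where stability is a genuinely stronger joint hypothesis.

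First I would record the single-resolvent bound. Fix $t\in[0,T]$ and $\lambda>\beta$. For $\phi\in D(A(t))$, the accretivity condition (i) gives
\begin{align*}
\big\|(\lambda+A(t))\phi\big\|\,\|\phi\|
&\ge \mathrm{Re}\,\langle(\lambda+A(t))\phi,\,\phi\rangle
= \lambda\|\phi\|^2+\mathrm{Re}\,\langle A(t)\phi,\,\phi\rangle\\
&\ge (\lambda-\beta)\|\phi\|^2,
\end{align*}
so that $\|(\lambda+A(t))\phi\|\ge(\lambda-\beta)\|\phi\|$. This lower bound yields injectivity of $\lambda+A(t)$ and, combined with the surjectivity furnished by condition (ii), shows $\lambda\in\rho(A(t))$ with the bounded inverse satisfying $\|R(\lambda;A(t))\|\le(\lambda-\beta)^{-1}$. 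Since $\lambda>\beta$ was arbitrary, $(\beta,+\infty)\subset\rho(A(t))$, which is the first clause of Definition~\ref{stable}, and it holds for every $t\in[0,T]$ with the \emph{same} constant $\beta$ because the hypothesis is precisely $A(t)\in\mathbb{G}(X,1,\beta)$ for all $t$ with a common $\beta$.

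It then remains to assemble the product estimate. For any finite sequence $0\le t_1\le\cdots\le t_k\le T$ and any $\lambda>\beta$, submultiplicativity of the operator norm together with the uniform single-resolvent bound gives
\begin{align*}
\Big\|\prod_{j=1}^{k} R\big(\lambda;A(t_j)\big)\Big\|
\le \prod_{j=1}^{k}\big\|R\big(\lambda;A(t_j)\big)\big\|
\le \prod_{j=1}^{k}(\lambda-\beta)^{-1}
= (\lambda-\beta)^{-k},
\end{align*}
which is the second clause of Definition~\ref{stable} with $M=1$. Hence $\{A(t):t\in[0,T]\}$ is stable. I do not anticipate a substantive obstacle here: the only point requiring care is that $\beta$ be taken uniform in $t$, which is already built into the assumption $A(t)\in\mathbb{G}(X,1,\beta)$, and the genuine analytic content — the resolvent bound with constant exactly $1$ — is delivered by quasi m-accretivity via the equivalence (i)--(ii) stated just above the lemma.
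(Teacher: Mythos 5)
Your proposal is correct and is precisely the standard argument the paper invokes: the paper states this lemma without a written proof, remarking only that it follows from the accretivity inequality (i), and your derivation---the Cauchy--Schwarz lower bound $\|(\lambda+A(t))\phi\| \ge (\lambda-\beta)\|\phi\|$ combined with the surjectivity in (ii) to get $\|R(\lambda;A(t))\| \le (\lambda-\beta)^{-1}$, followed by submultiplicativity to collapse the product estimate with $M=1$---is exactly the intended reasoning. Your observation that this collapse is special to $M=1$ and fails for $M>1$ is also the correct reading of why the lemma is stated only for quasi m-accretive families.
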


A complete study of the evolution equation (\ref{evo11}) is in \cite{Kato5}, where the term CD-{\it system} was introduced according to the following definition:
   
\begin{definition}\label{CD}
	A triplet $\left\{\mathcal{A};\,X,\,Y\right\}$ having a one-parameter family of operators,
	$\mathcal{A} = \left\{ A(t) \colon  t \in I = [0, T]\right\}$ and a pair of real separable Banach spaces $Y \subset X$ is called a CD-{\it system} if the following conditions are satisfied:
	
	\begin{itemize}
		\item [(i)] $\mathcal{A} = \left\{ A(t) \colon t \in I\right\}$ is a stable family of (negative) 
		generators of $C_0$-semigroups on $X$, with stability constants $M$, $\beta$, i.e., 
		$A(t) \in \mathbb{G}\left(X,\,M,\,\beta\right)$.
		\item [(ii)] The domain $D\left(A(t)\right) =Y$ of $A(t)$ is independent of $t$. We regard $Y$ as Banach space, embedded continuously and densely in $X$.
		\item [(iii)] $A \in \mathrm{L}^{\infty}_*\left(I,\,\mathcal{B}\left(Y,\,X\right)\right)$ (or, equivalently, $\frac{dA}{dt} \in \mathrm{Lip}_*\left(I,\,\mathcal{B}\left(Y,\,X\right)\right)$),
		there exists $G$ such that $G(t) \in \mathcal{B}\left(Y,\,X\right)$ is defined
		for a.e. $t \in I$, is strongly measurable; then, $\|G(t)\|_{\mathcal{B}\left(Y,\,X\right)}$ is essentially bounded in $t$ such that
		$$
		A(t) y = A(0) y + \int_0^t\,G(s) y \,ds,\;\mathrm{for}\;\;y\in Y.
		$$
	\end{itemize}
\end{definition}
Using the hypotheses of a CD-{\it system}, Kato \cite{Kato4} constructed an evolution operator associated
to the family $\mathcal{A} = \left\{ A(t) \colon t \in I\right\}$ according to the following lemma:

\begin{lemma} \label{evol-operator}
Let $\left\{\mathcal{A};\,X,\,Y\right\}$ be a CD-{\it system} as in Definition~\ref{CD}. Then, an evolution operator $U(t,\,s)$ defined in the triangular domain $\triangle: 0 \leq s \leq t \leq T$ exists that satisfies the following properties:
\begin{itemize}
	\item [(i)] $U \in C_*\left(\triangle,\,\mathcal{B}(X)\right) \cap  C_*\left(\triangle,\,\mathcal{B}(Y)\right)$, where $ C_*$ denotes the space of all strongly continuous operator functions.
	\item [(ii)] $U(t,\,s) U(s,\,r) = U(t,\,r), \;\;U(s,\,s)= I\;\; (\mathrm{Identity\,\, operator})$.
	\item [(iii)] 
	$\partial_t U(t,\,s) y = - A(t) U(t,\,s) y,  \\
	\partial_s U(t,\,s)y  = U(t,\,s) A(s) y,\;\;\;y \in Y.$
\end{itemize}
\end{lemma}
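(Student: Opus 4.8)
The plan is to construct $U(t,s)$ by Kato's time-discretization scheme: approximate the $t$-dependent family $\mathcal{A}$ by piecewise-constant families, for which an evolution operator is an explicit finite product of $C_0$-semigroups, and then pass to the limit as the mesh shrinks. First I would fix, for each $N$, a partition $0 = s_0 < s_1 < \cdots < s_N = T$ of $I$ with mesh tending to $0$ and set $A_N(t) = A(s_j)$ for $t \in [s_j, s_{j+1})$. By Definition~\ref{CD}(i) each frozen operator $A(s_j) \in \mathbb{G}(X,M,\beta)$ generates a $C_0$-semigroup $e^{-\tau A(s_j)}$ on $X$, so I define the approximate propagator $U_N(t,s)$ by composing these semigroups over the subintervals lying between $s$ and $t$, with $U_N(s,s)=I$. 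Each $U_N$ then satisfies the evolution identity (ii) exactly and the piecewise equation $\partial_t U_N(t,s) = -A_N(t)U_N(t,s)$ off the partition points.

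The uniform control in $X$ is immediate from stability: the product structure of $U_N$ together with the resolvent bound in Definition~\ref{stable} yields $\|U_N(t,s)\|_{\mathcal{B}(X)} \le M e^{\beta(t-s)}$ on all of $\triangle$, independently of $N$. To obtain convergence I would compare two approximations through the variation-of-parameters identity
\begin{equation*}
U_N(t,s)y - U_M(t,s)y = \int_s^t U_N(t,\tau)\big(A_M(\tau)-A_N(\tau)\big)U_M(\tau,s)y\,d\tau, \qquad y \in Y,
\end{equation*}
and bound the integrand in $X$ by $M e^{\beta(t-\tau)}\,\|A_M(\tau)-A_N(\tau)\|_{\mathcal{B}(Y,X)}\,\|U_M(\tau,s)y\|_Y$. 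Hypothesis (iii) of the CD-system, namely that $t\mapsto A(t)$ is Lipschitz as a $\mathcal{B}(Y,X)$-valued map, forces $\|A_M-A_N\|_{\mathcal{B}(Y,X)}\to 0$ uniformly as the meshes shrink; hence the right-hand side vanishes in the limit \emph{provided} $\|U_M(\tau,s)y\|_Y$ stays bounded uniformly in $M$. Granting that, $U_N(t,s)y$ converges for $y$ in the dense set $Y$, and the bound above extends the limit $U(t,s)$ to all of $X$, giving $U\in C_*(\triangle,\mathcal{B}(X))$ and, by passage to the limit, the cocycle property (ii).

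The hard part will be the stronger regularity in $Y$: I must show that $Y$ is invariant and that $\|U_N(t,s)\|_{\mathcal{B}(Y)}\le M' e^{\beta'(t-s)}$ holds uniformly in $N$, a second ``stability in $Y$'' estimate without which the comparison of the previous paragraph does not close. This is exactly where the full CD-system structure is needed: using condition (iii) one commutes $A(t)$ past each semigroup factor at the cost of a bounded-on-$X$ error governed by $\partial_t A$, and a Gronwall argument then controls $\|U_N(t,s)y\|_Y$. I expect essentially all of the genuine work to sit here, since the $X$-estimates follow directly from stability whereas the $Y$-level bounds require the interplay of the embedding $Y\hookrightarrow X$, the common domain $D(A(t))=Y$, and the $t$-regularity of $A(\cdot)$.

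Once $Y$-stability is secured, repeating the comparison argument in the $\mathcal{B}(Y)$ topology gives $U\in C_*(\triangle,\mathcal{B}(Y))$, completing (i). For property (iii) I would pass to the limit, for fixed $y\in Y$, in the forward relation $\partial_t U_N(t,s)y = -A_N(t)U_N(t,s)y$ and in the backward relation $\partial_s U_N(t,s)y = U_N(t,s)A_N(s)y$, using the strong continuity in $Y$ just obtained to identify the limits as $-A(t)U(t,s)y$ and $U(t,s)A(s)y$; uniqueness of the evolution operator then follows from the same variation-of-parameters identity applied to any two solutions. An alternative to the discretization scheme would be to replace each $A(t)$ by its bounded Yosida approximation, solve the resulting operator ODE by a convergent Picard iteration, and pass to the limit, but the $Y$-regularity obstacle in the third paragraph reappears there in the same form.
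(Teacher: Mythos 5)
Your proposal reconstructs exactly the argument behind this lemma: the paper itself offers no proof, quoting the result from Kato \cite{Kato4}, and Kato's construction is precisely your scheme --- piecewise-constant (frozen-coefficient) approximation by products of $C_0$-semigroups, the stability bound $\|U_N(t,s)\|_{\mathcal{B}(X)}\leq M e^{\beta (t-s)}$, the variation-of-parameters comparison of two approximants, and uniform $\mathcal{B}(Y)$-stability obtained by commuting through the graph-norm isomorphism of $Y$ onto $X$ with errors controlled by the $\mathrm{Lip}_*$ regularity of $A(\cdot)$, followed by a Gronwall argument and passage to the limit in the integrated forward and backward equations. Your outline is sound and correctly isolates the uniform $Y$-stability estimate as the genuine technical core (note that invariance of $Y$ under each frozen semigroup is actually automatic, since $Y=D(A(s_j))$ is preserved by the semigroup its own generator produces), so it matches the cited proof in all essentials.
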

The existence and uniqueness of the solution for \eqref{evo11} were also proved in \cite{Kato3}, according to the following theorem: 
\begin{theorem} \label{TeoKato1}
	Let $\left\{A,\,X,\,Y\right\}$ be the CD-{\it system}. Let $\phi \in Y$
	and $f \in \mathrm{Lip}(I,\,X)$. Then, problem (\ref{evo11}) has a unique solution $u \in C\left(I,\,Y\right) \cap C^ 1\left(I,\,X\right)$ that can be explicitly given by
	$$
	u(t) = U(t,0)\,\phi + \int_0^t U(t,\,\tau)\,f(\tau)\,d\tau.
	$$
\end{theorem}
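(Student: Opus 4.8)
The plan is to verify directly that the stated formula defines a function with the required regularity and then to establish uniqueness, leaning throughout on the evolution operator $U(t,s)$ furnished by Lemma~\ref{evol-operator}, whose properties (i)--(iii) encode everything we know about the family $\mathcal{A}$. First I would split the candidate solution as $u(t) = v(t) + w(t)$ with $v(t) = U(t,0)\phi$ and $w(t) = \int_0^t U(t,\tau)f(\tau)\,d\tau$. The homogeneous part is immediate: since $\phi \in Y$ and $U \in C_*(\triangle,\,\mathcal{B}(Y))$ by property (i), we get $v \in C(I,\,Y)$, while property (iii) gives $v'(t) = -A(t)v(t)$ in $X$, so that $v \in C^1(I,\,X)$ and $v(0) = U(0,0)\phi = \phi$.

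The inhomogeneous part $w$ is the crux. Using the stability bound $\|U(t,\tau)\|_{\mathcal{B}(X)} \le M e^{\beta(t-\tau)}$ (Definition~\ref{stable}) together with the strong continuity of $U$ and of $f$, one first checks $w \in C(I,\,X)$ and $w(0)=0$. To differentiate, I would use the evolution law (ii) to write $w(t+h) = U(t+h,t)\,w(t) + \int_t^{t+h} U(t+h,\tau)f(\tau)\,d\tau$, whence
$$
\frac{w(t+h)-w(t)}{h} = \frac{U(t+h,t)-I}{h}\,w(t) + \frac1h\int_t^{t+h} U(t+h,\tau)f(\tau)\,d\tau .
$$
The second term converges to $f(t)$ by continuity of $f$ and strong continuity of $U$; the first term converges to $-A(t)w(t)$ by property (iii), \emph{provided} one already knows $w(t) \in Y$.

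I expect the regularity statement $w(t)\in Y$ to be the only genuinely delicate point, and this is exactly where the hypothesis $f \in \mathrm{Lip}(I,\,X)$ enters. The obstruction is that $f(\tau)$ lies only in $X$, so the naive term $A(t)U(t,\tau)f(\tau)$ is not a priori meaningful. To circumvent this I would adapt the integration-by-parts device from the autonomous case: after the substitution $\tau \mapsto t-\sigma$ and writing $f(t-\sigma) = f(0) + \int_0^{\,} f'$ via the a.e. strong derivative $f'$ (which exists and is essentially bounded because here $X$ is a Hilbert space), integrate by parts to express $w(t)$ as integrals of $U$ tested against $f(0)$ and $f'$. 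One then invokes the regularizing property that time-integrals of the evolution operator carry $X$ into $Y$ (the nonautonomous analogue of $\int_0^\sigma T(r)x\,dr \in D(A)$), so that, since each $A(t)$ is closed and the resulting integrands are convergent in the graph norm, $w(t)\in Y$ with $A(t)w(t)$ continuous and bounded in $t$. This gives $w \in C(I,\,Y)$ and $w'(t) = f(t) - A(t)w(t)$, and hence $u = v+w \in C(I,\,Y)\cap C^1(I,\,X)$ solves \eqref{evo11}.

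Finally, uniqueness comes for free and simultaneously shows that \emph{any} solution must equal the representation formula. Given a solution $u \in C(I,\,Y)\cap C^1(I,\,X)$, fix $t$ and differentiate $\tau \mapsto U(t,\tau)u(\tau)$ on $[0,t]$; by the product rule, property (iii), and the equation $u'(\tau) = f(\tau) - A(\tau)u(\tau)$,
$$
\frac{d}{d\tau}\,U(t,\tau)u(\tau) = U(t,\tau)A(\tau)u(\tau) + U(t,\tau)\big(f(\tau) - A(\tau)u(\tau)\big) = U(t,\tau)f(\tau).
$$
Integrating from $0$ to $t$ and using property (ii) yields $u(t) - U(t,0)\phi = \int_0^t U(t,\tau)f(\tau)\,d\tau$, which is precisely the asserted formula; in particular the solution is unique.
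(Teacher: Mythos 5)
The first thing to note is that the paper contains no proof of Theorem~\ref{TeoKato1} at all: the statement is quoted from Kato \cite{Kato3}, so your attempt can only be measured against Kato's argument. Your uniqueness step is correct and is the standard one: differentiating $\tau \mapsto U(t,\tau)u(\tau)$ via Lemma~\ref{evol-operator}(ii)--(iii) simultaneously proves uniqueness and forces the representation formula. The homogeneous part $v(t)=U(t,0)\phi$ is likewise handled correctly.

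The genuine gap is in your treatment of $w(t)=\int_0^t U(t,\tau)f(\tau)\,d\tau$, exactly at the point you yourself flag as delicate. You transplant the autonomous trick (substitute to get $\int_0^t T(\sigma)f(t-\sigma)\,d\sigma$, shift the derivative onto $f$, and use $\int_0^\sigma T(r)x\,dr\in D(A)$), but both halves fail here. After the substitution $\tau\mapsto t-\sigma$ the integrand is $U(t,\,t-\sigma)f(t-\sigma)$: the operator now depends on $t$ through \emph{both} slots, and the partial derivatives supplied by Lemma~\ref{evol-operator}(iii) exist only when applied to vectors of $Y$, which $f(t-\sigma)\in X$ is not; so the substitution does not decouple the $t$-dependence and the derivative cannot be moved onto $f$ alone. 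More seriously, the ``regularizing property'' you invoke --- that $\int_s^t U(t,\tau)x\,d\tau\in Y$ for $x\in X$ --- is not among properties (i)--(iii), and for hyperbolic evolution families, which have no smoothing, it is essentially equivalent to what you are trying to prove: it is precisely the assertion that the mild solution with constant inhomogeneity $f\equiv x$ is classical. In the autonomous case it follows from the semigroup law $T(h)T(r)=T(h+r)$; there is no analogue allowing you to compare $U(t+h,\tau)$ with $U(t,\tau)$ inside the integral except through $U(t+h,t)$, which returns you to the unproven claim, so assuming it is circular. (A smaller slip: a.e.\ differentiability of the Lipschitz $f$ requires the Radon--Nikodym property of $X$ --- reflexivity suffices --- not Hilbertness; the abstract statement only gives a separable Banach space $X$, even though the paper's application has $X=\mathtt{L}^2(\R)$.) Kato's actual proof closes this gap with different machinery: he approximates $A(\cdot)$ by piecewise-constant families, forms the corresponding products-of-semigroups evolution operators, solves the approximate inhomogeneous problems, and passes to the limit using the uniform stability bounds and closedness of $A(t)$; nothing in your outline substitutes for that step.
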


\section{Local solution for the combustion problem}\label{combustion-local}
This section considers problem~\eqref{combustion-problem} for an unknown 
$u = (u_1,$ $\ldots, u_n) : \Omega_T \rightarrow \R^n $, where $\Omega_T = \R \times [0, T]$ for 
any fixed $T > 0$. We also consider the operator $L_i(t)$, defined in \eqref{operatorLi}, with domain in $C_0^{\infty}(\R)$; that is,
\begin{equation} \label{operatorLi-2}
L_i(t)\,v= -\alpha_i(t)\,v_{xx} + \beta_i(t)\, v_x, \,\,v \in C_0^\infty(\R), \,\, i = 1, \ldots, n,
\end{equation}
where $\alpha_i$ and $\beta_i$ are defined as in \eqref{alpha-beta} with the functions
$a_i, \,\,b_i, \,\,c_i, \,\,\lambda_i: \, \R \rightarrow \R$, and $y_i : \Omega_T \rightarrow \R$
all given. The function $d_i : \, \R \rightarrow \R$ is also given. 

Here, we propose the following hypotheses:
\begin{itemize}\label{Hy6Hy7}
\item[] {\bf(Hy6):} The functions $a_i$, $b_i$, $d_i$, $\lambda_i$, are twice differentiable, 
	and $c_i$ is three times differentiable and satisfy
	\begin{itemize}
		\item [(i)]  $k_1 \leq a_i(x), \, \lambda_i(x) \leq k_2$,  $0 \leq b_i(x),\, c_i(x) \leq k_2$, where $k_1 < k_2$ are positive constants. 
		\item [(ii)]  $a_i^{(k)}$, $b_i^{(k)}$, $d_i^{(k)}$, $\lambda_i^{(k)}$ for $k= 0, \, 1, \, 2$, and $c_i^{(k)}$ for $k = 0, \, 1, \,2, \, 3$ are in $\mathtt{L}^\infty(\R)$.
	\end{itemize}
	Note that, if $a_i$, $b_i$, $c_i$, $d_i$, $\lambda_i$ are constants, then all of these hypotheses are satisfied.
\item[]{\bf(Hy7):} The function $y_i$ is nonnegative and satisfies the following: 
\begin{itemize}
\item [(i)]	It is twice differentiable in $x$ and differentiable in $t$ for all 
$(x,\,t) \in \Omega_T$, $y_i$, ${(y_i)}_x$, ${(y_i)}_{xx}$, and $ {(y_i)}_t$ are in $\mathtt{L}^\infty(\Omega_T)$, and $\|y_i\|_{L^\infty(\Omega_T)} \leq k_3$, where $k_3$ is a positive constant.  
	\item [(ii)] $t \mapsto {(y_i)}_t$ is integrable on $[0,\,T]$ for all $x \in \R$.
	\item [(iii)] ${(y_i)}_t$ is twice differentiable in $x$, $ {(y_i)}_{tx} \in \mathtt{L}^\infty(\Omega_T)$ and $x \mapsto (y_i)_{txx} \in \mathtt{L}^2(\R)$ for all $t \in [0,\,T]$.
	\end{itemize}
\end{itemize}

\begin{lemma}\label{H1H2} 
If the operator $L_i(t)$ with the domain in $C_0^\infty(\R)$ satisfies
Hy6 and Hy7, it also satisfies Hy1 and Hy2 of \ref{scalar-local}. 
\end{lemma}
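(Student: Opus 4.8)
The plan is to regard (Hy1) and (Hy2) as the analytic conditions on the coefficients $\alpha_i,\beta_i$ of $L_i(t)$ under which the closure of $L_i(t)$ in $X=\mathtt{L}^2(\R)$, with $Y=\mathtt{H}^2(\R)$, forms a CD-system in the sense of Definition~\ref{CD}: namely uniform ellipticity of $\alpha_i$ together with $\mathtt{L}^\infty$-bounds on $\alpha_i,\beta_i$ and their spatial derivatives (accretivity/stability, (Hy1)), and the corresponding time-regularity of these coefficients (the condition (iii) of Definition~\ref{CD}, i.e. (Hy2)). Everything rests on one elementary observation that I would record first. By (Hy6)(i), $a_i(x)\ge k_1>0$ and $b_i(x)\ge 0$, while by (Hy7)(i), $0\le y_i(x,t)\le k_3$; hence the common denominator obeys the two-sided bound
\[
k_1 \;\le\; a_i(x)+b_i(x)\,y_i(x,t) \;\le\; k_2(1+k_3), \qquad (x,t)\in\Omega_T .
\]
Keeping the denominator away from zero is exactly what makes every quotient below, and every derivative of a quotient, bounded.

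From this bound I would read off the ellipticity and zeroth-order estimates at once: using $k_1\le\lambda_i\le k_2$ and $0\le c_i\le k_2$,
\[
\frac{k_1}{k_2(1+k_3)} \;\le\; \alpha_i(x,t) \;\le\; \frac{k_2}{k_1}, \qquad 0\;\le\;\beta_i(x,t)\;\le\;\frac{k_2}{k_1},
\]
so $\alpha_i$ is uniformly elliptic, say $\alpha_i\ge\alpha_0>0$, and $\alpha_i,\beta_i\in\mathtt{L}^\infty(\Omega_T)$. For the accretivity criterion (i) quoted before Definition~\ref{accretive}, an integration by parts for $v\in C_0^\infty(\R)$ gives
\[
\mathrm{Re}\,\langle L_i(t)v,\,v\rangle \;\ge\; -\tfrac{1}{2}\big(\|(\alpha_i)_{xx}\|_{\mathtt{L}^\infty(\Omega_T)}+\|(\beta_i)_x\|_{\mathtt{L}^\infty(\Omega_T)}\big)\,\|v\|_{\mathtt{L}^2(\R)}^2 ,
\]
while surjectivity of $(L_i(t)+\lambda)$ for large $\lambda$ (criterion (ii)) follows from standard elliptic solvability via Lax--Milgram, the ellipticity $\alpha_i\ge\alpha_0$ and the $\mathtt{L}^\infty$-bounds on the coefficients supplying coercivity and continuity of the associated form. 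Thus the closure is quasi m-accretive with domain $\mathtt{H}^2(\R)$, which is the content of (Hy1), provided the spatial derivatives appearing above lie in $\mathtt{L}^\infty$.

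That last provision is the heart of the spatial part: differentiating $\alpha_i=\lambda_i/(a_i+b_iy_i)$ and $\beta_i=c_i/(a_i+b_iy_i)$ by the quotient and product rules produces, for $\partial_x\alpha_i,\partial_{xx}\alpha_i,\partial_x\beta_i,\partial_{xx}\beta_i$, rational expressions whose numerators are finite sums of products of $\lambda_i,c_i,a_i,b_i,y_i$ and their $x$-derivatives up to second order, and whose denominators are powers $(a_i+b_iy_i)^m$ bounded below by $k_1^m>0$. Every numerator factor lies in $\mathtt{L}^\infty$: the coefficient derivatives by (Hy6)(ii) (and it is here that the extra smoothness of $c_i$ is consumed in controlling the first-order coefficient $\beta_i$), and $(y_i)_x,(y_i)_{xx}$ by (Hy7)(i). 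Hence all the required spatial derivatives of $\alpha_i,\beta_i$ belong to $\mathtt{L}^\infty(\Omega_T)$, completing (Hy1).

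Finally, for the time-regularity (Hy2) I would note that $\partial_t$ only reaches $y_i$, so
\[
\partial_t\alpha_i \;=\; -\,\frac{\lambda_i\,b_i\,(y_i)_t}{(a_i+b_iy_i)^2}, \qquad \partial_t\beta_i \;=\; -\,\frac{c_i\,b_i\,(y_i)_t}{(a_i+b_iy_i)^2},
\]
which are bounded in $\mathtt{L}^\infty(\Omega_T)$ by (Hy7)(i) and integrable in $t$ by (Hy7)(ii); this already yields $\partial_t A(t)\in B(\mathtt{H}^2(\R),\mathtt{L}^2(\R))$ for a.e.\ $t$ and the indefinite-integral representation of Definition~\ref{CD}(iii). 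I expect the genuine obstacle to be the bookkeeping for the mixed space--time derivatives in the precise norm that (Hy2) demands: computing $\partial_{tx}$ and $\partial_{txx}$ of these quotients produces terms carrying $(y_i)_{tx}$ and $(y_i)_{txx}$, which is exactly why (Hy7)(iii) postulates $(y_i)_{tx}\in\mathtt{L}^\infty(\Omega_T)$ and $(y_i)_{txx}\in\mathtt{L}^2(\R)$. Matching each generated term against the space prescribed by (Hy2) — $\mathtt{L}^\infty$ for the lower-order pieces and $\mathtt{L}^2$ for the top-order piece — is the only step requiring real care; once that tallying is carried out, (Hy2) follows and the lemma is proved.
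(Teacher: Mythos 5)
Your core verification is correct and follows essentially the paper's own proof: the two-sided denominator bound $k_1 \le a_i + b_i y_i \le k_2(1+k_3)$ yields Hy1 with $\mu_0 = k_1/\big(k_2(1+k_3)\big)$ and $\mu_1 = k_2/k_1$; the quotient-rule bookkeeping (numerators bounded via Hy6(ii) and Hy7(i), denominator powers bounded below by $k_1^m$) gives Hy2(i), which the paper dispatches with the single sentence ``Hy2(i) follows Hy6''; and your displayed formulas for $\partial_t\alpha_i$ and $\partial_t\beta_i$ are exactly the functions $\widetilde\alpha_i$, $\widetilde\beta_i$ that the paper introduces, with Hy7(i) supplying the uniform $\mathtt{L}^\infty$ bound and Hy7(ii) the $t$-integrability needed for the indefinite-integral representation in Hy2(ii).

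Two corrections to your framing, neither of which breaks the argument. First, Hy1 is nothing more than the uniform parabolicity bound $\mu_0 \le \alpha_i \le \mu_1$; the accretivity inequality, the Lax--Milgram surjectivity of $L_i(t)+\lambda$, and the conclusion that the closure is quasi m-accretive on $\mathtt{H}^2(\R)$ are \emph{not} the content of Hy1 --- in the paper those are the separate Lemmas~\ref{closure} and~\ref{acretivo}, which the present lemma merely feeds, so that material is superfluous here. Second, your closing paragraph defers a ``tallying'' of $\partial_{tx}$ and $\partial_{txx}$ terms as the step on which Hy2 supposedly hinges, but Hy2 makes no demand on mixed space--time derivatives: Hy2(ii) asks only that $\alpha_i$ and $\beta_i$ be indefinite $t$-integrals of functions $\widetilde\alpha_i,\widetilde\beta_i$ lying in $\mathtt{L}^\infty(\R)$ uniformly in $t$ and integrable on $[0,T]$, which your formulas already deliver; Hy7(iii) is not consumed in this lemma. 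Leaving that extra step ``to be carried out'' therefore does not create a gap --- it is simply not required --- and your proof is in fact complete once the $\partial_t$ formulas are established.
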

\begin{proof}
According to Hy7 (i), we have  $\|y_i\|_{\mathtt{L}^\infty(\Omega_T)} < k_3$, which implies that
	\begin{equation}\label{unifor}
	\dfrac{k_1}{k_2\,(1 + k_3)} < \alpha_i(x,\,t) <  \dfrac{k_2}{k_1},\,\,\mbox{for all}\,\,\, (x,\,t)\in \Omega_T.
	\end{equation}
	Thus, Hy1 is satisfied with $\mu_0 =  \dfrac{k_1}{k_2\,(1 + k_3)}$ and $\mu_1 =  \dfrac{k_2}{k_1}$.
	Hy2 (i) follows Hy6. To prove Hy2 (ii), we define
	\begin{equation*} 
	\widetilde{\alpha}_i(x,\,t) = \dfrac{-\lambda_i(x)\,b_i(x)\,(y_i)_t(x,\,t)}{\left(a_i(x)
		+ b_i(x)\,y_i(x,\,t)\right)^2}\;\;\mathrm{and}\;\;\widetilde{\beta}_i(x,\,t)
	= \dfrac{-c_i(x)\,b_i(x)\,(y_i)_t(x,\,t)}{\left(a_i(x) + b_i(x)\,y_i(x,\,t)\right)^2}.
	\end{equation*}
	Thus,
	\begin{eqnarray*}
		\alpha_i(x,\,t) & = & \alpha_i(x,\,0) + \int_0^t\,\widetilde{\alpha}_i(x,\,s)\,\mathrm{ds},  \\
		\beta_i(x,\,t)   &=& \beta_i(x,\,0) + \int_0^t\,\widetilde{\beta}_i(x,\,s)\,\mathrm{ds}.
	\end{eqnarray*}
	Therefore, Hy2 (ii) follows Hy7 (ii), completing the proof.
\end{proof}

The $L_i(t)$ operator can be extended to $\mathtt{H}^2(\R)$ according to the following lemma. 

\begin{lemma}\label{propagadorfront}
	Let Hy6 and Hy7 be satisfied.
	Then, for each $t\in [0,\,T]$ the $L_i(t)$ operator defined in $C_0^{\infty}(\R)$ is closable in $\mathtt{L}^2(\R)$.  
	and its closure, denoted by $A_i(t)$, has a domain independent of $t$ with $D\left(A_i(t)\right)= \mathtt{H}^2(\R)$. Furthermore, 
	${A_i(t)}$ is quasi m-accretive in $\mathtt{L}^2(\R)$ for each $t\in [0,\,T]$.
	That is, $A_i(t) \in G\left(\mathtt{L}^2(\R),\,1,\,\beta\right)$.
\end{lemma}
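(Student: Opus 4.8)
The plan is to realize $A_i(t)$ as the $\mathtt{H}^2$-closure of $L_i(t)$ and to verify the two conditions characterizing $\mathbb{G}\big(\mathtt{L}^2(\R),1,\beta\big)$ recalled just before Definition~\ref{CD}: the quasi-accretivity estimate (i) and the surjectivity (ii). Since Hy6 and Hy7 give, through Lemma~\ref{H1H2}, the uniform ellipticity $\mu_0\le\alpha_i\le\mu_1$ together with $\mathtt{L}^\infty$-bounds on $\alpha_i,\beta_i$ and their relevant derivatives, the map $L_i(t)\colon \mathtt{H}^2(\R)\to \mathtt{L}^2(\R)$, $v\mapsto-\alpha_i v_{xx}+\beta_i v_x$, is bounded, with norm controlled uniformly in $t$. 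Because $C_0^\infty(\R)$ is dense in $\mathtt{H}^2(\R)$ and $L_i(t)$ is $\mathtt{H}^2$-to-$\mathtt{L}^2$ continuous, the graph closure of $L_i(t)\big|_{C_0^\infty(\R)}$ contains $\mathtt{H}^2(\R)$ and agrees there with this bounded extension; the remaining work is to show the closure is exactly $\mathtt{H}^2(\R)$ (hence $t$-independent) and quasi m-accretive.

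First I would establish the accretivity estimate on $C_0^\infty(\R)$. Integrating by parts twice and taking real parts gives, for $v\in C_0^\infty(\R)$,
\[
\mathrm{Re}\,\langle L_i(t)v,\,v\rangle \;=\; \int_{\R}\alpha_i\,|v_x|^2\,dx \;-\;\tfrac12\int_{\R}(\alpha_i)_{xx}\,|v|^2\,dx \;-\;\tfrac12\int_{\R}(\beta_i)_x\,|v|^2\,dx .
\]
Using $\alpha_i\ge\mu_0>0$ and the $\mathtt{L}^\infty$-bounds on $(\alpha_i)_{xx}$ and $(\beta_i)_x$, this yields $\mathrm{Re}\,\langle L_i(t)v,v\rangle\ge\mu_0\|v_x\|^2-\beta\|v\|^2\ge-\beta\|v\|^2$ with $\beta=\tfrac12\big(\|(\alpha_i)_{xx}\|_\infty+\|(\beta_i)_x\|_\infty\big)$, which is condition (i). A densely defined quasi-accretive operator is closable, so $L_i(t)\big|_{C_0^\infty(\R)}$ has a closure $A_i(t)$, and by density the same estimate persists for all $v\in\mathtt{H}^2(\R)$.

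The main obstacle is condition (ii): given $\lambda>\beta$ and $g\in\mathtt{L}^2(\R)$, I must solve $(A_i(t)+\lambda)v=g$ with $v\in\mathtt{H}^2(\R)$. I would attach to $A_i(t)+\lambda$ its bilinear form on $\mathtt{H}^1(\R)$, which after absorbing the first-order term $\int\big((\alpha_i)_x+\beta_i\big)\,v_x\,\bar v$ into $\mu_0\|v_x\|^2$ via Young's inequality is bounded and, for $\lambda$ large, coercive on $\mathtt{H}^1(\R)$; Lax--Milgram then produces a unique weak solution $v\in\mathtt{H}^1(\R)$. The one-dimensional elliptic regularity step upgrades this: the weak equation reads $(\alpha_i v_x)_x=(\alpha_i)_x v_x+\beta_i v_x+\lambda v-g\in\mathtt{L}^2(\R)$, so $\alpha_i v_x\in\mathtt{H}^1(\R)$, and since $\alpha_i$ is bounded below and Lipschitz (Hy6) this forces $v_x\in\mathtt{H}^1(\R)$, i.e.\ $v\in\mathtt{H}^2(\R)$. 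Hence $A_i(t)+\lambda$ maps $\mathtt{H}^2(\R)$ onto $\mathtt{L}^2(\R)$, giving (ii); the resulting a priori bound $\|u\|_{\mathtt{H}^2}\le C\|(A_i(t)+\lambda)u\|_{\mathtt{L}^2}$ shows that any $\mathtt{L}^2$-limit of $C_0^\infty$ functions whose images converge already lies in $\mathtt{H}^2(\R)$, so $D(A_i(t))=\mathtt{H}^2(\R)$ independently of $t$. Conditions (i)--(ii) then give $A_i(t)\in\mathbb{G}\big(\mathtt{L}^2(\R),1,\beta\big)$.

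Finally, I would remark that the whole statement can be obtained more economically by invoking the general second-order scalar theory of Subsection~\ref{scalar-local}: once Lemma~\ref{H1H2} verifies Hy1 and Hy2, the conclusions (closability, $\mathtt{H}^2$-domain, quasi m-accretivity) are exactly those guaranteed there, so only the direct accretivity computation and the Lax--Milgram/regularity solvability above need checking for the specific coefficients $\alpha_i,\beta_i$.
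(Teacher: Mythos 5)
Your proposal is correct, and apart from the accretivity computation (which coincides with the paper's, yielding the same constant $\beta=\tfrac12\big(\|(\alpha_i)_{xx}\|_{\infty}+\|(\beta_i)_x\|_{\infty}\big)$) it follows a genuinely different route. The paper's proof of this lemma is itself a pure citation — Lemma~\ref{H1H2} verifies Hy1--Hy2 for $\alpha_i,\beta_i$, and the scalar Lemmas~\ref{closure} and~\ref{acretivo} of the appendix then deliver everything — which is exactly the shortcut you identify in your closing remark; the substantive comparison is therefore with those two appendix proofs. For closability and the domain identification, the paper splits $L(t)=B_0(t)+B_1(t)$ with $B_0=-a\,\partial_x^2$ and $B_1=b\,\partial_x$, proves via the multiplier estimate $\big\|\partial_x\left(-\mu_0\partial_x^2+\lambda\right)^{-1}\big\|\le \big(2\sqrt{\mu_0}\,\lambda^{1/2}\big)^{-1}$ that $A_1$ is $A_0$-bounded with relative bound $<1$, and invokes Kato's perturbation theorem to conclude the sum is closed with domain $\mathtt{H}^2(\R)$; you instead obtain closability from the standard fact that a densely defined quasi-accretive operator on a Hilbert space is closable, and recover $D(A_i(t))=\mathtt{H}^2(\R)$ from the a priori bound $\|u\|_{\mathtt{H}^2(\R)}\le C\,\|(A_i(t)+\lambda)u\|_{\mathtt{L}^2(\R)}$, which does follow as you indicate from coercivity plus $\mu_0\|u_{xx}\|_{\mathtt{L}^2(\R)}\le\|\alpha_i u_{xx}\|_{\mathtt{L}^2(\R)}$. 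For the range condition, the paper argues abstractly — accretivity plus closedness gives closed range, and $R\big(A(t)+\lambda\big)^{\bot}=\mathrm{Ker}\big(A^*(t)+\lambda\big)=\{0\}$ is shown by testing the adjoint equation — whereas you solve $(A_i(t)+\lambda)v=g$ constructively by Lax--Milgram on the $\mathtt{H}^1(\R)$ form followed by one-dimensional elliptic regularity ($\alpha_i v_x\in\mathtt{H}^1(\R)$, then divide by $\alpha_i$, legitimate since $\alpha_i\ge\mu_0$ with $(\alpha_i)_x$ bounded by Hy6/Hy7 via Lemma~\ref{H1H2}). One point you handle correctly but should keep explicit: Lax--Milgram coercivity requires $\lambda$ large, possibly exceeding $\beta$, which is harmless because ontoness for \emph{some} $\lambda>\beta$ implies it for all, as the paper itself records before Definition~\ref{CD}. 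As for what each route buys: the paper's perturbation/adjoint argument stays inside abstract operator theory and transfers verbatim to the global-time versions (Lemmas~\ref{closureG} and~\ref{acretivoG}), while your variational argument is more elementary and self-contained, produces the quantitative resolvent estimate directly (which is precisely what identifies the domain), and would adapt more easily to divergence-form or rougher coefficients — at the price of a regularity bootstrap that works this cleanly only in one space dimension.
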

\begin{proof}
The proof is a consequence of Lemmas ~\ref{H1H2}, ~\ref{closure}, and ~\ref{acretivo}.
\end{proof}
Substituting $L_i(t) $ by its closure, the problem~\eqref{combustion-problem} can be written as follows:
\begin{equation}\label{combustion-problem2}
\left\{
\begin{array}{l}
(u_i)_t + A_i(t)u_i = f_i(t, u), \,\,\, 0 < t \leq T , \,\,\, i = 1,\ldots, n, \\
u_i(0) = \phi_i,\\
u = (u_1, \ldots, u_n),
\end{array}
\right.
\end{equation}
where
\begin{equation}\label{operatorAi}
A_i(t)\psi= -\alpha_i(t)\,\psi^{\prime\prime} + \beta_i(t)\,\psi^\prime,\;\;\psi \in D\left({A_i}(t)\right)= \mathtt{H}^2(\R),
\end{equation}
with $\alpha_i$ and $\beta_i$ as defined in \eqref{alpha-beta}.
Here, $\psi^{\prime\prime}$ and $\psi^\prime$ are distributions.

The next lemma states the existence of a family of semigroup evolution operators associated with 
problem~\eqref{combustion-problem2}. Following \cite{Kato6}, the proof uses the 
Lemma~\ref{propagadorfront}. 
Because it is similar to the proof given in Lemma~\ref{propagador}, we omit it here.

\begin{lemma}\label{propagador-combustao}
	We assume that Hy6 and Hy7 are satisfied. A unique family of semigroup 
	evolution operator $U_i(t, t')$ associated with $A_i$ exists, defined in a triangular domain as follows:
	$$
	(t,\,t^\prime) \in \triangle = \left\{(t,\,t^\prime) \in \R^2 \colon 0 \leq t^\prime \leq t \leq.
	T\right\}\longmapsto U_i(t,\,t^\prime) \in \mathcal{B}\left(\mathtt{L}^2(\R)\right).
	$$
that satisfy properties (i)-–(v), as in Lemma~\ref{propagador}.
\end{lemma}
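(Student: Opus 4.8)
The plan is to identify the family $\{A_i(t) : t \in [0,T]\}$ with the negative generators of a CD-\emph{system} $\{\mathcal{A}_i;\,\mathtt{L}^2(\R),\,\mathtt{H}^2(\R)\}$ in the sense of Definition~\ref{CD}, taking $X = \mathtt{L}^2(\R)$ and $Y = \mathtt{H}^2(\R)$, and then to invoke Kato's construction in Lemma~\ref{evol-operator} to produce the evolution operator $U_i(t,t')$ on the triangle $\triangle$ together with its properties. The bulk of the work is therefore to verify the three conditions of Definition~\ref{CD}; once these hold, properties (i)--(v) in the style of Lemma~\ref{propagador} and the uniqueness of the family follow essentially for free from the abstract theory.

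First I would dispatch the two easy conditions. For stability (condition (i)), Lemma~\ref{propagadorfront} already provides that each $A_i(t)$ is quasi m-accretive, that is, $A_i(t) \in \mathbb{G}(\mathtt{L}^2(\R),1,\beta)$ with a $\beta$ that can be taken uniform in $t$; the Hilbert-space criterion stated just before Definition~\ref{CD} then upgrades this to stability of the whole family with constants $M=1$ and $\beta$. Condition (ii) is immediate, since Lemma~\ref{propagadorfront} gives $D(A_i(t)) = \mathtt{H}^2(\R)$ independently of $t$, and $\mathtt{H}^2(\R)$ is continuously and densely embedded in $\mathtt{L}^2(\R)$.

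The substantive step is condition (iii), the $\mathtt{L}^\infty_*$-regularity in $t$ of the map $t \mapsto A_i(t)$ viewed in $\mathcal{B}(\mathtt{H}^2(\R),\mathtt{L}^2(\R))$. Here I would reuse the integral representation already assembled in the proof of Lemma~\ref{H1H2}: defining the candidate strong derivative by $G_i(t)\psi := -\widetilde\alpha_i(t)\,\psi'' + \widetilde\beta_i(t)\,\psi'$, one has $A_i(t)\psi = A_i(0)\psi + \int_0^t G_i(s)\psi\,ds$ for every $\psi \in \mathtt{H}^2(\R)$. What remains is to check that $G_i(t) \in \mathcal{B}(\mathtt{H}^2(\R),\mathtt{L}^2(\R))$, that $s \mapsto G_i(s)\psi$ is strongly measurable, and that $\|G_i(t)\|_{\mathcal{B}(\mathtt{H}^2,\mathtt{L}^2)}$ is essentially bounded on $[0,T]$. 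This is the main obstacle, because it is precisely where the pointwise-in-$(x,t)$ hypotheses on the fuel concentration must be converted into operator-valued bounds: I would estimate $\|\widetilde\alpha_i(t)\,\psi''\|_{\mathtt{L}^2}$ and $\|\widetilde\beta_i(t)\,\psi'\|_{\mathtt{L}^2}$ by $\|\psi\|_{\mathtt{H}^2}$, using the uniform lower bound on the denominator $a_i + b_i y_i$ from Hy6(i) and the $\mathtt{L}^\infty(\Omega_T)$ control of $y_i$ and $(y_i)_t$ from Hy7(i), with measurability supplied by Hy7(ii).

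With the three conditions in hand, $\{\mathcal{A}_i;\,\mathtt{L}^2(\R),\,\mathtt{H}^2(\R)\}$ is a CD-\emph{system}, and Lemma~\ref{evol-operator} yields $U_i(t,t') \in \mathcal{B}(\mathtt{L}^2(\R))$ on $\triangle$ that is strongly continuous on both $\mathtt{L}^2(\R)$ and $\mathtt{H}^2(\R)$, satisfies the propagation identities $U_i(t,s)U_i(s,r)=U_i(t,r)$ and $U_i(s,s)=I$, and obeys $\partial_t U_i(t,s)\psi = -A_i(t)U_i(t,s)\psi$ together with $\partial_s U_i(t,s)\psi = U_i(t,s)A_i(s)\psi$ for $\psi \in \mathtt{H}^2(\R)$; the stability constants moreover give the uniform bound $\|U_i(t,s)\|_{\mathcal{B}(\mathtt{L}^2)} \leq e^{\beta(t-s)}$. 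These are exactly the properties (i)--(v) claimed by analogy with Lemma~\ref{propagador}. Finally, uniqueness of the family follows from the uniqueness part of Kato's theory (cf.\ Theorem~\ref{TeoKato1}): two evolution operators for the same stable family must agree on the dense subspace $\mathtt{H}^2(\R)$ and hence everywhere on $\mathtt{L}^2(\R)$.
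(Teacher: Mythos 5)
Your proposal is correct and follows essentially the same route as the paper: the paper omits the proof precisely because it mirrors that of Lemma~\ref{propagador}, which verifies that $\{A(t);\,\mathtt{L}^2(\R),\,\mathtt{H}^2(\R)\}$ is a CD-system --- stability and domain constancy from the closure/quasi m-accretivity lemmas (here Lemma~\ref{propagadorfront}), and condition (iii) via the strong derivative $G(t)\phi=-\widetilde{\alpha}(t)\phi''+\widetilde{\beta}(t)\phi'$ built from the integral representation of the coefficients --- before invoking Kato's construction in Lemma~\ref{evol-operator}. Your identification of the $\mathtt{L}^\infty_*$-regularity of $t\mapsto A_i(t)$ as the substantive step, with the bounds coming from the lower bound $a_i+b_iy_i\geq k_1$ of Hy6(i) and the $\mathtt{L}^\infty(\Omega_T)$ control of $(y_i)_t$ from Hy7, matches the paper's intent exactly.
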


Note that, if $f_i : \Omega_T \rightarrow \R$ is a known function that does not depend on $u$, then problem \eqref{combustion-problem2} became linear as
\begin{equation}\label{linear-problem}
\left\{
\begin{array}{l}
(u_i)_t + A_i(t)u_i = f_i(t), \,\,\,t >0,  \,\,\,  i = 1,\ldots, n,  \\
u_1(0) = \phi_i.
\end{array}
\right.
\end{equation}
Problem \eqref{linear-problem} is decoupled. Thus, the following theorem is a consequence of 
Theorem~\ref{TeoKato1} and Lemma \ref{propagadorfront}, see \cite{Kato6}:

\begin{theorem}\label{frontlinear} Let the same hypotheses as in Lemma \ref{propagadorfront} be satisfied.
	If $\phi_i \in \mathtt{H}^2(\R)$ and $f_i \in \mathrm{Lip}([0,\,T],\,\mathtt{L}^2(\R))$, for $i = 1, \ldots, n$, then the initial value problem
	(\ref{linear-problem}) has a unique solution:
	$$u = (u_1,\ldots, u_n) \in C\left([0,\,T],\,\mathtt{H}^2(\R)^n \right)
	\cap C^1\left([0,\,T],\,\mathtt{L}^2(\R)^n \right).$$
\end{theorem}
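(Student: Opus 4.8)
The plan is to exploit the decoupled structure of \eqref{linear-problem}: since $f_i$ does not depend on $u$, the system splits into $n$ independent scalar Cauchy problems
\begin{equation*}
(u_i)_t + A_i(t)u_i = f_i(t), \qquad u_i(0) = \phi_i, \qquad i = 1, \ldots, n,
\end{equation*}
each of which can be solved in isolation; the vector solution is then recovered componentwise. Thus it suffices to produce, for each fixed $i$, a unique $u_i \in C([0,T],\,\mathtt{H}^2(\R)) \cap C^1([0,T],\,\mathtt{L}^2(\R))$ and to invoke the max-norm on the product spaces to conclude that $u = (u_1, \ldots, u_n)$ lies in the asserted space.

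First I would verify that, for each $i$, the triplet $\{A_i;\,\mathtt{L}^2(\R),\,\mathtt{H}^2(\R)\}$ is a CD-\emph{system} in the sense of Definition~\ref{CD}, with $X = \mathtt{L}^2(\R)$ and $Y = \mathtt{H}^2(\R)$. Condition (i), stability of the family $\{A_i(t) : t \in [0,T]\}$ of negative generators of $C_0$-semigroups, follows directly from Lemma~\ref{propagadorfront}, which gives $A_i(t) \in \mathbb{G}(\mathtt{L}^2(\R),\,1,\,\beta)$ for each $t$; quasi m-accretivity then yields stability with constants $M=1$ and $\beta$ by the lemma following Definition~\ref{accretive}. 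Condition (ii), the constancy of the domain $D(A_i(t)) = \mathtt{H}^2(\R)$ together with its continuous dense embedding in $\mathtt{L}^2(\R)$, is likewise part of the conclusion of Lemma~\ref{propagadorfront}.

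For condition (iii) I would use the integral representation established in the proof of Lemma~\ref{H1H2}: writing $\alpha_i(\cdot,t) = \alpha_i(\cdot,0) + \int_0^t \widetilde{\alpha}_i(\cdot,s)\,ds$ and $\beta_i(\cdot,t) = \beta_i(\cdot,0) + \int_0^t \widetilde{\beta}_i(\cdot,s)\,ds$ with the explicit kernels $\widetilde{\alpha}_i,\widetilde{\beta}_i$ given there, the generator $A_i(t)\psi = -\alpha_i(t)\psi'' + \beta_i(t)\psi'$ inherits a strong derivative $G_i(t)\psi = -\widetilde{\alpha}_i(t)\psi'' + \widetilde{\beta}_i(t)\psi'$, so that $A_i(t)y = A_i(0)y + \int_0^t G_i(s)y\,ds$ for every $y \in \mathtt{H}^2(\R)$. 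Here Hy6 and Hy7 guarantee that $\widetilde{\alpha}_i,\widetilde{\beta}_i \in \mathtt{L}^\infty(\Omega_T)$, since the denominators are bounded below by \eqref{unifor} and $(y_i)_t$ is essentially bounded; this makes each $G_i(t) \in \mathcal{B}(\mathtt{H}^2(\R),\,\mathtt{L}^2(\R))$ strongly measurable with essentially bounded operator norm, so that $A_i \in \mathtt{L}^\infty_*([0,T],\,\mathcal{B}(\mathtt{H}^2(\R),\,\mathtt{L}^2(\R)))$, completing the CD-\emph{system}.

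With the CD-\emph{system} in hand the proof concludes by applying Theorem~\ref{TeoKato1} to each scalar problem: since $\phi_i \in \mathtt{H}^2(\R) = Y$ and $f_i \in \mathrm{Lip}([0,T],\,\mathtt{L}^2(\R)) = \mathrm{Lip}(I, X)$, it furnishes a unique $u_i \in C([0,T],\,\mathtt{H}^2(\R)) \cap C^1([0,T],\,\mathtt{L}^2(\R))$ given by $u_i(t) = U_i(t,0)\phi_i + \int_0^t U_i(t,\tau)f_i(\tau)\,d\tau$, where $U_i$ is the evolution operator of Lemma~\ref{propagador-combustao}. Collecting the components and using the product (max) norm yields the stated regularity of $u$, while uniqueness of the vector follows from the componentwise uniqueness. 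I expect the only nonroutine step to be the verification of condition (iii) of the CD-\emph{system} — the strong measurability and essential boundedness in $t$ of $\partial_t A_i$ as an operator from $\mathtt{H}^2(\R)$ into $\mathtt{L}^2(\R)$ — which rests entirely on the bounds supplied by Hy6 and Hy7; the remaining steps are immediate applications of the cited results.
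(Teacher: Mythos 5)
Your proposal is correct and follows essentially the same route as the paper, which presents this theorem precisely as a consequence of the decoupled structure, Lemma~\ref{propagadorfront}, and Kato's Theorem~\ref{TeoKato1} applied componentwise. Your verification of CD-system condition (iii) via the integral representation $A_i(t)y = A_i(0)y + \int_0^t G_i(s)y\,ds$ with $G_i(t)\psi = -\widetilde{\alpha}_i(t)\psi'' + \widetilde{\beta}_i(t)\psi'$ is exactly the argument the paper carries out in Lemmas~\ref{H1H2} and~\ref{propagador}, so you have merely made explicit the details the paper leaves to its cited lemmas.
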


Assuming Hy6 and Hy7, we can define the following positive constants, which are used in the next lemma.
\begin{align}
&R_i:=\max_{k=0,1,2} \Big\{\|a_i^{(k)}\|_{\mathtt{L}^\infty(\R)},\,\, \|b_i^{(k)}\|_{\mathtt{L}^\infty(\R)},\,\, \|c_i\|_{\mathtt{L}^\infty(\R)},\,\, \|c_i^{(k+1)}\|_{\mathtt{L}^\infty(\R)},\,\, \|d_i^{(k)}\|_{\mathtt{L}^\infty(\R)},\nonumber\\ &\phantom{-------}\|\partial_x^{k}y_i\|_{\mathtt{L}^\infty(\Omega_T)},\,\,\|g^{(k)}\|_{\mathtt{L}^\infty(\R)} \Big\}, \label{Ri} \\
&\tilde R:=\max \{R_1, \ldots, R_n\}. \phantom{------} \label{R}
\end{align}

To prove Theorem~\ref{combustion-local-theo}, the source function must satisfy the same properties of Hy3 as in \ref{scalar-local}, with the spaces $\mathtt{L}^2(\R)$ and $\mathtt{H}^2(\R)$ substituted with $\mathtt{L}^2(\R)^n$ and $\mathtt{H}^2(\R)^n$, respectively. 
This is performed in the following lemma:

\begin{lemma} \label{properties-fi} 
	Assume that Hy6 and Hy7 are satisfied and $W \subset \mathtt{H}^2(\R)^n$ is any open ball centered at the origin. Then, the source function $f = (f_1, \ldots, f_n):
	[0, T] \times W \to \mathtt{H}^2(\R)^n$, for any fixed $T >0$,  satisfies the following properties: 
	\begin{itemize} 
		\item[(i)]  $\|f(t, \,w)\|_{\mathtt{H}^2(\R)^n} \leq \mu $, for all $t \in [0,\,T]$,  and 
		$w = (w_1, \ldots, w_n) \in W$,
		where $\mu$ is a constant depending on $k_1$ given in $Hy6(i)$, $\tilde R$ defined in \eqref{R}, and the radius $\rho$ of the ball $W$.
		\item[(ii)] For each $w \in W$, the function $t \mapsto f(t, \,w)$ is continuous on $[0, T]$ to $\mathtt{L}^2(\R)^n$.
		\item[(iii)] For each $t \in [0,\,T]$, the function $w \mapsto f(t, \,w)$ is Lipschitz continuous
		in $\mathtt{L}^2(\R)^n$; that is,
		$$
		\| f(t, \,w) - f(t, \,v)\|_{\mathtt{L}^2(\R)^n} \leq \kappa\, \| w - v\|_{\mathtt{L}^2(\R)^n}, 
		$$ 
		for all  $w = (w_1, \ldots, w_n)\,\,\,\mbox{and}\,\,\, v = (v_1, \ldots, v_n) \in W$, where the Lipschitz constant $\kappa$ does not depend on $t$.
	\end{itemize}
\end{lemma}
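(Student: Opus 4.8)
The plan is to prove the three assertions separately, in each case reducing to elementary product, quotient, and composition estimates in which the coefficient functions enter only through $\mathtt{L}^\infty$-bounds on themselves and their $x$-derivatives (all controlled by $\tilde R$), while the argument $w$ enters through the one-dimensional Sobolev embedding $\mathtt{H}^2(\R) \hookrightarrow C^1_0(\R)$, which yields $\|w_i\|_{\mathtt{L}^\infty(\R)} + \|(w_i)_x\|_{\mathtt{L}^\infty(\R)} \le C\|w_i\|_{\mathtt{H}^2(\R)} \le C\rho$. The first preliminary step is to record that the common denominator $D_i := a_i + b_i\,y_i$ satisfies $D_i \ge k_1 > 0$ uniformly on $\Omega_T$ by Hy6(i) and Hy7(i), and that $1/D_i$, $\partial_x(1/D_i)$ and $\partial_{xx}(1/D_i)$ all lie in $\mathtt{L}^\infty(\Omega_T)$ with bounds depending only on $k_1$ and $\tilde R$, since their numerators are polynomials in the bounded quantities $a_i^{(k)}$, $b_i^{(k)}$ and $\partial_x^k y_i$. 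Hence each factor $1/D_i$ may be treated as a multiplier that is bounded together with its first two derivatives.

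For (i) I would split each $f_i$ into its finitely many summands and estimate each with the elementary multiplier bound: if $m, m', m'' \in \mathtt{L}^\infty(\R)$ and $h \in \mathtt{H}^2(\R)$, then $\|mh\|_{\mathtt{H}^2(\R)} \le C(\|m\|_{\mathtt{L}^\infty} + \|m'\|_{\mathtt{L}^\infty} + \|m''\|_{\mathtt{L}^\infty})\|h\|_{\mathtt{H}^2(\R)}$. The linear summands, e.g.\ $-(c_i)_x w_i/D_i$ and $q_i(w_{i+1}-w_i)/D_i$, are of this form with $h = w_j$; observe that the appearance of $(c_i)_x$ already at zeroth order is exactly what forces Hy6 to demand three derivatives of $c_i$, since $\partial_{xx}$ of such a summand requires two derivatives of $(c_i)_x$. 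The nonlinear summand is treated by first showing $g(w_i), w_i g(w_i) \in \mathtt{H}^2(\R)$: since $g(0)=0$ with $g', g''$ bounded, the chain rule gives $\partial_x g(w_i) = g'(w_i)(w_i)_x$ and $\partial_{xx}g(w_i) = g''(w_i)(w_i)_x^2 + g'(w_i)(w_i)_{xx}$, both in $\mathtt{L}^2(\R)$ because $(w_i)_x \in \mathtt{L}^2\cap\mathtt{L}^\infty$ and $(w_i)_{xx}\in\mathtt{L}^2$; multiplying by the bounded multipliers $K_i b_i$, $d_i$, $y_i$ and $1/D_i$ and summing over all terms yields $\|f(t,w)\|_{\mathtt{H}^2(\R)^n}\le\mu$ with $\mu=\mu(k_1,\tilde R,\rho)$.

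For (ii) the observation is that the only $t$-dependence of $f_i$ enters through $y_i(\cdot,t)$, both explicitly and through $D_i$; by Hy7 the map $t\mapsto y_i(\cdot,t)$ is continuous into $\mathtt{L}^\infty(\R)$ (its $t$-derivative being $(y_i)_t\in\mathtt{L}^\infty(\Omega_T)$), hence so is $t\mapsto 1/D_i(\cdot,t)$, and writing $f_i(t,w)-f_i(s,w)$ as a sum of coefficient differences tending to $0$ in $\mathtt{L}^\infty$ times fixed $\mathtt{L}^2(\R)$ functions of $w$ gives continuity into $\mathtt{L}^2(\R)^n$. For (iii) only an $\mathtt{L}^2$-estimate of $f_i(t,w)-f_i(t,v)$ is needed; the linear summands are Lipschitz with constants of size $\tilde R/k_1$, while for the nonlinear summand I would use that $g$ is globally Lipschitz and that $\theta\mapsto\theta g(\theta)$ is Lipschitz on the bounded interval $\{|\theta|\le C\rho\}$ containing the ranges of $w_i$ and $v_i$, giving $\|g(w_i)-g(v_i)\|_{\mathtt{L}^2}\le\|g'\|_{\mathtt{L}^\infty}\|w_i-v_i\|_{\mathtt{L}^2}$ and the analogous bound for $w_ig(w_i)-v_ig(v_i)$; multiplying by bounded multipliers produces a $t$-independent constant $\kappa$.

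The main obstacle is assertion (i): it is the only part requiring the full differentiability strength of Hy6--Hy7, and the delicate point is to carry out the bookkeeping of which derivative order of each coefficient survives in $\partial_{xx}f_i$ --- in particular, verifying that the reaction term $u_i g(u_i)$ genuinely belongs to $\mathtt{H}^2(\R)$ rather than merely $\mathtt{L}^2(\R)$, which is precisely what makes the boundedness of $g''$ (recorded in the definition of $R_i$) indispensable. Parts (ii) and (iii), by contrast, only involve $\mathtt{L}^2$-norms and are routine once the uniform lower bound on $D_i$ and the $\mathtt{L}^\infty$-bounds on $w$ are in hand.
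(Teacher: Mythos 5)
Your proposal is correct, and for parts (i) and (ii) it is essentially the paper's argument in a cleaner packaging: where you invoke a multiplier lemma ($\|mh\|_{\mathtt{H}^2(\R)} \le C(\|m\|_{\mathtt{L}^\infty}+\|m'\|_{\mathtt{L}^\infty}+\|m''\|_{\mathtt{L}^\infty})\|h\|_{\mathtt{H}^2(\R)}$ with $m$ built from $1/D_i$ and the coefficients), the paper writes out $f_1=l_1+\cdots+l_5$ and expands $\partial_x^2 f_1$ into twenty explicit terms $h_1,\ldots,h_{20}$ in \eqref{f1xx}, bounding each by the same ingredients you identify --- the uniform lower bound $D_i\ge k_1$, the $\mathtt{L}^\infty$-bounds collected in $\tilde R$, the Sobolev embedding, and the fact $g(0)=0$ with $g',g''$ bounded (the paper's $\|l_1\|_{\mathtt{L}^2}$ estimate uses exactly your mean-value trick $|g(z)|\le\|g'\|_{\mathtt{L}^\infty}|z|$); your remark about why $c_i$ needs three derivatives matches the paper's inclusion of $\|c_i^{(k+1)}\|_{\mathtt{L}^\infty}$ in \eqref{Ri}. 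Your (ii) is in fact more detailed than the paper's, which disposes of it in one sentence via continuity of $y_i$. The genuine divergence is in (iii): the paper parametrizes the segment $\gamma(\sigma)=(1-\sigma)v+\sigma w$ (using convexity of the ball $W$), computes the partial derivatives $\partial_{w_j}f_i$ explicitly --- which makes the inter-layer coupling through the Kronecker deltas and the constants $q_i,\bar q_1,\bar q_2$ transparent --- bounds $|\nabla_w f_i(\gamma(\sigma))|$ pointwise via Sobolev embedding, and applies the mean value theorem to $F_i(\sigma)=f_i(t,\gamma(\sigma))$ to get a pointwise bound $|f_i(t,w)-f_i(t,v)|\le \mathrm{const}\,|w-v|$ before taking $\mathtt{L}^2$-norms; you instead split each difference algebraically, using that $g$ is globally Lipschitz and that $\theta\mapsto\theta g(\theta)$ is Lipschitz on $\{|\theta|\le C\rho\}$ (e.g.\ $wg(w)-vg(v)=(w-v)g(w)+v\,(g(w)-g(v))$). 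Both routes produce a $t$-independent $\kappa=\kappa(k_1,\tilde R,\rho)$; yours avoids differentiating $f$ in $w$ and is slightly more elementary, while the paper's gradient computation automates the bookkeeping uniformly over the $n$ components. One caveat applying equally to both proofs: the heat-loss summands $\overline q_1(w_1-u_e)/D_1$ and $\overline q_2(w_n-u_e)/D_n$ contain the constant $u_e$, and $\delta_2\,u_e\notin\mathtt{L}^2(\R)$ unless $\overline q_1 u_e=0$; the paper silently estimates these terms "similarly," so your treatment is at the same level of rigor, but a fully careful proof would note that this implicitly normalizes $u_e=0$ (temperature measured relative to the environment).
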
	
\begin{proof} 	
	Property $(ii)$ is a consequence of the continuity of $y_i$ in
	$\Omega_T$ and the definition of $f_i$ given in \eqref{raction-function}; we prove that  $(i)$ 
	and $(iii)$.
	\medskip
	
	The proof of $(i)$.	For any $w \in W $, we must prove that 
	\begin{equation}\label{estimate-fi}
	\|f_i(t, \,w)\|_{\mathtt{H}^2(\R)} \leq \mu, \;\;t \in [0,\,T],\;\; w\in W,
	\end{equation}
	where $\mu = \mu(\tilde R, k_1, \rho)$. 	
	Thus, we must estimate $f_i$ and $\partial_x^2 f_i$ in the $\mathtt{L}^2(\R)$-norm.
	
	The proof is performed for $f_1$. The proofs for $f_i$, $i = 2, \ldots, n-1$, and $f_n$ can be performed similarly. 
	
	Let be
	$\alpha(x,t)=\frac{K_1 b_1 y_1}{a_1+b_1 y_1},$ $\beta(x,t)=\frac{(c_1)_x}{a_1+b_1y_1},$ $\gamma(x,t)=\frac{d_1 y_1}{a_1+b_1 y_1},$ $\delta_1(x,t)=\frac{q_1}{a_1+b_1 y_1}$, and $\delta_2(x,t)=-\frac{\overline{q}_1}{a_1+b_1y_1}$. 
	
	To simplify this notation, we set $z = w_1$. Thus,
	\begin{align*}
	f_1(x,t,w)=&\gamma(x,t)g(z)+\alpha(x,t)g(z)z-\beta(x,t)z -\delta_1(x,t)(z-w_2)\\
	+&\delta_2(x,t)(z-u_e) := l_1 + l_2 + l_3 + l_4 + l_5,
	\end{align*}
	\begin{align*}
	\partial_x f_1 &  = \gamma_x g(z)+\gamma(x,t)g'(z)z_x +\alpha_x g(z)z
	+ \alpha g'(z)z_x z + \alpha g(z)z_x  \\
	&-\beta_x z -\beta z_x + \partial_x\delta_1(z-w_2)+\delta_1\partial_x(z-w_2)+\partial_x\delta_2(z-u_e)\\
	&+ \delta_2\partial_x(z-u_e) ,
	\end{align*}
	\begin{align}\label{f1xx}
	\partial_x^2 f_1&=\gamma_{xx} g(z)+2\alpha_x g'(z)z_x z+2\alpha_x g(z) z_x+\alpha_{xx}g(z)z+\alpha g''(z)(z_x)^2 z
	\nonumber \\
	&+\alpha g'(z)z_{xx}z+2\alpha g'(z)(z_x)^2 +\alpha g(z)z_{xx}-\beta_{xx}z-2\beta_x z_x \nonumber \\
	&+2\gamma_x g'(z)z_x+\gamma g''(z)(z_x)^2 +\gamma g'(z)z_{xx}+\partial_x^2\delta_1(z-w_2)\nonumber \\
	&+2(\partial_x\delta_1)\partial_x(z-w_2)+\delta_1\partial_x^2(z-w_2)+(\partial_x^2\delta_2)(z-u_e)
    \nonumber \\
	&+2(\partial_x\delta_2)(\partial_x(z-u_e))+\delta_2\partial_x^2(z-u_e)+\alpha_{xx} g(z)z-\beta z_{xx}
	\nonumber \\
	&:=h_1+\dots + h_{20}.
	\end{align}
	Now, we calculate the $L^{\infty}$-norm of the terms,
	$\alpha$, $\alpha_x$, $\alpha_{xx},$ $\beta,$ $\beta_x,$ $\beta_{xx},$ $\gamma,$ $\gamma_x,$ $\gamma_{xx},$ $\delta_1,$ $(\delta_1)_x,$ $(\delta_1)_{xx},$ $(\delta_2)$, $(\delta_2)_x$, and $(\delta_2)_{xx}$, we find that all these norms
	are less than a positive constant that depends on $\tilde R$ and $k_1$. For example,
	$$\|\alpha\|_{L^{\infty}(\R)} = \Big\|\frac{K_1 b_1 y_1}{a_1+b_1y_1}\Big\|_{L^{\infty}(\R)} \leq 
	\frac{K_1 \|b_1\|_{L^{\infty}(\R)} \|y_1\|_{L^{\infty}(\R)}}{k_1} \leq const(\tilde R, k_1).$$
	\begin{align*}
	\| l_1 \|_{\mathtt{L}^2(\R)} &=\Vert \gamma(x,t)g(z)\Vert_{\mathtt{L}^2}
	=\bigg(\int \vert\gamma(x,t)\vert^2 \vert g(z)\vert^2 dx\bigg)^{\frac{1}{2}}\\
	&\leq \Vert\gamma\Vert_{L^{\infty}}\bigg(\int \vert g(z)\vert^2 dx\bigg)^{\frac{1}{2}}
	\leq \Vert \gamma\Vert_{L^{\infty}}\bigg(\int \vert g'(\widetilde{z})z\vert^2 dx\bigg)^{\frac{1}{2}}\\
	&\leq \Vert \gamma\Vert_{L^{\infty}} \Vert g'\Vert_{L^{\infty}}\bigg(\int \vert z\vert^2 dx\bigg)^{\frac{1}{2}}
	=\Vert \gamma\Vert_{L^{\infty}} \Vert g'\Vert_{L^{\infty}} \Vert z\Vert_{\mathtt{L}^2}\\
	&\leq const(\tilde R, k_1)\tilde R\Vert z\Vert_{\mathtt{H}^2}<\infty, \quad \tilde z \in (0,\, z).
	\end{align*}
	$\| l_i \|_{\mathtt{L}^2(\R)}$ for $i = 2, \cdots 5$ can be estimated similarly. 
	Thus, $\| f_1 \|_{\mathtt{L}^2(\R)} \leq const(\tilde R,$ $k_1, \rho)$. In addition, based on similar calculations,
	$\| \partial^2_x f_1 \|_{\mathtt{L}^2(\R)} \leq const(\tilde R, k_1, \rho)$,
	
	Therefore, $\| f_1 \|_{\mathtt{H}^2(\R)} \leq const(\tilde R, k_1, \rho)$, completing the proof of (i).
	
	\medskip
	Proof of (iii):
	
	This is sufficient to prove that, for each 
	$t \in [0,\,T]$, the function $w \mapsto f_i(t, \,w)$, $i = 1, \ldots n$, is Lipschitz continuous; 
	that is,
	$$
	\| f_i(t, \,w) -  f_i(t, \,v)\|_{\mathtt{L}^2(\R)} \leq \kappa_i\,\| w - v\|_{\mathtt{L}^2(\R)^n}, 
	$$ 
	for all  $w$, $v \in W$,  
	where Lipschitz constant $\kappa_i$ is independent of $t$. In this case, $\kappa:= \max\{\kappa_i, \,\, i= 1, \ldots, n \}$, which depends on $k_1$ and $\tilde R$.
	
	Let 
	$\gamma(\sigma)=(1-\sigma)v+\sigma w,$ 
	for $0\leq \sigma\leq 1$. 
	Let $F_i (\sigma)=f_i(t, \gamma(\sigma))$ and $F(\sigma)=\big(F_1(\sigma),$ $\ldots, F_2(\sigma)\big)$ for $i = 1, \ldots, n$.  From the definition of $f_i$ in \eqref{raction-function}, a straightforward computation is that
	\begin{align*}
	\partial_{w_j} f_1=  \frac{1}{a_1 + b_1 y_1}\big[&\big( - (c_1)_x + K_1 b_1 y_1 g(w_1) + (K_1 b_1 w_1 + d_1)y_1 g'(w_1) \\
	& - q_1 - \bar q_1 \big) \delta_{1,j} + q_1\delta_{2,j} \big],\\
	\partial_{w_j} f_i=  \frac{1}{a_i + b_i y_i}\big[&\big( - (c_i)_x + K_i b_i y_i g(w_i) + (K_i b_i w_i + d_i)y_i g'(w_i) \\
	& - q_{i-1} - q_i \big) \delta_{i,j} + q_{i-1}\delta_{i-1,j} + q_i \delta_{i+1, j}\big],\,\, i = 2, \ldots, n-1,\\
	\partial_{w_j} f_n=  \frac{1}{a_n + b_n y_n}\big[&\big( - (c_n)_x + K_n b_n y_n g(w_n) + (K_n b_n w_n + d_n)y_n g'(w_n) \\
	& - q_{n-1} - \bar q_2 \big) \delta_{n,j} + q_{n-1}\delta_{n-1,j} \big]\,,
	\end{align*} 
	where $\delta_{i,j}$ denotes the Kronecker delta.
	
	Then, in view of $\gamma(\sigma)\in W$, using Sobolev's embedding together with the $f_i$ derivatives, we have   	
	$$|\nabla_w f_i (\gamma(\sigma))| \leq const,$$
	for all $\sigma \in [0,1]$, where  $\nabla_w f_i=\big(\partial_{w_1}f_i, \ldots, \partial_{w_n}f_i\big)$ and $const$ does not 
	depend on $t$.
	
	From the mean value theorem, we have
	\begin{equation*}
	\begin{split} 
	|f_i(t,w) & - f_i(t, v)| = |F_i(1)-F_i(0)| =|F_{i}'(\sigma_0)|
	\\
	& = | \nabla_w f_i (\gamma(\sigma_0))\cdot (w-v)| 
	\leq const\,|w-v|,
	\end{split}
	\end{equation*}
	where $\sigma_0 \in (0,1)$, 
	
	Therefore, 
	\begin{equation}\label{fikappa}
	\|{f_i(t, w)-f_i(t, v))}\|_{\mathtt{L}^2(\R)} \leq \kappa_i \|{w-v}\|_{\mathtt{L}^2(\R)^n}, \quad 
	i=1,\ldots, n,
	\end{equation}
	where $\kappa_i$ does not depend on $t$, concluding the proof. 	
\end{proof}

\medskip

\noindent {\bf Proof of Theorem \ref{combustion-local-theo}.}
Let $W \subset \mathtt{H}^2(\R)^n$ be an open ball, as defined in Lemma~\ref{properties-fi}, possibly enlarged such that $\phi \in W$. Now, let $U(t,\,t')=\big(U_1(t,\,t'),\ldots, U_n(t,\,t')\big)$, where $U_i(t, t')$ is the evolution propagator generated by operator $A_i(t)$.

	Defining 
	\begin{align*}
	\Phi v(t) = U(t,\,0)\phi + \int_0^t\,U(t,\,s)\,f(s,\,v(s))\,ds,
	\end{align*}
	the local solution is given as the unique Banach fixed point of the map $\Phi$ in the set
	\begin{align}\label{conjuntoE_T}
	E_T=\big\{v\in C([0, T],\mathtt{L}^2(\R)^n): & \sup_{[0,T]}\|v(t)\|_{\mathtt{H}^2(\R)^n}\leq R, 
	\nonumber \\ 
	& \mbox{and} \quad \sup_{[0,T]}\|v(t)-U(t,0)\phi\|_{\mathtt{L}^2(\R)^n} \leq M \big\},
	\end{align}
	where analogous to the proof of Theorem~\ref{scalar-local-theo} $R$ and $M$ are positive constants defined such that $\Phi$ is a contraction in $E_{T'}$ for some $0 < T' \leq T$. This theorem also shows
	the local solution $u = \Phi u \in C\big([0,\,T'],\,W \big) \cap C^1 \big( [0,\,T'],\,\mathtt{L}^2(\R)^n \big)$.
	This completes the proof. 
	$\Box$

\section{Global solution for the combustion problem}\label{combustion-global}
This section considers problem~\eqref{combustion-problem} for an unknown 
$u = (u_1,$ $\ldots, u_n) : \Omega \rightarrow \R^n $, where $\Omega = \R \times [0, \infty)$. 
The operator $L_i(t)$, defined in \eqref{operatorLi}, with the domain in $C_0^{\infty}(\R)$, is such that 

$\alpha_i$ and $\beta_i$ are defined as in \eqref{alpha-beta} with the functions
$a_i, \,\,b_i, \,\,c_i,\,\,d_i, \,\,\lambda_i: \, \R \rightarrow \R$, and $y_i : \Omega_T \rightarrow \R$ all given. The source function
$f_i: \Omega \times \R^n \rightarrow \R$ is defined as in \eqref{raction-function}.

Here, we propose the following hypothesis:
\begin{itemize}
	\item [] {\bf (Hy8):} Coefficients $a_i$, $b_i$, $c_i$, and $\lambda_i$
	satisfy the same hypotheses, as in Hy6, and function $y_i$ satisfies 
	Hy7, with $\Omega_T$ substituted by $\Omega$.
\end{itemize}

We note that Lemmas~\ref{H1H2} and \ref{propagadorfront} remain valid if we replace Hy6 and Hy7 with Hy8. Thus, from Lemma~\ref{propagador-combustao}, the evolution operator for this case also exists.

To prove Theorem~\ref{combustion-global-theo}, source function $f$ must satisfy the same properties as
Hy5 in \ref{scalar-global}, with the spaces $\mathtt{L}^2(\R)$ and $\mathtt{H}^2(\R)$ substituted with $\mathtt{L}^2(\R)^n$ and
$\mathtt{H}^2(\R)^n$. This is performed in the following lemma.

\begin{lemma} \label{properties-fi-global} 
	Assume that Hy8 is satisfied, and as in Lemma~\ref{properties-fi}, $W \subset \mathtt{H}^2(\R)^n$ is any open ball centered at the origin.
	Subsequently, the function $f = (f_1, \cdots, f_n)$ satisfies the following properties:
	\begin{itemize}
		\item[(i)]  $\|f(t, \,w)\|_{\mathtt{H}^2(\R)^n} \leq \mu $, for all $t \ge 0$ and $w \in W$,
		where $\mu$ is a constant depending on $k_1$ given by $Hy6(i)$, on $\tilde R$ the constant defined in \eqref{R}, and on $\rho$ the radius of the open ball $W$.
		\item[(ii)] For each $w \in W$, the function $t \mapsto f(t, \,w)$ is continuous on 
		$[0, \infty)$ to $\mathtt{L}^2(\R)^n$.
		\item[(iii)] For each $t \ge 0$, the function $w \mapsto f(t, \,w)$ is $\mathtt{L}^2(\R)^n$-Lipschitz continuous in $\mathtt{L}^2(\R)^n$; that is,
		$$
		\| f(t, \,w) - f(t, \,v)\|_{\mathtt{L}^2(\R)^n} \leq \kappa\,\| w - v\|_{\mathtt{L}^2(\R)^n}, 
		$$ 
		for all $w ,\, v \in W$, where the Lipschitz constant $\kappa$ does not depend on $t$.
	\end{itemize}
\end{lemma}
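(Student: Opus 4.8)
The plan is to observe that Lemma~\ref{properties-fi-global} is nothing more than the global-in-time restatement of Lemma~\ref{properties-fi}, and that the proof of the latter already produces all of its constants in a form that is independent of the length of the time interval. Indeed, inspecting the proof of Lemma~\ref{properties-fi}, the bound $\mu$ in part (i) and the Lipschitz constant $\kappa$ in part (iii) depend only on $k_1$ (from Hy6(i)), on $\tilde R$ defined in \eqref{R}, and on the radius $\rho$ of $W$; none of these quantities depends on $T$. Hence the whole argument transfers once we verify that these constants remain finite when the time horizon is extended to $[0,\infty)$.

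The first step is therefore to check that $\tilde R$ is finite under Hy8. Under Hy8 the coefficients $a_i$, $b_i$, $c_i$, $d_i$, $\lambda_i$ satisfy Hy6, so all the spatial $\mathtt{L}^\infty(\R)$ norms appearing in the definition \eqref{Ri} of $R_i$ are finite; moreover $y_i$ satisfies Hy7 with $\Omega_T$ replaced by $\Omega$, so that $y_i$, $(y_i)_x$, $(y_i)_{xx}$ all lie in $\mathtt{L}^\infty(\Omega)$. Consequently each term defining $R_i$ is finite, $\tilde R = \max\{R_1,\ldots,R_n\} < \infty$, and $\tilde R$ together with $k_1$ is now a time-independent constant valid on the entire half-space $\Omega$.

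Next I would repeat the three estimates. For (i), the pointwise-in-$t$ computation of $f_i$ and $\partial_x^2 f_i$ carried out for $f_1$ in \eqref{f1xx} goes through verbatim: the $\mathtt{L}^\infty(\R)$ norms of the auxiliary functions $\alpha,\beta,\gamma,\delta_1,\delta_2$ and their $x$-derivatives up to second order are each bounded by a $const(\tilde R,k_1)$, and because these bounds now use the norms of $y_i$ over all of $\Omega$ they hold uniformly for every $t \ge 0$. Summing the contributions as before yields $\|f(t,w)\|_{\mathtt{H}^2(\R)^n}\le \mu(\tilde R,k_1,\rho)$ for all $t\ge 0$. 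For (ii), the continuity of $t\mapsto f(t,w)$ into $\mathtt{L}^2(\R)^n$ follows, exactly as in Lemma~\ref{properties-fi}, from the continuity of $y_i$ on $\Omega$ and the explicit form \eqref{raction-function}, now on $[0,\infty)$. For (iii), the gradient bound $|\nabla_w f_i(\gamma(\sigma))|\le const$ and the mean-value-theorem argument are unchanged, producing a Lipschitz constant $\kappa$ depending only on $k_1$ and $\tilde R$.

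The only genuinely substantive point, and thus the main thing to guard against, is that extending from $\Omega_T$ to $\Omega$ might let some constant blow up as $T\to\infty$. This does not happen precisely because Hy8 imposes the $\mathtt{L}^\infty$ bounds on $y_i$ and its derivatives over the full region $\Omega$ rather than on each slab $\Omega_T$ separately; hence $\tilde R$ and $k_1$ stay finite and time-independent, and every estimate obtained in Lemma~\ref{properties-fi} holds uniformly for $t\in[0,\infty)$. With this observation the three properties follow directly, completing the proof.
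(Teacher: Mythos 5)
Your proposal is correct and follows essentially the same route as the paper, which likewise proves (ii) from the continuity of $y_i$ on $\Omega$ and disposes of (i) and (iii) by observing that the estimates of Lemma~\ref{properties-fi} are valid for any $T$, since the constants $\mu$ and $\kappa$ depend only on $k_1$, $\tilde R$, and $\rho$. Your explicit check that $\tilde R$ remains finite under Hy8 (because the $\mathtt{L}^\infty$ bounds on $y_i$ and its derivatives are imposed on all of $\Omega$) simply makes precise the time-uniformity the paper's proof takes for granted.
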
	

\begin{proof}
Property $(ii)$ is a consequence of the continuity of $y_i$ in $\Omega$ and definition of $f_i$. The proofs of $(i)$ and $(iii)$ are analogous to those of properties (i) and (iii) in Lemma~\ref{properties-fi}, respectively, because they are valid for any $T \ge 0$.  
\end{proof}	

\noindent {\bf Proof of Theorem \ref{combustion-global-theo}.}
Let $W \subset \mathtt{H}^2(\R)^n$ be an open ball, as defined in Lemma~\ref{properties-fi-global}, possibly enlarged such that $\phi = (\phi_1, \ldots, \phi_n)  \in W$.

Now, we define 
	\begin{align} \label{uniqueT}
	\nonumber
	T^* = & \sup \big\{ T > 0, \,\,\, \mbox{such that the unique solution
	 to \eqref{combustion-problem2} in $[0,\,T]$ exists} \big\}.
	\end{align}
		
	Let $u=(u_1, \ldots, u_n)$ be the local solution to \eqref{combustion-problem2} given by 
	Theorem~\ref{combustion-local-theo} is defined in its maximal interval $[0, T^*)$.

We also defined the following linear scalar equation:
\begin{equation}\label{eq-AG}
\partial_t v+A_{G}(t)v=f_i(t,u), \,\,\, 0 < t < T^*, \,\,\,v \in D(A_{G}(t)) = \mathtt{H}^2(\R),
\end{equation}
where operator $A_G(t)$,
\begin{equation}\label{op-AG}
A_G(t) v = - \alpha_i(t) v_{xx} + \beta_i(t) v_x, \,\,\, t \ge 0, 
\end{equation}
represents the closure of $L_i(t)$.
If $v:=u_i$, for $i=1,\ldots, n$, \eqref{eq-AG} is equivalent to the differential equation 
in\eqref{combustion-problem2}.

Note that, by hypothesis Hy8, functions $\alpha_i$ and $\beta_i$ satisfy Hy4, and  by Lemma \ref{properties-fi-global}, $f_i$ satisfies Hy5, as in \ref{scalar-global}. 
Therefore, the rest of the proof can be given by following the proof of Theorem~\ref{scalar-global-theo} for the scalar equation; thus, we omit it here. 
$\Box$

\section{Continuous dependence for the combustion problem}\label{combustion-continuous-dependence}

This section addresses the proof of Theorem~\ref{combusiton-continuous-dependence-theo}. 
As referred to in Section~\ref{introduction}, we first prove the continuous dependence for only the initial data and then prove the continuous dependence for the initial data and parameters together. In each case, the proof holds for any fixed $T>0$; it encompasses local and global continuity.  

The function $w \to f(t, w)$ must be $\mathtt{H}^2(\R)^n$-Lipschitz continuous, which is proven in the following lemma.

\begin{lemma} \label{properties-fi-H2} 
	Assume that Hy6 and Hy7 are satisfied and $W \subset \mathtt{H}^2(\R)^n$ is any open ball centered at the origin. Then, the source function 
	$w \mapsto f(t,\,w)$ is $\mathtt{H}^2(\R)^n$-Lipschitz continuous; that is,
	$$
	\| f(t,\,w) -  f(t,\,v)\|_{\mathtt{H}^2(\R)^n} \leq \kappa\,
	\|w - v\|_{\mathtt{H}^2(\R)^n}, \;\; \forall \, w,\,v \in W,
	$$
	where the Lipschitz constant $\kappa$ does not depend on $t$. 
\end{lemma}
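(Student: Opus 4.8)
The plan is to reduce the $\mathtt{H}^2$-Lipschitz bound to a single second-order estimate and then difference the expansion \eqref{f1xx} term by term. On the line, $\|g\|_{\mathtt{H}^2(\R)} \le C\big(\|g\|_{\mathtt{L}^2(\R)} + \|\partial_x^2 g\|_{\mathtt{L}^2(\R)}\big)$, since the first-order seminorm interpolates via $\|\partial_x g\|_{\mathtt{L}^2}^2 \le \|g\|_{\mathtt{L}^2}\|\partial_x^2 g\|_{\mathtt{L}^2}$. Applying this to $g = f_i(t,w)-f_i(t,v)$ and recalling that the $\mathtt{L}^2$-part is already Lemma~\ref{properties-fi}(iii) (whose bound $\le\kappa\|w-v\|_{\mathtt{L}^2(\R)^n}\le\kappa\|w-v\|_{\mathtt{H}^2(\R)^n}$ is stronger than needed), it suffices to prove, for each $i$ and uniformly in $t$,
$$
\big\|\partial_x^2 f_i(t,w)-\partial_x^2 f_i(t,v)\big\|_{\mathtt{L}^2(\R)} \le C(\tilde R,k_1,\rho)\,\|w-v\|_{\mathtt{H}^2(\R)^n}.
$$
As in Lemma~\ref{properties-fi} I would treat $f_1$, the remaining components being analogous.

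Three facts drive the estimate. First, the Sobolev embedding $\mathtt{H}^2(\R)\hookrightarrow C^1(\R)\cap\mathtt{L}^\infty(\R)$ gives uniform control $\|w_1\|_{\mathtt{L}^\infty},\,\|(w_1)_x\|_{\mathtt{L}^\infty}\le C\rho$ over the ball $W$ (likewise for $v$), so composite differences such as $g^{(m)}(w_1)-g^{(m)}(v_1)$ and $g'(w_1)w_1-g'(v_1)v_1$ lie in $\mathtt{L}^\infty$ with norm $\le C\|w-v\|_{\mathtt{H}^2}$ by the mean value theorem. Second, all coefficients $\alpha,\beta,\gamma,\delta_1,\delta_2$ and their $x$-derivatives of the orders appearing in \eqref{f1xx} are uniformly bounded, by Hy6--Hy7 and the constant $\tilde R$ of \eqref{R}; in particular the third-derivative demand on $c_1$ in Hy6 is exactly what puts $\beta_{xx}\in\mathtt{L}^\infty$. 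Third, $g,g',g'',g'''$ are all bounded on $\R$: for the Arrhenius law \eqref{darcy} each $g^{(k)}$ equals $e^{-E/\theta}$ times a rational function of $\theta$ and extends by $0$ at the origin. I would then write $\partial_x^2 f_1(w)-\partial_x^2 f_1(v)=\sum_{k=1}^{20}\big(h_k(w)-h_k(v)\big)$ from \eqref{f1xx} and bound each difference in $\mathtt{L}^2(\R)$.

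The mechanism for every summand is telescoping followed by a single Hölder pairing of one $\mathtt{L}^2$ factor against an $\mathtt{L}^\infty$ product. After splitting off the differences one at a time, each resulting term contains exactly one factor carrying the smallness $\|w-v\|_{\mathtt{H}^2}$ --- either a difference sitting in $\mathtt{L}^2$ ($w_1-v_1$, $(w_1-v_1)_x$, or $(w_1-v_1)_{xx}$) or an $\mathtt{L}^\infty$ composite difference as above --- together with at most one free $\mathtt{L}^2$ factor (a second derivative $(w_1)_{xx}$ or $(v_1)_{xx}$, of norm $\le\rho$), all remaining factors being uniformly bounded in $\mathtt{L}^\infty$. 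For the most nonlinear term $h_5=\alpha\,g''(z)(z_x)^2 z$ I would split $g''(w_1)((w_1)_x)^2 w_1-g''(v_1)((v_1)_x)^2 v_1$ into $[g''(w_1)-g''(v_1)]((w_1)_x)^2 w_1$, then $g''(v_1)\big(((w_1)_x)^2-((v_1)_x)^2\big)w_1$, then $g''(v_1)((v_1)_x)^2(w_1-v_1)$: the first uses $\|g'''\|_{\mathtt{L}^\infty}$ and $w_1-v_1\in\mathtt{L}^2$, the second uses $((w_1)_x)^2-((v_1)_x)^2=\big((w_1)_x+(v_1)_x\big)(w_1-v_1)_x$ with $(w_1-v_1)_x\in\mathtt{L}^2$, the third has $w_1-v_1\in\mathtt{L}^2$, each against $\mathtt{L}^\infty$ factors. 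The companion $h_6=\alpha\,g'(z)z_{xx}z$ is identical in spirit: its top piece $\alpha g'(v_1)v_1\,(w_1-v_1)_{xx}$ pairs the $\mathtt{L}^\infty$ coefficient against $(w_1-v_1)_{xx}\in\mathtt{L}^2$, while the lower piece $[g'(w_1)w_1-g'(v_1)v_1](w_1)_{xx}$ pairs an $\mathtt{L}^\infty$ composite difference against the free $\mathtt{L}^2$ factor $(w_1)_{xx}$. The coupling and pure-coefficient terms ($h_{14}$--$h_{19}$, and the $\beta$-terms $h_9,h_{10}$ and $-\beta z_{xx}$ in $h_{20}$) are linear in $w$ and bounded directly by $\|w-v\|_{\mathtt{L}^2}$ or $\|w-v\|_{\mathtt{H}^2}$. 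Summing the twenty bounds gives the displayed estimate with $\kappa=C(\tilde R,k_1,\rho)$, and since every constant comes only from $\tilde R$, $k_1$, $\rho$ and the $\Omega_T$-uniform bounds on $y_i$, the constant $\kappa$ is independent of $t$.

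I expect the only genuine difficulty to be the bookkeeping: confirming that after telescoping no summand ever forces two $\mathtt{L}^2$ factors, which would leave a product in $\mathtt{L}^1$ rather than $\mathtt{L}^2$ and break the estimate. This is safe here because \eqref{f1xx} contains no square of a second derivative --- $z_{xx}$ occurs only linearly --- so a difference $(w_1-v_1)_{xx}$ (or a free $(w_1)_{xx}$) is always multiplied by $\mathtt{L}^\infty$ quantities. The delicate cases are precisely $h_5$ and $h_{12}=\gamma\,g''(z)(z_x)^2$, where differencing the second-order composite needs $g'''$ bounded --- one order beyond the derivatives listed in \eqref{Ri} --- so I would record this as a standing consequence of the Arrhenius form \eqref{darcy} rather than of Hy6 alone.
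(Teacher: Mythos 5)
Your proposal is correct and follows essentially the same route as the paper's proof: the paper likewise reduces the estimate to the differenced second-derivative expansion (its $\tilde h_1+\cdots+\tilde h_{20}$, which is exactly your telescoping of \eqref{f1xx}), bounds each term by the mean value theorem and Sobolev embedding with one $\mathtt{L}^2$ factor against $\mathtt{L}^\infty$ factors, and combines with the $\mathtt{L}^2$-Lipschitz bound of Lemma~\ref{properties-fi-global}(iii). Your explicit interpolation step for the first-order term and your observation that the delicate terms need $\|g'''\|_{\mathtt{L}^\infty(\R)}$ --- guaranteed by the Arrhenius form \eqref{darcy} though not listed in \eqref{Ri} --- are details the paper leaves implicit, and they are handled correctly.
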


\begin{proof}
The proof is performed for $f_1$. The proofs for $f_i$, $i = 2, \ldots, n-1$, and $f_n$ can be performed similarly. Let $\alpha$, $\beta$, $\gamma$, $\delta_1$, and $\delta_2$ as at the beginning of the proof of Lemma \ref{properties-fi}. 	
Let also $w=(w_1,\cdots, w_n)$ and $v=(v_1,\cdots, v_n)$. Then, by writing $z=w_1$ and $\nu=v_1$, it follows that

\begin{align}\label{px2huv}
\partial_x^2\big(&f_1(t,w)-f_1(t,v)\big)=  \gamma_{xx} (g(z)-g(\nu))+2\alpha_x (g'(z)z_x z- g'(\nu)\nu_x \nu) \nonumber \\
& + 2\alpha_x (g(z)z_x-g(\nu)\nu_x)+\alpha(g''(z)z_x^2 z-g''(\nu)\nu_x^2 \nu)+\alpha_{xx}(g(z)z-g(\nu)\nu)
\nonumber \\
& + \alpha (g'(z)z_{xx}z-g'(\nu)\nu_{xx} \nu)+2\alpha (g'(z)z_x^2-g'(\nu)\nu_x^2) \nonumber \\
& + \alpha(g(z)z_{xx}-g(\nu)\nu_{xx}) -\beta_{xx} (z-\nu)-2\beta_x (z_x-\nu_x) 
-\beta(z_{xx}-\nu_{xx})\nonumber \\
& + 2\gamma_x (g'(z)z_x-g'(\nu)\nu_x)+\gamma(g''(z)z_x^2-g''(\nu)\nu_x^2) \nonumber \\
& + \gamma(g'(z)z_{xx} - g'(\nu)\nu_{xx})+\partial_x^2 \delta_{1}(z-\nu-w_2+v_2) \nonumber \\
& + \partial_x^2 \delta_2 (z-\nu) +\delta_2 \partial_x^2 (z-\nu)+\delta_1 \partial_x^2 (z-\nu-w_2+v_2) 
\nonumber \\
& + 2\partial_x \delta_1 \partial_x (z-\nu-w_2+v_2)+2\partial_x \delta_2 \partial_x (z-\nu) \nonumber \\ 
& :=  \ \tilde h_1+\cdots+\tilde h_{20}.
\end{align} 

To estimate $\tilde h_1$, we use the mean value inequality. Thus, 
\begin{align}\label{h1tilde}
\|\tilde h_1\|_{\mathtt{L}^2(\mathbb R)}&=\| \gamma_{xx} (g(z)-g(\nu))\|_{\mathtt{L}^2(\mathbb R)} \nonumber \\
&\leq \|\gamma_{xx}\|_{L^\infty(\Omega_T)}\|g'\|_{L^\infty(\R)}\|z-\nu\|_{\mathtt{L}^2(\mathbb R)} \nonumber \\
&\leq \tilde c_1(R,r_1)\|z-\nu\|_{\mathtt{L}^2(\mathbb R)}.
\end{align}

To estimate the other terms in \eqref{px2huv}, we can use Sobolev embedding and proceed in a manner similar to that above. Thus, we have 

\begin{equation}\label{tildehj}
\|\tilde h_j\|_{\mathtt{L}^2(\mathbb R)} \leq \tilde c_j(R,r_1)\|z-\nu\|_{\mathtt{H}^2(\R)}, \quad j=1,...,20.
\end{equation}

Therefore, by using \eqref{px2huv}, \eqref{h1tilde}, \eqref{tildehj}, and Lemma~\ref{properties-fi-global}(iii), we obtain the desired result.
\end{proof}
	
\medskip
\noindent {\bf Proof of Theorem \ref{combusiton-continuous-dependence-theo}.}
 (i) {\em Continuous dependence on the initial data.}
\medskip
  
We assume that $W \subset \mathtt{H}^2(\R)^n$ is an open ball, as defined in Lemma~\ref{properties-fi}, which can 
possibly enlarged such that $\phi = (\phi_1, \ldots, \phi_n)  \in W$. We assume also that  
$u = (u_1,\ldots, u_n)\in C([0, T],W^\prime)\cap C^1([0, T],\mathtt{L}^2(\R)^n)$ is the solution, 
given by Theorem~\ref{combustion-global-theo}:

Let $\phi^j$ be a sequence in $W$ such that $\phi^j \to \phi$ in the 
$\mathtt{H}^2(\R)^n$ norms. Let $u$ and 
$u^j$ be the solutions to problem~\eqref{combustion-problem2} with $u(0)=\phi$ and $u^j(0)=\phi^j$, respectively, as given by Theorem~\ref{combustion-global-theo}. We have to prove that $u^j \to u$ in $C([0, T],W^\prime)\cap C^1([0, T], \mathtt{L}^2(\R)^n)$, for any fixed $T>0$, 
and some $W^\prime\supset W$. We have
\begin{align*} \label{uni1}
&\|u^j(t) - u(t) \|_{\mathtt{H}^2(\R)^n}  \leq  \|U(t,\,0)(\phi^j - \phi)\|_{\mathtt{H}^2(\R)^n} + 
\Big\|\int_0^t U(t,\,s)\big(f(t, u^j(s))  \\
& - f(t, u(s))\big)\, d s \Big\|_{\mathtt{H}^2(\R)^n} \leq  e^{\tilde \beta T} \,\|\phi^j - \phi\|_{\mathtt{H}^2(\R)^n} +
e^{\tilde \beta T}\kappa\int_0^t\|u^j(s) - u(s)\|_{\mathtt{H}^2(\R)^n} d s .  
\end{align*}
Applying Gronwall's inequality, we have
	\begin{equation}
	\|u^j(t) - u(t) \|_{\mathtt{H}^2(\R)^n} \leq \tilde \kappa \|\phi^j - \phi\|_{\mathtt{H}^2(\R)^n} ,
	\end{equation}
where $\tilde \kappa$ is independent on $t$. 	By taking the supremum over $t\in[0,T]$, 
	\begin{equation} \label{conv1}
	\|u^j - u\|_{C([0, T], \mathtt{H}^2(\R)^n)} \leq \tilde \kappa \|\phi^j - \phi\|_{\mathtt{H}^2(\R)^n} .
	\end{equation}
Thus,
	\begin{equation*}
	\label{eq74}
	u^j\rightarrow u \,\,\text{in}\,\,C([0, T],W'),\,\,\text{since}\,\,\phi^j\rightarrow \phi\,\, 
	\text{in}\,\, W.
	\end{equation*}
Additionally, from \eqref{combustion-problem}, we have
\begin{align*}
u^j_t(t)-u_t(t) &= \alpha_i(t)(u^j_{xx}(t)-u_{xx}(t))-\beta_i(t)(u^j_x(t)-u_x(t))\\
&+f(t, u^j(t))-f(t, u(t)).
\end{align*}
Thus,
\begin{align*}  
\|u^j_t(t)&-u_t(t)\|_{\mathtt{L}^2(\R)^n} \leq \|\alpha_i(t)(u^j_{xx}(t)-u_{xx}(t))\|_{\mathtt{L}^2(\R)^n}\nonumber \\  
& + \|\beta_i(t)(u^j_x(t)-u_x(t))\|_{\mathtt{L}^2(\R)^n}  
 + \|f(t, u^j(t))-f(t, u(t))\|_{\mathtt{L}^2(\R)^n} \nonumber \\ 
& \leq const\,\|(u^j_{xx}(t)-u_{xx}(t))\|_{\mathtt{L}^2(\R)^n}  
+const\,\|(u^j_x(t)-u_x(t))\|_{\mathtt{L}^2(\R)^n} \nonumber \\
& +const\,\|u^j(t)-u(t)\|_{\mathtt{L}^2(\R)^n}
 \leq const\,\|u^j(t)-u(t)\|_{\mathtt{H}^2(\R)^n}\,. \nonumber
\end{align*}
Again, taking the supremum over $t\in[0,T]$, we obtain
\begin{equation}\label{conv2}
	\|u^j_t-u_t\|_{C([0, T],{\mathtt{L}^2(\R)^n})}\leq const\,\|u^j-u\|_{C([0, T],\mathtt{H}^2(\R)^n)}. 
\end{equation}
Therefore, from \eqref{conv1}, we obtain 
	
	$u^j\rightarrow u \,\,\text{in}\,\,C([0, T],W)\cap C^1([0, T],\mathtt{L}^2(\R)^n), \,\,
	\text{when}\,\,\phi^j\rightarrow \phi\,\, \text{in}\,\, W $.

\bigskip
\noindent (ii) {\em Continuous dependence with respect to initial data and parameters.} 
\medskip

Here, we assume that parameters $a_i$, $b_i$, $(c_i)_x$, $d_i$, $\lambda_i$ and function 
$y_i(t)$, where $t\in[0,T]$, are all in $\mathtt{H}^2(\R)$. 
\begin{lemma}\label{thF}
(i) Let $(a_i)^j$, $(b_i)^j$, $((c_i)_x)^j$, $(d_i)^j$, $(\lambda_i)^j$, and $(y_i)^j(t))$ be sequences in $\mathtt{H}^2(\R)$ such that
 $(a_i)^j\rightarrow a_i$, $(b_i)^j\rightarrow b_i$, $((c_i)_x)^j\rightarrow (c_i)_x$, $(d_i)^j\rightarrow d_i$, $(\lambda_i)^j\rightarrow \lambda_i$, and $(y_i)^j(t))\rightarrow y_i(t)$ in $\mathtt{H}^2(\R)$. Then,
 \begin{align} \label{convergence-fi}
(f_i)^j(t, w)\rightarrow f_i(t,w)\,\,\, \mbox{on}\,\,\ \mathtt{H}^2(\R), \,\,\, 
\mbox{for each}\,\,\, t \in [0, T],
 \end{align}  
where $f_i$ is defined as 
\eqref{raction-function}, $(f_i)^j$ is defined in \eqref{raction-function-j} below, and 
$w = (w_1, \ldots, w_n)$ $\in \mathtt{H}^2(\R)^n$.

\noindent (ii) If, in the hypotheses of item (i), the convergences are in $L^\infty(\mathbb R)$, then 
$$
(\alpha_i)^j(t)=\dfrac{(\lambda_i)^j}{(a_i)^j+(b_i)^j(y_i)^j(t)}\to \dfrac{\lambda_i}{a_i+b_i y_i(t)}= \alpha_i(t),
$$
and
$$
(\beta_i)^j(t)=\dfrac{(c_i)^j}{(a_i)^j+(b_i)^j(y_i)^j(t)}\to \dfrac{c_i}{a_i+b_i y_i(t)}= \beta_i(t)
$$
for $L^\infty(\mathbb R)$.
\end{lemma}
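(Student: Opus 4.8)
The plan is to reduce both assertions to two elementary facts about the relevant function spaces: that $\mathtt{H}^2(\R)$ is a Banach algebra continuously embedded in $L^\infty(\R)$, and that on the set of functions bounded below by a fixed positive constant the reciprocal map $h \mapsto 1/h$ is continuous (in the $\mathtt{H}^2$-norm for (i), in the $L^\infty$-norm for (ii)). Since each $f_i$ and each of $\alpha_i, \beta_i$ is a finite combination of sums, products, quotients and the fixed composite functions $g(w_i)$, $g'(w_i)$ of the parameters $a_i, b_i, (c_i)_x, d_i, \lambda_i, y_i$ (with $w$ held fixed and $K_i, q_i, \bar q_1, \bar q_2, u_e$ constants), once these elementary operations are shown to be continuous the convergence follows by stability of continuous maps under composition.

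For part (i), I would first record that $(a_i)^j \to a_i$, $(b_i)^j \to b_i$, $(y_i)^j(t) \to y_i(t)$ in $\mathtt{H}^2(\R) \hookrightarrow L^\infty(\R)$, so that the denominators $(a_i)^j + (b_i)^j (y_i)^j(t)$ converge in $L^\infty$ to $a_i + b_i y_i(t)$; since the limit is $\ge k_1$ by Hy6(i), there exist $j_0$ and $c = k_1/2 > 0$ with $(a_i)^j + (b_i)^j (y_i)^j(t) \ge c$ for all $j \ge j_0$. This uniform lower bound is what legitimizes the reciprocal. Next I would establish the reciprocal continuity in $\mathtt{H}^2$: writing $1/h^j - 1/h = (h - h^j)/(h^j h)$ and differentiating twice, so that $\partial_x(1/h) = -h'/h^2$ and $\partial_x^2(1/h) = -h''/h^2 + 2(h')^2/h^3$, every resulting term is a product of $\mathtt{H}^2$ factors divided by a power of a function bounded below by $c$, so the Banach-algebra estimate gives $\|1/h^j - 1/h\|_{\mathtt{H}^2} \to 0$. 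I would also note that $g \in C^\infty(\R)$ with bounded derivatives and $g(0) = g'(0) = 0$ (from \eqref{darcy}, $g, g', g''$ extend smoothly across $\theta = 0$), whence $g(w_i), g'(w_i) \in \mathtt{H}^2(\R)$ are fixed elements of the algebra. With these ingredients, expanding $(f_i)^j - f_i$ as a telescoping sum over the parameters that change and applying the algebra inequality term by term yields $(f_i)^j(t,w) \to f_i(t,w)$ in $\mathtt{H}^2(\R)$; membership $f_i(t,w) \in \mathtt{H}^2(\R)$ is already guaranteed by Lemma~\ref{properties-fi}(i).

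Part (ii) is the $L^\infty$ analogue and is simpler, since no derivatives intervene: $L^\infty(\R)$ is also a Banach algebra, the same uniform lower bound argument applies to the denominators, and the identity $1/h^j - 1/h = (h - h^j)/(h^j h)$ gives $\|1/h^j - 1/h\|_\infty \le c^{-2}\|h - h^j\|_\infty \to 0$. Multiplying by the $L^\infty$-convergent numerators $(\lambda_i)^j$ and $(c_i)^j$ then gives $(\alpha_i)^j(t) \to \alpha_i(t)$ and $(\beta_i)^j(t) \to \beta_i(t)$ in $L^\infty(\R)$.

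The main obstacle is the reciprocal continuity in $\mathtt{H}^2$, that is, controlling $1/h^j - 1/h$ in the $\mathtt{H}^2$-norm rather than merely in $L^\infty$: this is where the second derivative produces the term $2(h')^2/h^3$ and forces one to exploit simultaneously the Banach-algebra property of $\mathtt{H}^2$ and the uniform positive lower bound on the denominators along the entire sequence. Everything else is bookkeeping: once the reciprocal map and the fixed factors $g(w_i), g'(w_i)$ are under control, each of the finitely many terms of $(f_i)^j - f_i$ is handled by the same product-and-quotient estimate, and the three cases $i = 1$, $2 \le i \le n-1$, $i = n$ differ only in which coupling constants $q_{i-1}, q_i, \bar q_1, \bar q_2$ appear.
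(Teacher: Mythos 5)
Your proposal takes essentially the same route as the paper: the paper reduces both parts to the single auxiliary fact that if $(h_1)^j \to h_1$ and $(h_2)^j \to h_2$ in $\mathtt{H}^2(\R)$ (resp.\ $L^\infty(\R)$ for part (ii)) with $|h_2|,\,|(h_2)^j| \ge \mathrm{const} > 0$, then $(h_1)^j/(h_2)^j \to h_1/h_2$, which is exactly your quotient-continuity lemma. Your write-up is correct and merely supplies the details the paper leaves implicit --- the Banach-algebra estimate behind the $\mathtt{H}^2$ bound on $1/h^j - 1/h$ and its derivatives, the derivation of the uniform lower bound on the denominators for large $j$ from Hy6(i) and the $L^\infty$ embedding (which the paper instead builds into the hypotheses of its auxiliary fact), and the verification that $g(w_i)$, $g'(w_i)$ are fixed elements of $\mathtt{H}^2(\R)$.
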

\begin{proof}
From \eqref{raction-function}, we define
\begin{align} \label{raction-function-j} 
&(f_1)^j(t, w) = \frac{-((c_1)^j)_x\,w_1 }{(a_1)^j + (b_1)^j (y_1)^j(t)} + 
\frac{\big(K_1 (b_1)^j w_1+(d_1)^j\big)(y_1)^j(t)\,g(w_1)}{(a_1)^j + (b_1)^j (y_1)^j(t)} 
\nonumber \\
& \phantom{------}+ \frac{q_1(w_2 - w_1)}{(a_1)^j + (b_1)^j (y_1)^j(t)}
- \frac{\overline q_1(w_1 - u_e)}{(a_1)^j + (b_1)^j (y_1)^j(t)}\,,
\nonumber \\
\nonumber \\
&(f_i)^j(t, w) = \frac{-((c_i)_x)^j\,w_i }{(a_i)^j + (b_i)^j (y_i)^j(t)} + 
\frac{\big(K_i (b_i)^j w_i+(d_i)^j\big)(y_i)^j(t)\,g(w_i)}{(a_i)^j + (b_i)^j (y_i)^j(t)} 
\nonumber \\
& \phantom{------}+ \frac{q_i(w_{i+1} - w_i)}{(a_i)^j + (b_i)^j (y_i)^j(t)}
- \frac{q_{i-1}(w_i - w_{i-1})}{(a_i)^j + (b_i)^j (y_i)^j(t)}\,, \,\,i = 2,\ldots, n-1,
 \\
&(f_n)^j(t, w) = \frac{-((c_n)^j)_x\,w_n }{(a_n)^j + (b_n)^j (y_n)^j(t)} + 
\frac{\big(K_n (b_n)^j w_n+(d_n)^j\big)(y_n)^j(t)\,g(w_n)}{(a_n)^j + (b_n)^j (y_n)^j(t)} 
\nonumber \\
& \phantom{------} - \frac{q_{n-1}(w_n - w_{n-1})}{(a_n)^j + (b_n)^j (y_n)^j(t)}
- \frac{\overline q_2(w_n - u_e)}{(a_n)^j + (b_n)^j (y_n)^j(t)}\,,
\nonumber 
\end{align}

The proof of (i) is an easy consequence of the following: if $h_1$, $(h_1)^j$, $h_2$, and $(h_2)^j$ are functions in $\mathtt{H}^2(\R)$ such that 
$|h_2|,\,\,|(h_2)^j| \ge \mbox{const} > 0$, $(h_1)^j \to h_1$, and $(h_2)^j \to h_2$ on $\mathtt{H}^2(\R)$, then $\dfrac{(h_1)^j}{(h_2)^j}\rightarrow \dfrac{h_1}{h_2}$ on $\mathtt{H}^2(\R)$. 

The proof of (ii) is similar to that of (i) by substituting $\mathtt{H}^2(\R)$  by $L^\infty(\R)$. 	
\end{proof}

\begin{lemma}\label{thA}
We consider problem~\eqref{combustion-problem2} with the differential operator $A_i$ defined in 
\eqref{operatorAi}.
Let $\alpha_i(t), \,\,\beta_i(t),\,\,(\alpha_i)^j(t), \,\,(\beta_i)^j(t)\in \mathtt{H}^2(\R)$ as in Lemma~\ref{thF}. Let also the 
sequence of operators 
$$(A_i)^j(t)u = (\alpha_i)^j(t)u_{xx}+(\beta_i)^j(t)u_{x}.$$	
Then,
\begin{enumerate}
\item [(i)] $(A_i)^j(t)\rightarrow A_i(t)$ on $\mathcal{B}\big(\mathtt{H}^2(\R),\,\mathtt{L}^2(\R)\big)$, 
and
\item [(ii)] $\lim_{|E|\rightarrow 0}\int_E \|A^j(t)\|_{\mathcal{B}\big(\mathtt{H}^2(\R),\,\mathtt{L}^2(\R)\big)}dt\rightarrow 0$. 
\end{enumerate}
\end{lemma}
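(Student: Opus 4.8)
The plan is to reduce both claims to the Sobolev embedding $\mathtt{H}^2(\R)\hookrightarrow \mathtt{L}^\infty(\R)$. Both $A_i(t)$ and $(A_i)^j(t)$ act on the first and second derivatives of $u$ by multiplication by the coefficients $\alpha_i(t),\beta_i(t)$ and $(\alpha_i)^j(t),(\beta_i)^j(t)$, and a multiplication operator on $\mathtt{L}^2(\R)$ is controlled by the $\mathtt{L}^\infty$-norm of its coefficient, which is in turn dominated by the $\mathtt{H}^2$-norm. Thus the $\mathtt{H}^2$-convergence of the coefficients supplied by Lemma~\ref{thF} transfers directly into the operator-norm statements, and the structural bounds of Hy6 and Hy7 supply the uniform control needed for part (ii).

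First I would prove (i). For a fixed $t$ and arbitrary $u \in \mathtt{H}^2(\R)$ I write
$$
\big((A_i)^j(t) - A_i(t)\big)u = \big((\alpha_i)^j(t) - \alpha_i(t)\big)u_{xx} + \big((\beta_i)^j(t) - \beta_i(t)\big)u_x,
$$
and estimate each term in $\mathtt{L}^2(\R)$ by pulling the coefficient out in $\mathtt{L}^\infty(\R)$, e.g. $\|((\alpha_i)^j(t)-\alpha_i(t))u_{xx}\|_{\mathtt{L}^2} \le \|(\alpha_i)^j(t)-\alpha_i(t)\|_{\mathtt{L}^\infty}\,\|u\|_{\mathtt{H}^2}$, and similarly for the $\beta$-term using $\|u_x\|_{\mathtt{L}^2}\le \|u\|_{\mathtt{H}^2}$. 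Applying the embedding $\|\cdot\|_{\mathtt{L}^\infty}\le C\|\cdot\|_{\mathtt{H}^2}$, dividing by $\|u\|_{\mathtt{H}^2}$, and taking the supremum over the unit ball yields
$$
\|(A_i)^j(t) - A_i(t)\|_{\mathcal{B}(\mathtt{H}^2(\R),\mathtt{L}^2(\R))} \le C\big(\|(\alpha_i)^j(t) - \alpha_i(t)\|_{\mathtt{H}^2} + \|(\beta_i)^j(t) - \beta_i(t)\|_{\mathtt{H}^2}\big),
$$
whose right-hand side tends to $0$ by Lemma~\ref{thF}. For (ii) I would run the identical estimate without the subtraction to obtain the pointwise-in-$t$ bound $\|(A_i)^j(t)\|_{\mathcal{B}(\mathtt{H}^2(\R),\mathtt{L}^2(\R))} \le C\big(\|(\alpha_i)^j(t)\|_{\mathtt{L}^\infty}+\|(\beta_i)^j(t)\|_{\mathtt{L}^\infty}\big)$, then argue that the right-hand side is bounded by a constant $C_0$ independent of both $j$ and $t$; since $\int_E \|(A_i)^j(t)\|_{\mathcal{B}}\,dt \le C_0|E|$, the integral vanishes as $|E|\to 0$ uniformly in $j$.

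The main obstacle is securing that uniformity in (ii). Here I must invoke the structure of the coefficients: by \eqref{unifor} and Hy6(i), together with the nonnegativity of $y_i$ from Hy7, the denominators $a_i + b_i y_i$ remain bounded below by a positive constant, so the quotients defining $\alpha_i,\beta_i$ and their $j$-approximants cannot blow up and are uniformly bounded in $\mathtt{L}^\infty(\Omega_T)$ in $t$; uniformity in $j$ then follows because a convergent sequence is bounded. (In the global setting one replaces Hy6, Hy7 by Hy8 and $\Omega_T$ by $\Omega$, and the same argument applies for any fixed $T$.) Once this dominating constant $C_0$ is in hand, the remaining steps are routine.
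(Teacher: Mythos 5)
Your proposal is correct and follows essentially the same route as the paper: the same decomposition of $\big((A_i)^j(t)-A_i(t)\big)u$ into coefficient differences multiplying $u_{xx}$ and $u_x$, estimated by pulling the coefficients out in the $\mathtt{L}^\infty(\R)$-norm for (i), and a $j$- and $t$-uniform bound on $\|(A_i)^j(t)\|_{\mathcal{B}(\mathtt{H}^2(\R),\,\mathtt{L}^2(\R))}$ for (ii). The only harmless variations are that you obtain the $\mathtt{L}^\infty$ control of the coefficient differences via the embedding $\mathtt{H}^2(\R)\hookrightarrow \mathtt{L}^\infty(\R)$ where the paper instead cites the $\mathtt{L}^\infty$-convergence from Lemma~\ref{thF}(ii), and that you make explicit the uniformity in $j$ and $t$ of the dominating constant in (ii) (via the lower bound on the denominators $a_i+b_iy_i$), a point the paper leaves implicit in its ``$const$''.
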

\begin{proof}
We obtain	
	\begin{align*}
\|(A_i)^j&(t)u-A_i(t)u\|_{\mathtt{L}^2(\R)} \\
&= \|\big((\alpha_i)^j(t) -\alpha_i(t)\big)u_{xx}+ \big((\beta_i)^j(t)-\beta_i(t)\big)u_{x}\|_{\mathtt{L}^2(\R)}\\
&\leq \|\big((\alpha_i)^j(t)-\alpha_i(t)\big)u_{xx}\|_{\mathtt{L}^2(\R)}+
\|\big((\beta_i)^j(t)-\beta_i(t)\big)u_{x}\|_{\mathtt{L}^2(\R)}\\
&\leq  \sup_\R\big(|(\alpha_i)^j(t)-\alpha_i(t)|^2+|(\beta_i)^j(t)-\beta_i(t)|^2 \big)\|u\|_{\mathtt{H}^2(\R)}.
	\end{align*} 
	Therefore,
	\begin{eqnarray*}
	\|A^j(t)-A(t)\|_{\mathcal{B}(\mathtt{H}^2,\,\mathtt{L}^2)}&\leq& \|(\alpha_i)^j( t)-\alpha(t)\|^2_{L^\infty(\R)}+\|(\beta_i)^j-\beta(t)\|^2_{L^\infty(\R)},
	\end{eqnarray*}
indicating that $A^j(t)\rightarrow A(t)$ on $\mathcal{B}\big(\mathtt{H}^2(\R),\,\mathtt{L}^2(\R)\big)$, when $j \to \infty$.
	
The proof of (ii) follows from
	
\begin{align*}
\|A^j(t)u\|_{\mathtt{L}^2(\R)}&\leq \|(\alpha_i)^j(t)u_{xx}\|_{\mathtt{L}^2(\R)} +
\|(\beta_i)^j(t) u_{x}\|_{\mathtt{L}^2(\R)} \\
& \leq const\, (\|u_{xx}\|_{\mathtt{L}^2(\R)} 
+\|u_{x}\|_{\mathtt{L}^2(\R)}\leq const\, \|u\|_{\mathtt{H}^2(\R)},
\end{align*}
indicating that $\|A^j(t)\|_{\mathcal{B}(\mathtt{H}^2(\R),\,\mathtt{L}^2(\R))}\leq const\, $. Therefore, 
	$$\lim_{|E|\rightarrow 0}\int_E \|A^j(t)\|_{\mathcal{B}(\mathtt{H}^2(\R),\,\mathtt{L}^2(\R))}dt\rightarrow 0. $$
\end{proof}

Now, consider the following problems:

\begin{equation}\label{solu1}
\left\{\begin{array}{l}
\partial_t u+A(t)u=f(t,u),\\
u(0)=\phi,\,\,
\end{array}\right.
\end{equation}
and
\begin{equation}\label{solu2}
\left\{\begin{array}{l}
\partial_t u+A^j(t)u=f^j(t,u),\\
u^j(0)=\phi^j,\,\,
\end{array}\right.
\end{equation}
where $A(t) = \big(A_1(t), \ldots A_n(t) \big)$, 
$A^j(t) = \big((A_1)^j(t), \ldots (A_n)^j(t) \big)$, and $\phi^j$ is a sequence in $W$. 

The proof of the continuous dependence for the initial data and parameters is a consequence of the following theorem.
\begin{theorem}
As in Lemma~\ref{thF}, let 
$(a_i)^j\rightarrow a_i$, $(b_i)^j\rightarrow b_i$, $((c_i)_x)^j\rightarrow (c_i)_x$, $(d_i)^j\rightarrow d_i$, $(\lambda_i)^j\rightarrow \lambda_i$, and $(y_i)^j(t))\rightarrow y_i(t)$  be on $\mathtt{H}^2(\R)$.	Let also $\phi^j \to \phi$. If
$u(t),u^j(t)\in C([0, T],W)\cap C^1([0, T],\mathtt{L}^2(\R))$ are solutions to \eqref{solu1} and \eqref{solu2}, respectively, then $u^j\rightarrow u$ in 
$ C([0, T],W)\cap C^1([0, T],\mathtt{L}^2(\R))$,
\end{theorem}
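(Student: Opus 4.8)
The plan is to run the same Gronwall argument as in part~(i), carried out in the $\mathtt{H}^2(\R)^n$-norm, but now tracking the additional errors created by replacing the generators $A$ and the source $f$ by the perturbed families $A^j$ and $f^j$. Writing $U$ and $U^j$ for the evolution operators generated by $A$ and $A^j$, which exist and are bounded on both $\mathtt{L}^2(\R)^n$ and $\mathtt{H}^2(\R)^n$ (Lemma~\ref{propagador-combustao}, with the uniform exponential bound $e^{\tilde\beta T}$), the two solutions admit the mild representations
\begin{align*}
u(t) &= U(t,0)\phi + \int_0^t U(t,s) f(s,u(s))\,ds, \\
u^j(t) &= U^j(t,0)\phi^j + \int_0^t U^j(t,s) f^j(s,u^j(s))\,ds.
\end{align*}

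First I would subtract these representations and insert intermediate terms so as to isolate four contributions: the initial-data error $U^j(t,0)(\phi^j-\phi)$; the source-perturbation error $U^j(t,s)\big(f^j(s,u^j)-f(s,u^j)\big)$; the Lipschitz term $U^j(t,s)\big(f(s,u^j)-f(s,u)\big)$; and the two evolution-operator errors $\big(U^j(t,0)-U(t,0)\big)\phi$ and $\big(U^j(t,s)-U(t,s)\big)f(s,u)$. The first three are handled exactly as in part~(i): $\|U^j(t,0)(\phi^j-\phi)\|_{\mathtt{H}^2(\R)^n}\le e^{\tilde\beta T}\|\phi^j-\phi\|_{\mathtt{H}^2(\R)^n}\to 0$; the source-perturbation term is bounded by $e^{\tilde\beta T}\sup_s\|f^j(s,u^j(s))-f(s,u^j(s))\|_{\mathtt{H}^2(\R)^n}$, which tends to $0$ by Lemma~\ref{thF}(i); and the Lipschitz term is bounded by $e^{\tilde\beta T}\kappa\,\|u^j(s)-u(s)\|_{\mathtt{H}^2(\R)^n}$ using the $\mathtt{H}^2(\R)^n$-Lipschitz continuity of $f$ from Lemma~\ref{properties-fi-H2}, producing the integral term needed for Gronwall.

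The hard part is the two evolution-operator errors, which do not decay by any Lipschitz-in-$u$ mechanism and instead require the continuous dependence of the Kato evolution operator on its generator. Here I would use the Duhamel-type identity
$$U^j(t,s)-U(t,s)=\int_s^t U^j(t,r)\big(A(r)-A^j(r)\big)U(r,s)\,dr,$$
obtained by differentiating $r\mapsto U^j(t,r)U(r,s)$ and using the generator relations of the evolution operators from Lemma~\ref{propagador-combustao}. Since $U(r,s)$ maps $\mathtt{H}^2(\R)^n$ into itself boundedly, $A(r)-A^j(r)\to 0$ in $\mathcal B(\mathtt{H}^2(\R),\mathtt{L}^2(\R))$ by Lemma~\ref{thA}(i), and the $U^j$ are uniformly bounded thanks to the integrability bound of Lemma~\ref{thA}(ii), this identity forces $U^j(t,s)\to U(t,s)$ strongly, uniformly on the triangle $\triangle$; these are precisely the hypotheses of Kato's approximation theorem for CD-systems in \cite{Kato5}. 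Consequently $\|\big(U^j(t,0)-U(t,0)\big)\phi\|$ and $\sup_{0\le s\le t}\|\big(U^j(t,s)-U(t,s)\big)f(s,u(s))\|$ tend to $0$. I expect this operator-convergence step to be the genuine obstacle, since controlling it in the $\mathtt{H}^2(\R)^n$-norm rather than only in $\mathtt{L}^2(\R)^n$ is where the uniform stability in $Y=\mathtt{H}^2(\R)$ is essential.

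Collecting all the bounds gives an inequality of the form $\|u^j(t)-u(t)\|_{\mathtt{H}^2(\R)^n}\le \varepsilon_j+C\int_0^t\|u^j(s)-u(s)\|_{\mathtt{H}^2(\R)^n}\,ds$ with $\varepsilon_j\to 0$ and $C$ independent of $t$ and $j$; Gronwall's inequality then yields $u^j\to u$ in $C([0,T],\mathtt{H}^2(\R)^n)$, hence in $C([0,T],W)$. Finally, for the $C^1$ convergence I would differentiate the two equations to get $u^j_t-u_t=-\big(A^j(t)u^j-A(t)u\big)+\big(f^j(t,u^j)-f(t,u)\big)$, split $A^j u^j-Au=A^j(u^j-u)+(A^j-A)u$, and bound the first piece by the uniform $\mathcal B(\mathtt{H}^2,\mathtt{L}^2)$-bound of $A^j$ times $\|u^j-u\|_{\mathtt{H}^2(\R)^n}$ and the second by Lemma~\ref{thA}(i); together with Lemma~\ref{thF}(i) and the $\mathtt{H}^2$-convergence already established, this gives $\|u^j_t-u_t\|_{\mathtt{L}^2(\R)^n}\to 0$ uniformly in $t$, completing the proof in $C([0,T],W)\cap C^1([0,T],\mathtt{L}^2(\R)^n)$.
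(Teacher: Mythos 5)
Your architecture differs from the paper's. The paper's entire proof of this theorem consists of verifying the hypotheses of Kato's abstract perturbation theorem --- $f^j(t,u(t)) \to f(t,u(t))$ in $\mathtt{H}^2(\R)$ via Lemma~\ref{thF}, $A^j(t)\to A(t)$ in $\mathcal{B}\big(\mathtt{H}^2(\R),\mathtt{L}^2(\R)\big)$ plus the uniform integrability condition via Lemma~\ref{thA} --- and then delegating everything else to \cite[Theorem~7]{Kato3}. You instead attempt to reprove the relevant part of that theorem by hand, through a mild-solution decomposition and Gronwall. Your handling of the initial-data error, the source-perturbation error, and the Lipschitz term (via Lemma~\ref{properties-fi-H2}) is sound and mirrors the mechanics of part (i) of the paper's proof; the $C^1$ step at the end is likewise fine once $\mathtt{H}^2$-convergence is in hand.

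The genuine gap is in the evolution-operator step, exactly where you suspected trouble. The Duhamel identity $U^j(t,s)-U(t,s)=\int_s^t U^j(t,r)\big(A(r)-A^j(r)\big)U(r,s)\,dr$ is correct on $\mathtt{H}^2(\R)$, but since $A(r)-A^j(r)$ maps $\mathtt{H}^2(\R)$ into $\mathtt{L}^2(\R)$ and $U^j(t,r)$ then acts only as a bounded operator on $\mathtt{L}^2(\R)$, the identity controls $\big(U^j(t,s)-U(t,s)\big)y$ in the $\mathtt{L}^2$-norm for $y\in \mathtt{H}^2(\R)$; that is, it gives $U^j\to U$ in $\mathcal{B}\big(\mathtt{H}^2(\R),\mathtt{L}^2(\R)\big)$, not strong convergence in $\mathtt{H}^2(\R)$. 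Your Gronwall inequality, however, is formulated in the $\mathtt{H}^2$-norm, so the terms $\big(U^j(t,0)-U(t,0)\big)\phi$ and $\big(U^j(t,s)-U(t,s)\big)f(s,u(s))$ must be small in $\mathtt{H}^2(\R)^n$, which this identity cannot deliver --- one would need something like $A^j\to A$ in $\mathcal{B}\big(\mathtt{H}^4(\R),\mathtt{H}^2(\R)\big)$ with extra regularity of $\phi$ and $f$, or Kato's intertwining and stability machinery in $Y=\mathtt{H}^2(\R)$. Your escape clause --- that ``these are precisely the hypotheses of Kato's approximation theorem'' --- is a citation of the step rather than a proof of it, and it collapses your argument back into the paper's one-line invocation of \cite[Theorem~7]{Kato3}; as a self-contained argument it does not close. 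A secondary looseness (shared with the paper): Lemma~\ref{thF} gives $f^j(t,w)\to f(t,w)$ only pointwise in $t$ for fixed $w$, while your bound on the source-perturbation term needs $\sup_{s\in[0,T]}\|f^j(s,u^j(s))-f(s,u^j(s))\|_{\mathtt{H}^2(\R)^n}\to 0$, i.e., uniformity in $s$ and in the moving argument $u^j(s)\in W$; this is supplied by uniform-in-$t$ convergence of $(y_i)^j$ and uniform estimates on the ball $W$, but it should be stated.
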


\begin{proof}
From Lemma~\ref{thF}, $f^j(t, u(t)) \rightarrow f(t, u(t))$ in $\mathtt{H}^2(\R)$ and
$\alpha_i^j(t) \to \alpha_i(t)$ in $\mathtt{L}^{\infty}$. From Lemma~\ref{thA}, we obtain  	
$A^j(t)\rightarrow A(t)$ in $\mathcal{B}\big(\mathtt{H}^2(\R),\,\mathtt{L}^2(\R)\big)$. The remaining proof is provided by \cite[Theorem~7]{Kato3}. 	
\end{proof}	

\section{Concluding remarks}

This study used the semigroup theory to prove the suitability of an initial value problem, which models a combustion front through a multi-layer porous medium. The following are some  
advantages of using the semigroups theory of operators and Kato's theory in the proofs:
i) They allowed us to consider a more realistic model, where some physical parameters (e.g., porosity, thermal conductivity, and initial fuel concentration) depend on the spatial variable $x$. Earlier studies \cite{Batista1} and
\cite{Batista2} considered that these parameters were constant.
ii) The uniqueness of the solution is a natural consequence of the uniqueness of the evolution operator both locally and globally.
iii) The proof of continuous dependence for the initial data and parameters was possible because the solution has an explicit representation given by the integral equation \eqref{integral-form1},
in the case also of both the local and global in time solutions. 
 
We remarked that the semigroup theory does not apply directly to solve the corresponding problem 
for the complete system when the fuel concentrations are also unknown because the equations for
concentrations are not of the reaction-diffusion type (see the differential equations in 
\cite[equation~(2)]{Batista2}). However, we can use the solution of the simplified problem depending on the concentrations obtained in this study to build an iteration scheme that converges to the solution of the corresponding complete problem. This is an interesting problem to be investigated.

\begin{appendix}

\section{Local solution for a scalar equation} \label{scalar-local}

Here, we describe the main results of the general semilinear second-order scalar equations. 
The following second-order semilinear scalar equation is considered:  
\begin{equation}\label{scalar-local-eq}
\partial_t u - a(x,\, t)\, u_{xx} + b(x,\,t)\,u_x = f(x,\,t,\,u),\,\,x \in \R, \,\, t \in (0, T],
\end{equation}
where $a,\,\, b\, :\Omega_T = \R \times [0, T] \to \R$ and $f : \Omega_T \times \R \to \R$ are the functions for some $T >0$. We also consider the family of associated differential operators, defined as 
\begin{equation}\label{ope-L}
\left(L(t)\phi\right)(x) = -a(x,t) \phi^{\prime\prime}(x) + b(x,t) \phi^\prime(x),\;\;
\phi \in C^{\infty}_0(\R),\,\,\,t \in [0, T].
\end{equation}

To solve \eqref{scalar-local-eq} using semigroup theory, the following hypothesis should be established:
first for the coefficients of $L(t)$ and then for the source function $f$ as follows: 
 
\begin{itemize}
	\item [] {\bf(Hy1):} $L(t)$ is uniformly parabolic in $\Omega_T$; that is, there are positive constants $\mu_0$ and
	$\mu_1$ such that
	\begin{equation}\label{A1}
	\mu_0\leq a(x,t)\leq \mu_1, \quad \text{for all} \,\,\, (x,t)\in\Omega_T,
	\end{equation}
	
	\item [] {\bf (Hy2)}: The coefficients of $L(t)$ satisfies
	\begin{itemize}
		\item [(i)] $a(x,\,t)$ is twice differentiable in $x$, $b(x,\,t)$ is differentiable in $x$, for all
		$(x,\,t) \in \Omega_T$, and 
		\begin{equation}\label{A2}
		a,\, b, \,a_x, \,b_x,\, \,a_{xx} \in \mathtt{L}^\infty(\Omega_T).
		\end{equation}
		\item [(ii)] $a(x,\,\cdot)$ and $b(x,\,\cdot)$ are a measurable function on $[0,\,T]$, and  there are measurable
		functions  $\tilde{a},\;\tilde{b} \colon \Omega_T \rightarrow \R$, integrable on $[0,\,T]$ and
		$\tilde{a}(\cdot,\,t), \,\tilde{b}(\cdot,\,t) \in \mathtt{L}^\infty(\R)$ uniformly in $t$ such that $a$ and $b$ 
		are equal
		to the indefinite integral of $\tilde{a}$ and $\tilde{b}$, respectively; that is,
		\begin{eqnarray*}
			a(x,\,t) & = & a(x,\,0) + \int_0^t\,\tilde{a}(x,\,s)\,\mathrm{ds},  \\
			b(x,\,t)   &=& b(x,\,0) + \int_0^t\,\tilde{b}(x,\,s)\,\mathrm{ds}.
		\end{eqnarray*}
	\end{itemize}
\end{itemize}

\begin{itemize}
	\item [] {\bf (Hy3):}  The function $(t,\,w) \mapsto f(t, \,w)$, satisfies the following:
	\begin{itemize}
		\item[(i)] $f$ is bounded  on $[0,\,T] \times W$ to $\mathtt{H}^2(\R)$, where $W$ is any open ball in $\mathtt{H}^2(\R)$. That is,
		$$
		\|f(t, \,w)\|_{\mathtt{H}^2(\R)} \leq \mu, \,\,\mbox{for}\,\,t \in [0,\,T]\,\,\mbox{and}\,\, w\in W,
		$$
		being $\mu$ a constant depending only on the radius of $W$.
		\item[(ii)] For each $w \in W$, the function $t \mapsto f(t, \,w)$ is continuous on $[0,\,T]$ to $\mathtt{L}^2(\R)$.
		\item[(iii)] For each $t \in [0,\,T]$, $w \mapsto f(t, \,w)$ is $\mathtt{L}^2(\R)$-Lipschitz continuous in $W$; that is,
		$$
		\| f(t, \,w) -  f(t, \,v)\|_{\mathtt{L}^2(\R)} \leq \kappa\,\| w - v\|_{\mathtt{L}^2(\R)}, 
		$$
		where $\kappa$ is a constant depending only on the radius of $W$.
	\end{itemize}
\end{itemize}

\begin{lemma}\label{closure}
	If $\left\{L(t)\right\}_{ t\in [0,\,T]}$ satisfies Hy1 and Hy2, 
	then the operator $L(t)$  is closable in $\mathtt{L}^2(\R)$. Furthermore, the closure of $L(t)$, denoted by $A(t)$, is independent of $t$ with $D\left(A(t)\right)= \mathtt{H}^2(\R)$. 
\end{lemma}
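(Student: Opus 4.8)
The plan is to reduce both claims---closability and the identification $D(A(t))=\mathtt{H}^2(\R)$---to a single a priori elliptic estimate valid on test functions. Fix $t\in[0,T]$ and abbreviate $a=a(\cdot,t)$, $b=b(\cdot,t)$; by Hy1, $a\ge\mu_0>0$, and by Hy2(i), $a,b\in\mathtt{L}^\infty(\R)$. The estimate I aim for is
\[
\|\phi\|_{\mathtt{H}^2(\R)}\le C\big(\|\phi\|_{\mathtt{L}^2(\R)}+\|L(t)\phi\|_{\mathtt{L}^2(\R)}\big),\qquad \phi\in C_0^\infty(\R),
\]
with $C=C(\mu_0,\|b\|_{\mathtt{L}^\infty(\R)})$ independent of $t$.

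To establish it I would exploit the one-dimensional structure and solve algebraically for the second derivative. Setting $\psi:=L(t)\phi=-a\phi''+b\phi'$ and dividing by $a\ge\mu_0$,
\[
\|\phi''\|_{\mathtt{L}^2(\R)}\le\frac1{\mu_0}\big(\|b\|_{\mathtt{L}^\infty(\R)}\,\|\phi'\|_{\mathtt{L}^2(\R)}+\|\psi\|_{\mathtt{L}^2(\R)}\big).
\]
The first-order term is controlled by the interpolation inequality $\|\phi'\|_{\mathtt{L}^2(\R)}^2\le\|\phi\|_{\mathtt{L}^2(\R)}\|\phi''\|_{\mathtt{L}^2(\R)}$ (integration by parts and Cauchy--Schwarz); combining this with the elementary bound $\sqrt{cs}\le\tfrac12(\delta s+\delta^{-1}c)$ and choosing $\delta$ small enough to move the $\|\phi''\|$-term to the left absorbs it and yields the displayed estimate after restoring $\|\phi\|_{\mathtt{L}^2(\R)}$ and $\|\phi'\|_{\mathtt{L}^2(\R)}$.

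Closability then follows directly: if $\phi_n\to0$ in $\mathtt{L}^2(\R)$ and $L(t)\phi_n\to\psi$ in $\mathtt{L}^2(\R)$, the estimate applied to $\phi_n-\phi_m$ shows $\{\phi_n\}$ is Cauchy, hence $\phi_n\to0$, in $\mathtt{H}^2(\R)$; since $L(t)\colon\mathtt{H}^2(\R)\to\mathtt{L}^2(\R)$ is bounded (because $a,b\in\mathtt{L}^\infty(\R)$), $L(t)\phi_n\to0$, forcing $\psi=0$. (Alternatively, closability is immediate from the fact that the formal adjoint $\ell\psi=-a\psi''-(2a_x+b)\psi'-(a_{xx}+b_x)\psi$ carries $C_0^\infty(\R)$ into $\mathtt{L}^2(\R)$ by Hy2(i), so $D(L(t)^*)$ is dense.) Write $A(t)$ for the closure. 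For the domain, the inclusion $\mathtt{H}^2(\R)\subseteq D(A(t))$ comes from approximating $u\in\mathtt{H}^2(\R)$ by $\phi_n\in C_0^\infty(\R)$ in $\mathtt{H}^2(\R)$, whence $\phi_n\to u$ and $L(t)\phi_n\to-au''+bu'$ in $\mathtt{L}^2(\R)$, so $u\in D(A(t))$ with $A(t)u=-au''+bu'$. The reverse inclusion follows by applying the estimate to $\phi_n-\phi_m$ for an approximating sequence of a given $u\in D(A(t))$: it is Cauchy in $\mathtt{H}^2(\R)$, and its $\mathtt{H}^2$-limit coincides with the $\mathtt{L}^2$-limit $u$, so $u\in\mathtt{H}^2(\R)$. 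Hence $D(A(t))=\mathtt{H}^2(\R)$ for every $t$, and independence of $t$ is automatic.

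I expect the reverse inclusion $D(A(t))\subseteq\mathtt{H}^2(\R)$, i.e.\ the a priori estimate, to be the only real obstacle. In several space dimensions this is genuine elliptic regularity and would require Calder\'on--Zygmund or G\aa rding-type machinery, but in one dimension it is elementary because $\phi''$ is isolated algebraically and the interpolation inequality absorbs the first-order remainder. The one delicate bookkeeping point is keeping $C$ uniform in $t$, which is guaranteed by the $t$-uniform bounds $\mu_0\le a$ and $\|b\|_{\mathtt{L}^\infty(\Omega_T)}<\infty$ furnished by Hy1 and Hy2(i).
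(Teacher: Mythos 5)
Your proposal is correct, but it takes a genuinely different route from the paper. The paper splits $L(t)=B_0(t)+B_1(t)$ with $B_0(t)\phi=-a(\cdot,t)\phi''$ and $B_1(t)\phi=b(\cdot,t)\phi'$, identifies the closures $A_0(t)$, $A_1(t)$ with domains $\mathtt{H}^2(\R)$, $\mathtt{H}^1(\R)$, and proves the estimate $\|A_1(t)\phi\|_{\mathtt{L}^2(\R)}\leq \frac{\|b\|_\infty}{2\sqrt{\mu_0}}\big(\lambda^{-1/2}\|A_0(t)\phi\|_{\mathtt{L}^2(\R)}+\lambda^{1/2}\|\phi\|_{\mathtt{L}^2(\R)}\big)$ via the resolvent bound for $\partial_x(-\mu_0\partial_x^2+\lambda)^{-1}$, so that $A_1(t)$ is $A_0(t)$-bounded with relative bound smaller than $1$; Kato's perturbation theorem \cite[Ch.~IV, Theorem~1.1]{Kato2} then gives that $A(t)=A_0(t)+A_1(t)$ is closed with $D(A(t))=D(A_0(t))=\mathtt{H}^2(\R)$. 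You instead prove directly the a priori bound $\|\phi\|_{\mathtt{H}^2(\R)}\leq C\big(\|\phi\|_{\mathtt{L}^2(\R)}+\|L(t)\phi\|_{\mathtt{L}^2(\R)}\big)$ on $C_0^\infty(\R)$ by isolating $\phi''$ algebraically (possible in one dimension since $a\geq\mu_0>0$) and absorbing the first-order term through the interpolation inequality $\|\phi'\|^2_{\mathtt{L}^2(\R)}\leq\|\phi\|_{\mathtt{L}^2(\R)}\|\phi''\|_{\mathtt{L}^2(\R)}$ plus Young's inequality; combined with the boundedness of $L(t)\colon\mathtt{H}^2(\R)\to\mathtt{L}^2(\R)$, this yields closability and both inclusions for the domain by routine Cauchy-sequence and density arguments, all of which are carried out correctly. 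In effect your estimate asserts that the graph norm of $L(t)$ is equivalent to the $\mathtt{H}^2$-norm, which is precisely what the paper extracts from perturbation theory; indeed the paper's treatment of $A_0(t)$ already rests on the analogous equivalence $\mu_0\|\phi''\|\leq\|A_0(t)\phi\|\leq\mu_1\|\phi''\|$ in its inequality \eqref{lapla}. What each approach buys: yours is elementary and self-contained, avoids citing the relative-boundedness theorem, and makes the $t$-uniformity of the constant transparent (only $\mu_0$ and $\|b\|_{\mathtt{L}^\infty(\Omega_T)}$ enter); the paper's perturbative argument is more abstract but more portable, since the resolvent estimate has direct higher-dimensional analogues via Fourier multipliers for $-\mu_0\Delta$, whereas, as you note yourself, your algebraic isolation of $\phi''$ is special to one dimension. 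One small observation: your main argument uses only Hy1 and $a,b\in\mathtt{L}^\infty$, so the differentiability in Hy2(i) is needed only for your alternative adjoint-based closability remark (the paper needs it later anyway, in Lemma~\ref{acretivo}).
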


\begin{proof}
We put 
	$$
	L(t) \phi = B_0(t) \phi + B_1(t) \phi,
	$$
	where 
	$$
	B_0(t) \phi = - a(\cdot,\,t)\,\phi^{\prime\prime},\,\,\mbox{and}\,\, 
	B_1(t) \phi =  b(\cdot,\,t)\,\phi^{\prime}, \,\,\, \mbox{for} \,\, \phi \in C^\infty_0(\R),
	$$

	Considering conditions (\ref{A1}) and (\ref{A2}), $B_0(t)$ and $B_1(t)$  are closable in $\mathtt{L}^2(\R)$, the closures
	$A_0(t)$ of $B_0(t)$ and $A_1(t)$ of $B_1(t)$ are independent of $t$ given by
	$$
	D\left(A_0(t)\right)=\left\{ \phi \in \mathtt{L}^2(\R) \colon \phi^{\prime\prime} \in \mathtt{L}^2(\R)\right\} = \mathtt{H}^2(\R),
	$$
	and
	$$
	D\left(A_1(t)\right)=\left\{ \phi \in \mathtt{L}^2(\R) \colon \phi^{\prime} \in \mathtt{L}^2(\R)\right\} = \mathtt{H}^1(\R).
	$$
	We can write
	$$ 
	A_0(t) \phi = -a(\cdot,\,t)\,\phi^{\prime \prime}, \,\, \phi \in \mathtt{H}^2(\R), 
	\,\, \mbox{and} \,\,\,
	A_1(t) \phi = b(\cdot,\,t)\,\phi^{\prime}, \, \, \phi \in \mathtt{H}^1(\R),
	$$
	where $\phi^\prime$ and $\phi^{\prime\prime}$ are in the distribution sense.
	
	We now show that $A_1(t)$ is relatively bounded with respect to $A_0(t)$. In fact, since
	$D\left(A_0(t)\right)\subset D\left(A_1(t)\right)$, if $\phi \in \mathtt{H}^2(\R)$, then 
	for all $\lambda > 0$,
	\begin{eqnarray}\label{gaglia}
	\|A_1(t) \,\phi\|_{\mathtt{L}^2(\R)} & \leq &  \|b\|_{\infty} \|\phi^\prime\|_{\mathtt{L}^2(\R)} \nonumber\\
	&\leq & \|b\|_\infty\,\|\partial_x \left(-\mu_0\,\partial_x^2 +
	\lambda\right)^{-1}\|_{\mathcal{B}(\mathtt{L}^2(\R))}\,\|\left( -\mu_0\,\partial_x^2 + \lambda\right) \phi\|_{\mathtt{L}^2(\R)} \nonumber \\
	& \leq &  \dfrac{\|b\|_\infty}{2 \sqrt{\mu_0}\,\lambda^{1/2}}\,\|\left( -\mu_0\,\partial_x^2 +
	\lambda\right) \phi\|_{\mathtt{L}^2(\R)}  \nonumber\\
	&\leq & \dfrac{\|b\|_\infty}{2 \sqrt{\mu_0}\,\lambda^{1/2}}\,\left( \|-\mu_0\,\phi^{\prime\prime}\|_{\mathtt{L}^2(\R)} +
	\lambda\,\|\phi\|_{\mathtt{L}^2(\R)}\right),
	\end{eqnarray}
	where $\mu_0$ is expressed by (\ref{A1}). Condition (\ref{A1}) implies that
	
	\begin{equation}\label{lapla}
	\|-\mu_0\,\phi^{\prime\prime}\|_{\mathtt{L}^2(\R)} \leq \|A_0(t)\,\phi\|_{\mathtt{L}^2(\R)} \leq \|-\mu_1\,\phi^{\prime\prime}\|_{\mathtt{L}^2(\R)},\;\;\forall \phi \in \mathtt{H}^2(\R).
	\end{equation}
	Substituting (\ref{lapla}) into (\ref{gaglia}), we obtain
	\begin{equation}\label{relati}
	\|A_1(t)\,\phi\|_{\mathtt{L}^2(\R)} \leq  \dfrac{\|b\|_\infty}{2 \sqrt{\mu_0}}\,\left( \dfrac{1}{\lambda^{1/2}}\|A_0(t)\,\phi\|_{\mathtt{L}^2(\R)} +
	\lambda^{1/2}\,\|\phi\|_{\mathtt{L}^2(\R)}\right).
	\end{equation}
	According to \cite[Ch.IV, Theorem 1.1]{Kato2}, estimate  (\ref{relati}) proves that $A_1(t)$ is $A_0(t)$-bound with $A_0(t)$-bound smaller than 1. Further, 
	$A(t) = A_0(t) + A_1(t)$ is closed, and $A(t)$ and $A_0(t)$ have the same domain, $\mathtt{H}^2(\R)$. Thus, $L(t)$ is closable in $\mathtt{L}^2(\R)$, and its closure can be written as 
	\begin{equation}\label{hiperbo}
	A(t) \phi = -a(\cdot,\,t)\,\phi^{\prime\prime} +  b(\cdot,\,t)\,\phi^{\prime}, \;\phi \in \mathtt{H}^2(\R),
	\end{equation}
	where $\phi^\prime,\;\phi^{\prime\prime}$ denotes the distribution sense.
	This completes the proof.
\end{proof}

\begin{lemma}\label{acretivo}
	The operator $A(t)$, defined in (\ref{hiperbo}), is quasi m-accretive in $\mathtt{L}^2(\R)$ and uniformly in $t$. In this case, $A(t) \in G\left(\mathtt{L}^2(\R),\,1,\,\beta\right)$ for some $\beta > 0$.
\end{lemma}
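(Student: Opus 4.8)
The plan is to verify the two conditions characterizing $\mathbb{G}(\mathtt{L}^2(\R),1,\beta)$ recorded just after Definition~\ref{accretive}: the accretivity estimate (i) $\langle A(t)\phi,\phi\rangle \geq -\beta\|\phi\|_{\mathtt{L}^2(\R)}^2$ and the range condition (ii) that $A(t)+\lambda$ is onto for some $\lambda>\beta$. The key point for the claimed uniformity is that the relevant constants, namely $\mu_0$ from Hy1 and $\|a_x\|_{\infty},\|b\|_{\infty}$ from Hy2, are uniform in $t$ over $\Omega_T$; this is exactly what will render $\beta$ (and the admissible threshold for $\lambda$) independent of $t$.

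First I would establish (i) by integration by parts. For $\phi\in C_0^\infty(\R)$, and then for $\phi\in\mathtt{H}^2(\R)$ by density, the boundary terms vanish and a direct computation gives
\[
\langle A(t)\phi,\phi\rangle = \int_\R a\,|\phi'|^2\,dx + \int_\R (a_x+b)\,\phi'\,\overline{\phi}\,dx .
\]
The first integral is real and bounded below by $\mu_0\|\phi'\|_{\mathtt{L}^2(\R)}^2$ using \eqref{A1}, while the real part of the second is controlled by $\|a_x+b\|_\infty\,\|\phi'\|_{\mathtt{L}^2(\R)}\|\phi\|_{\mathtt{L}^2(\R)}$. Minimizing $\mu_0 s^2 - \|a_x+b\|_\infty\,s\,\|\phi\|_{\mathtt{L}^2(\R)}$ over $s=\|\phi'\|_{\mathtt{L}^2(\R)}$ then yields
\[
\mathrm{Re}\,\langle A(t)\phi,\phi\rangle \geq -\tfrac{\|a_x+b\|_\infty^2}{4\mu_0}\,\|\phi\|_{\mathtt{L}^2(\R)}^2 =: -\beta\,\|\phi\|_{\mathtt{L}^2(\R)}^2 ,
\]
with $\beta$ independent of $t$ since every quantity involved is uniform over $\Omega_T$.

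For (ii) I would recast $A(t)+\lambda$ variationally. Moving one derivative onto the test function, associate with $A(t)+\lambda$ the sesquilinear form on $\mathtt{H}^1(\R)$,
\[
\mathfrak{a}_\lambda(\phi,\psi) = \int_\R a\,\phi'\,\overline{\psi'}\,dx + \int_\R (a_x+b)\,\phi'\,\overline{\psi}\,dx + \lambda\int_\R \phi\,\overline{\psi}\,dx .
\]
By Hy2(i) this form is bounded on $\mathtt{H}^1(\R)$, and the same Young's inequality used for (i) shows that for $\lambda$ large enough, uniformly in $t$ and in particular with $\lambda>\beta$, it is coercive, $\mathrm{Re}\,\mathfrak{a}_\lambda(\phi,\phi)\geq \tfrac{\mu_0}{2}\|\phi\|_{\mathtt{H}^1(\R)}^2$. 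The Lax--Milgram theorem then furnishes, for each $g\in\mathtt{L}^2(\R)\subset \mathtt{H}^1(\R)^*$, a unique $\phi\in\mathtt{H}^1(\R)$ with $\mathfrak{a}_\lambda(\phi,\psi)=\langle g,\psi\rangle$ for all $\psi\in\mathtt{H}^1(\R)$.

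Finally I would upgrade the regularity: testing against $\psi\in C_0^\infty(\R)$ shows that $\phi$ solves $-a\phi''+b\phi'+\lambda\phi=g$ in the distributional sense, whence $a\phi''=b\phi'+\lambda\phi-g\in\mathtt{L}^2(\R)$; since $a\geq\mu_0>0$ gives $1/a\in\mathtt{L}^\infty(\R)$, it follows that $\phi''\in\mathtt{L}^2(\R)$, so $\phi\in\mathtt{H}^2(\R)=D(A(t))$ and $(A(t)+\lambda)\phi=g$. Thus $A(t)+\lambda$ is onto, (ii) holds, and since $\lambda$ may be fixed above $\beta$ independently of $t$ we conclude $A(t)\in\mathbb{G}(\mathtt{L}^2(\R),1,\beta)$ uniformly in $t$. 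I expect the surjectivity in (ii) to be the main obstacle: estimate (i) is essentially a one-line integration by parts, whereas the range condition requires the correct coercive form on $\mathtt{H}^1(\R)$ together with the regularity bootstrap from the weak $\mathtt{H}^1$ solution to a strong $\mathtt{H}^2$ solution---though the relative boundedness with $A_0(t)$-bound smaller than $1$ already obtained in Lemma~\ref{closure} offers an equivalent perturbation route to the same conclusion.
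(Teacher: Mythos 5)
Your proof is correct, but it reaches the range condition by a genuinely different route than the paper. For the accretivity estimate (i) both arguments integrate by parts, yet you stop after one integration and absorb the cross term $\int (a_x+b)\,\phi'\phi$ via Young's inequality against the elliptic term, obtaining $\beta = \|a_x+b\|_\infty^2/(4\mu_0)$; the paper integrates by parts a second time, discards the nonnegative term $\int a(\phi')^2$, and gets the explicit constant $\beta = \tfrac12\left(\|a_{xx}\|_{\mathtt{L}^\infty(\Omega_T)} + \|b_x\|_{\mathtt{L}^\infty(\Omega_T)}\right)$ of \eqref{acret}, which is reused downstream (in \eqref{M} and in Lemma~\ref{regularidade}) --- both constants are uniform in $t$, which is all the lemma itself requires. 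For surjectivity (ii) the paper argues abstractly: the estimate in (i) together with the closedness of $A(t)$ (Lemma~\ref{closure}) show that $A(t)+\lambda$ has closed range for $\lambda > \beta$, so ontoness reduces to density of the range, proved by showing $\mathrm{Ker}\left(A^*(t)+\lambda\right) = \{0\}$: one writes the adjoint $S(t)\psi = -(a\psi)_{xx} - (b\psi)_x$, multiplies the kernel equation by $\phi$, and applies the same accretivity bound to force $(\lambda-\beta)\|\phi\|_{\mathtt{L}^2(\R)}^2 \leq 0$. You instead solve $(A(t)+\lambda)\phi = g$ directly by Lax--Milgram on the coercive form $\mathfrak{a}_\lambda$ over $\mathtt{H}^1(\R)$ and then bootstrap regularity. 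The trade-off: the paper's duality argument needs no elliptic regularity but must identify the adjoint and its domain $D(A^*(t)) = \mathtt{H}^2(\R)$ (a step the paper states rather tersely), while your variational route is constructive and self-contained but owes one more line in the bootstrap --- from the weak formulation you first get $(a\phi')' \in \mathtt{L}^2(\R)$, hence $a\phi' \in \mathtt{H}^1(\R)$, and then $\phi' = (1/a)(a\phi') \in \mathtt{H}^1(\R)$ because $a \geq \mu_0$ and $a_x \in \mathtt{L}^\infty$ make $1/a$ a bounded Lipschitz multiplier; the shorthand $a\phi'' = b\phi' + \lambda\phi - g \in \mathtt{L}^2(\R)$ presupposes exactly this. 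Your closing remark that the relative bound in \eqref{relati} offers a perturbation route is also sound, since that bound can be made arbitrarily small by taking $\lambda$ large and $b\,\partial_x$ is itself quasi-accretive.
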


\begin{proof}
	Because we deal with the Hilbert space $\mathtt{L}^2(\R)$, we must show that a 
	constant $\beta > 0$ exists such that
	\begin{itemize}
		\item [(i)] $\langle A(t) \phi, \phi\rangle_{\mathtt{L}^2(\R)} \geq - \beta \| \phi \|^2_{\mathtt{L}^2(\R)},\;\;\forall \phi \in D\left(A(t)\right)=\mathtt{H}^2(\R)$.
		\item [(ii)] $\left( A(t) + \lambda \right)$ is onto $\mathtt{L}^2(\R)$ for some (equivalently for all) $\lambda > \beta$.
	\end{itemize}
	Let $\phi \in \mathtt{H}^2(\R)$; then,
	\begin{align*}
	\langle A(t) &\phi, \phi \rangle_{\mathtt{L}^2(\R)} = \langle{-a(\cdot,\,t) \phi^{\prime\prime}}, \,{\phi}\rangle_{\mathtt{L}^2(\R)} + \langle{b(\cdot,\,t) \phi^\prime}, \,{\phi}\rangle_{\mathtt{L}^2(\R)} \\
	& = -\int_\R a(x,\,t)\,\phi^{\prime\prime}(x) \phi(x) \,dx 
	+ \int_\R b(x,\,t)\,\phi^{\prime}(x) \phi(x) \,dx \\
	& = \int_\R a_x(x,\,t) \phi(x) \phi^\prime(x)\,dx 
	 + \int_\R a(x,\,t) \left(\phi^\prime\right)^2\,dx \\
	& - \dfrac{1}{2} \int_\R b_x(x,\,t) \phi^2(x)\,dx \\
	&	= -\dfrac{1}{2} \int_\R a_{xx}(x,\,t) \phi^2(x)\,dx + \int_\R a(x,\,t) \left(\phi^\prime(x)\right)^2\,dx \\
	&   - \dfrac{1}{2} \int_\R b_x(x,\,t) \phi^2(x)\,dx 
	\geq   -\dfrac{1}{2} \left(\| a_{xx}(\cdot,\,t)\|_\infty + \|b_x(\cdot,\,t)\|_\infty\right)\, \langle{\phi},\,{\phi}\rangle.
	\end{align*}
	Subsequently, from (\ref{A1}) and (\ref{A2}), it follows that
	\begin{equation}\label{acret}
	\beta =\dfrac{1}{2} \left(\| a_{xx}\|_{\mathtt{L}^\infty(\Omega_T)} + \|b_x\|_{\mathtt{L}^\infty(\Omega_T)}\right),
	\end{equation}
	is independent of $t$, proving that $(i)$. 
	
	To prove $(ii)$, the fact that $A(t)$ is a closed operator combined with the inequality in $(i)$
	shows that $\left(A(t) + \lambda \right)$ has a closed range for all $\lambda > \beta$. Thus, it is sufficient to prove that
	$\left(A(t) + \lambda \right)$ has a dense range for $\lambda > \beta$  and sufficient to demonstrate that $R\left( A(t) + \lambda\right)^\bot = \left\{ 0\right\}$.
	
	Let $\phi \in \mathtt{L}^2(\R)$ such that
	$\langle{ \left(A(t) + \lambda \right) \psi}, \,{\phi}\rangle_{\mathtt{L}^2(\R)} = 0$ for all $\psi \in D\left(A(t)\right)$.
	
	Therefore, from the definition of the adjoint operator and property of $A(t)$, we obtain
	\begin{equation}\label{adjun}
	S(t) \psi = - \left(a(\cdot,\,t)\,\psi\right)_{xx} - \left(b(\cdot,\,t)\,\psi\right)_x,
	\end{equation}
	where $D(S(t))=C_0^\infty(\mathbb R)$ is the core of the adjoint operators of $A(t)$ and $D\left(A^*(t)\right)= \mathtt{H}^2(\R)$.
	Let $\phi \in Ker\left( A^*(t) + \lambda\right)$. Then,
	multiplying equation $- \left(a(x,\,t)\,\phi\right)_{xx} - \left(b(x,\,t)\,\phi\right)_x + \lambda\, \phi= 0$ by $\phi$, integrating by parts, and applying the inequality in $(i)$, we obtain
	\begin{eqnarray*}
		0 &=& \langle{\left(a(\cdot,\,t)\,\phi\right)_{xx} - \left(b(\cdot,\,t)\,\phi\right)_x + \lambda \phi}, \,{\phi}\rangle_{\mathtt{L}^2(\R)}=   \\
		&=&  \langle{\phi}, \,{A(t)\phi}\rangle_{\mathtt{L}^2(\R)} + \lambda \,\|\phi\|_{\mathtt{L}^2(\R)}^2 \\
		&\geq &  \left(\lambda - \beta \right)\, \|\phi\|_{\mathtt{L}^2(\R)}^2
	\end{eqnarray*}
	for all $\lambda > \beta$. Because $\left(\lambda - \beta\right) > 0$, we conclude that 
	$Ker( A^*(t) + \lambda) = R ( (A(t) + \lambda)^\bot = \{ 0 \}$, and the proof is finished.
\end{proof}

Let the homogeneous equation be
\begin{equation}\label{homog}
\partial_{ t} u + A(t) u = 0,
\end{equation}
where $A(t)$ is defined as in (\ref{hiperbo}).

We now state the existence of an evolution operator for (\ref{homog}).
\begin{lemma}\label{propagador}
	We assume that Hy1 and Hy2 are satisfied. Then, there exists a unique family of evolution operator $U(t, t')$, defined by
	$$
	(t,\,t^\prime) \in \triangle = \left\{(t,\,t^\prime)\colon 0 \leq t^\prime \leq t \leq T\right\}\longmapsto,
	U(t,\,t^\prime) \in \mathcal{B}\left(\mathtt{L}^2(\R)\right),
	$$
	for any fixed $T > 0$ such that
	\begin{itemize}
		\item [(i)] $(t,\,t^\prime) \longmapsto U(t,\,t^\prime) \in \mathcal{B}\left(\mathtt{L}^2(\R)\right)$ is strongly continuous
		and $U(t,\,t) = I$ for all $t \in [0,\,T]$;
		\item [(ii)] $U(t,\,t^{\prime\prime})= U(t,\,t^\prime)\,U(t^\prime,\,t^{\prime\prime})$ for all $t$, $t^\prime$,
		$t^{\prime\prime}$ such that $ 0 \leq t^{\prime\prime} \leq t^\prime \leq t \leq T$;
		\item [(iii)] $U(t,\,t^\prime)\left(\mathtt{H}^2(\R)\right) \subset \mathtt{H}^2(\R^2)$ and $(t,\,t^\prime) \longmapsto
		U(t,\,t^\prime) \in \mathcal{B}\left(\mathtt{H}^2(\R)\right)$ is strongly continuous in $\mathtt{H}^2(\R)$;
		\item [(iv)] $\displaystyle\partial_t U(t,\,t^\prime)\,\phi = - A(t)\,U(t,\,t^\prime)\,\phi$, \; $\forall \phi
		\in \mathtt{H}^2(\R)$, \; $0 \leq t \leq t^\prime \leq T$;
		\item [(v)]  $\displaystyle\partial_s U(t,\,s)\,\phi = U(t,\,s)\, A(s)\,\phi$, \, $\forall \phi \in \mathtt{H}^2(\R)$, $0\leq s \leq t \leq T$.
	\end{itemize}
\end{lemma}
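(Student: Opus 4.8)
The plan is to recognize the triplet $\{A;\, \mathtt{L}^2(\R),\, \mathtt{H}^2(\R)\}$ as a CD-\emph{system} in the sense of Definition~\ref{CD} and then quote Kato's construction in Lemma~\ref{evol-operator} to produce $U(t,\,t')$ together with all of its properties. Accordingly I would set $X = \mathtt{L}^2(\R)$ and $Y = \mathtt{H}^2(\R)$ and verify the three conditions of Definition~\ref{CD} one at a time. Condition~(ii) is immediate: Lemma~\ref{closure} gives $D\left(A(t)\right) = \mathtt{H}^2(\R) = Y$ independently of $t$, while $\mathtt{H}^2(\R)$ embeds continuously and densely in $\mathtt{L}^2(\R)$ by the standard Sobolev theory. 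Condition~(i), the stability of the family with $M = 1$, follows from Lemma~\ref{acretivo}, which yields $A(t) \in \mathbb{G}\left(\mathtt{L}^2(\R),\,1,\,\beta\right)$ with $\beta$ independent of $t$, together with the fact recorded just after Definition~\ref{accretive} that a uniformly quasi m-accretive family in a Hilbert space is stable.

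The genuine work lies in condition~(iii), the strong integral representation $A(t)\phi = A(0)\phi + \int_0^t G(s)\phi\,ds$ for $\phi \in Y$. I would take as candidate derivative operator
\begin{equation*}
G(s)\phi = -\tilde{a}(\cdot,\,s)\,\phi^{\prime\prime} + \tilde{b}(\cdot,\,s)\,\phi^{\prime}, \quad \phi \in \mathtt{H}^2(\R),
\end{equation*}
using the measurable functions $\tilde{a},\,\tilde{b}$ supplied by Hy2(ii). Since $\tilde{a}(\cdot,\,s),\,\tilde{b}(\cdot,\,s) \in \mathtt{L}^\infty(\R)$ uniformly in $s$, for each fixed $\phi$ the map $s \mapsto G(s)\phi$ takes values in $\mathtt{L}^2(\R)$ and satisfies $\|G(s)\phi\|_{\mathtt{L}^2(\R)} \le \left(\|\tilde{a}\|_{\infty} + \|\tilde{b}\|_{\infty}\right)\|\phi\|_{\mathtt{H}^2(\R)}$, so $\|G(s)\|_{\mathcal{B}(Y,\,X)}$ is essentially bounded; strong measurability of $s \mapsto G(s)\phi$ is inherited from the measurability of $\tilde{a},\,\tilde{b}$ in $s$. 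To obtain the integral identity I would test against an arbitrary $\psi \in \mathtt{L}^2(\R)$, substitute the pointwise-in-$x$ representation of the coefficients from Hy2(ii) under the spatial integral, and interchange the $x$- and $s$-integrations by Fubini, which is justified by the uniform $\mathtt{L}^\infty$ bounds together with $\phi^{\prime},\,\phi^{\prime\prime} \in \mathtt{L}^2(\R)$. This produces $\langle A(t)\phi,\,\psi\rangle = \langle A(0)\phi,\,\psi\rangle + \int_0^t \langle G(s)\phi,\,\psi\rangle\,ds$ for every $\psi$, which is exactly the weak form of the required Bochner identity, so that $A \in \mathrm{Lip}_*\left(I,\,\mathcal{B}(Y,\,X)\right)$ with $\partial_t A = G$.

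With the three conditions in place, Lemma~\ref{evol-operator} delivers an evolution operator $U(t,\,s)$ on the triangle $\triangle$, and I would read off the asserted properties directly: property~(i) and the evolution relation~(ii) are Lemma~\ref{evol-operator}(i)--(ii) specialized to $X = \mathtt{L}^2(\R)$; the $\mathtt{H}^2(\R)$-invariance and strong continuity in~(iii) are the $\mathcal{B}(Y)$-part of Lemma~\ref{evol-operator}(i); and the two differential identities~(iv)--(v) are precisely Lemma~\ref{evol-operator}(iii). Uniqueness is inherited from the uniqueness of the evolution operator attached to a CD-\emph{system} in Kato's theory. I expect the only delicate point to be the Fubini interchange underlying condition~(iii): one must ensure that the strong (Bochner) integral of $s \mapsto G(s)\phi$ coincides with the pointwise-in-$x$ primitive of the coefficients, and the uniform-in-$s$ $\mathtt{L}^\infty$ bounds from Hy2(ii) are exactly what make this step rigorous.
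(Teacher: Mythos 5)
Your proposal is correct and follows essentially the same route as the paper: the paper's proof likewise verifies that $\big\{A(t);\,\mathtt{L}^2(\R),\,\mathtt{H}^2(\R)\big\}$ is a CD-\emph{system}---conditions (i) and (ii) of Definition~\ref{CD} from Lemmas~\ref{closure} and~\ref{acretivo}, and condition (iii) with exactly the same choice $G(t)\phi = -\widetilde{a}(\cdot,t)\,\phi^{\prime\prime} + \widetilde{b}(\cdot,t)\,\phi^{\prime}$---and then invokes Lemma~\ref{evol-operator} to obtain $U(t,\,t^\prime)$ with properties (i)--(v). Your Fubini/weak-form justification of the strong integral identity $A(t)\phi = A(0)\phi + \int_0^t G(s)\phi\,ds$ is a detail the paper asserts without proof, so your write-up is, if anything, slightly more complete on that point.
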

\begin{proof}

	From Lemma~\ref{evol-operator}, it is sufficient to prove that triplet 
	$\big\{A(t);\,\mathtt{L}^2(\R),$ $\mathtt{H}^2(\R)\big\}$ with the family
	$A(t)$ defined in (\ref{hiperbo}), is a CD-{\it system}.  Lemma~\ref{closure} and Lemma \ref{acretivo} imply the conditions $(i)$ and $(ii)$ of Definition~(\ref{CD}). Condition (iii)
	of this definition remains to be proven.	
	Thus, let the family $\left\{ G(t) \colon t \in [0,\,T]\right\}$ be defined as
	$$
	G(t)\phi = -\widetilde{a}(\cdot,t) \phi^{\prime\prime} + \widetilde{b}(\cdot,t) \phi^\prime,\;\;\phi \in \mathtt{H}^2(\R),
	$$
	where $\widetilde{a}$ and $\widetilde{b}$ are given by Hy2 (ii). We otain
	$G(t) \in \mathcal{B}(\mathtt{H}^2(\R),$ $\mathtt{L}^2(\R))$ for a.e. $t \in [0,\,T]$, is strongly measurable. 
	$\|G(t)\|_{\mathcal{B}\left(\mathtt{H}^2(\R),\,\mathtt{L}^2(\R)\right)}$ is essentially bounded by $t$.
	$$
	A(t) \phi = A(0) \phi + \int_0^t\,G(s) \phi \,ds,\;\mathrm{for}\;\;\phi\in \mathtt{H}^2(\R),
	$$
	This completes the proof.
\end{proof}

Note that, if the source function $f$ does not depend on $u$, then equation \eqref{scalar-local-eq} becames linear as
\begin{equation}\label{eq-linear}
\partial_t u - a(x,\, t)\, u_{xx} + b(x,\,t)\,u_x = f(x,\,t),\,\,x \in \R, \,\, t \in (0, T].
\end{equation}

In this case, if $u$ is a solution to \eqref{eq-linear} with the initial condition,
$u(x, 0) = \phi(x)$, $x \in \R$, then it is easy to see that
\begin{equation}\label{linearnohomo}
u(x, t) = U(t,\,0) \,\phi(x) + \int_0^t\,U(t,\,s)\,f(x, s)\,\mathrm{ds},
\end{equation}
where $U(t,\,s)$ is the evolution operator associated with $A(t)$ defined in (\ref{hiperbo}).
We  call  $u$ given by (\ref{linearnohomo}) a {\it mild solution} of (\ref{eq-linear}).

The proof of the following theorem is analogous to that in \cite[Theorem~2]{Kato4}. 
\begin{theorem}\label{Linear}
	Let Hy1 and Hy2 be satisfied.  If $\phi \in \mathtt{H}^2(\R)$, then
	$f \in C\left([0,\,T],\,\mathtt{L}^2(\R)\right) \cap \mathtt{L}^1\left([0,\,T],\,\mathtt{H}^2(\R)\right)$,
	then  $u$  given by (\ref{linearnohomo}) is a unique solution
	$u\in C\left([0,\,T],\,\mathtt{H}^2(\R)\right) \cap C^1\left([0,\,T],\,\mathtt{L}^2(\R)\right)$ in (\ref{eq-linear}), with $u(x, 0) = \phi(x)$.
	
	Furthermore, we obtain the estimates
	\begin{eqnarray}
	\| u\|_{\infty,\,\mathtt{L}^2(\R)} & \leq & \| U\|_{\infty,\,\mathtt{L}^2(\R)} \,\left( \|\phi\|_{\mathtt{L}^2(\R)} +
	\| f \|_{1,\,\mathtt{L}^2(\R)} \right),\label{estimate1} \nonumber \\
	\| u\|_{\infty,\,\mathtt{H}^2(\R)}  & \leq & \| U\|_{\infty,\,\mathtt{H}^2(\R)} \,\left( \|\phi\|_{\mathtt{H}^2(\R)} +
	\| f \|_{1,\,\mathtt{H}^2(\R)} \right), \label{estimate2}\\
	\|\partial_t u\|_{\infty,\,\mathtt{L}^2(\R)}  & \leq & \| f\|_{\infty,\,\mathtt{L}^2(\R)}  +
	\|A\|_{\infty,\,\mathcal{B}(\mathtt{H}^2(\R),\,\mathtt{L}^2(\R))}\,
	\left( \|\phi\|_{\mathtt{H}^2(\R)} + \right.\nonumber\\
	& & \left. \| f \|_{1,\,\mathtt{H}^2(\R)} \right). \nonumber
	\end{eqnarray}
\end{theorem}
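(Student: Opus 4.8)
The plan is to exploit the CD-\emph{system} structure $\{A(t);\mathtt{L}^2(\R),\mathtt{H}^2(\R)\}$ established in the proof of Lemma~\ref{propagador} and to follow Kato's treatment of the inhomogeneous linear problem, verifying directly that the mild solution (\ref{linearnohomo}) is in fact a classical solution. First I would record the two building blocks that are available: the evolution operator $U(t,s)$ with properties (i)--(v) of Lemma~\ref{propagador}, and the quasi-accretivity bound $\langle A(t)\phi,\phi\rangle\ge-\beta\|\phi\|_{\mathtt{L}^2(\R)}^2$ from Lemma~\ref{acretivo}. The argument then splits into existence/regularity, uniqueness, and the three estimates.

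For regularity I would first show $u\in C([0,T],\mathtt{H}^2(\R))$. The term $U(t,0)\phi$ lies there because $\phi\in\mathtt{H}^2(\R)$ and $U$ is strongly continuous on $\mathtt{H}^2(\R)$ by property (iii). For the Duhamel integral, since $f\in\mathtt{L}^1([0,T],\mathtt{H}^2(\R))$ and $U(t,s)$ maps $\mathtt{H}^2(\R)$ into itself boundedly, the Bochner integral $\int_0^t U(t,s)f(s)\,ds$ is well defined in $\mathtt{H}^2(\R)$, and its continuity in $t$ follows from the strong continuity of $U$ together with dominated convergence. Next I would differentiate (\ref{linearnohomo}) in $t$: applying property (iv) to the first term, and the Leibniz rule together with property (iv) to the integral, gives
\[u_t(t)=-A(t)U(t,0)\phi+f(t)-\int_0^t A(t)U(t,s)f(s)\,ds=-A(t)u(t)+f(t),\]
which is precisely (\ref{eq-linear}). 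Since $f\in C([0,T],\mathtt{L}^2(\R))$ and $t\mapsto A(t)u(t)$ is continuous in $\mathtt{L}^2(\R)$ (the coefficients $a,b$ are continuous in $t$ by Hy2(ii) and $u$ is continuous in $\mathtt{H}^2(\R)$), we conclude $u\in C^1([0,T],\mathtt{L}^2(\R))$.

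For uniqueness I would use the standard Kato device: given any solution $u$ in the stated class, the map $s\mapsto U(t,s)u(s)$ is differentiable on $[0,t]$ with, by properties (iv)--(v),
\[\frac{d}{ds}\big[U(t,s)u(s)\big]=U(t,s)A(s)u(s)+U(t,s)\big(f(s)-A(s)u(s)\big)=U(t,s)f(s).\]
Integrating from $0$ to $t$ and using $U(t,t)=I$ recovers exactly (\ref{linearnohomo}), so the solution is unique; alternatively, the difference $w$ of two solutions satisfies $\tfrac{d}{dt}\|w\|_{\mathtt{L}^2(\R)}^2=-2\langle A(t)w,w\rangle\le 2\beta\|w\|_{\mathtt{L}^2(\R)}^2$ with $w(0)=0$, and Gronwall forces $w\equiv 0$. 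The three estimates are then immediate: the first two follow by taking $\mathtt{L}^2(\R)$- and $\mathtt{H}^2(\R)$-norms in (\ref{linearnohomo}) and bounding $\|U(t,s)\|$ by its supremum over $\triangle$, while the third follows from $u_t=f-A(t)u$ and the $\mathtt{H}^2(\R)$ estimate.

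The main obstacle I expect is the justification of the term-by-term differentiation of the Duhamel integral under the weak hypothesis $f\in\mathtt{L}^1([0,T],\mathtt{H}^2(\R))$ rather than $f\in\mathrm{Lip}([0,T],\mathtt{L}^2(\R))$ as required in Theorem~\ref{TeoKato1}: the interchange of $A(t)$ with the Bochner integral and the differentiability of $t\mapsto\int_0^t U(t,s)f(s)\,ds$ as an $\mathtt{L}^2(\R)$-valued map must be argued carefully. The clean route is to establish the identity first for Lipschitz $f$ (where Theorem~\ref{TeoKato1} applies verbatim), and then approximate a general $f\in C([0,T],\mathtt{L}^2(\R))\cap\mathtt{L}^1([0,T],\mathtt{H}^2(\R))$ by Lipschitz functions, passing to the limit with the help of the a priori estimates above.
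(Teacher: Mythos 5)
Your proposal is correct and takes essentially the same route as the paper, whose entire ``proof'' is the remark that the argument is analogous to \cite[Theorem~2]{Kato4}: namely, the Duhamel representation \eqref{linearnohomo} via the evolution operator of Lemma~\ref{propagador}, term-by-term differentiation to verify \eqref{eq-linear}, the $\frac{d}{ds}\,U(t,s)u(s)$ device for uniqueness, and the norm estimates read off from the integral formula. Your concluding device --- proving the identity first for $f\in\mathrm{Lip}([0,T],\mathtt{L}^2(\R))$ via Theorem~\ref{TeoKato1} and then passing to the limit for $f\in C([0,T],\mathtt{L}^2(\R))\cap\mathtt{L}^1([0,T],\mathtt{H}^2(\R))$ using the a priori bounds --- is exactly how the weaker hypothesis on $f$ is accommodated in Kato's argument, so no gap remains.
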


In the proof of the main theorem in this section, two simple lemmas are used, as shown in  \cite{Kato4}.

Let $X$ and $Y$ be reflexive Banach spaces such that $Y \subset X$ is continuously and densely embedded.
\begin{lemma}\label{closed}
	If a subset of $Y$ is convex, closed, or bounded, then it is also closed in $X$.
\end{lemma}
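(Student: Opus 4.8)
The plan is to prove the statement under the natural reading that the subset $B \subset Y$ is simultaneously convex, closed, and bounded in $Y$ (the "or" being understood as a conjunction, since a merely closed or merely bounded subset of $Y$ need not be closed in the coarser topology of $X$). The strategy rests on the sequential characterization of closedness in the metric space $X$, combined with the reflexivity of $Y$ and the continuity of the embedding $Y \hookrightarrow X$.

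First I would take an arbitrary sequence $\{x_k\} \subset B$ converging strongly to some $x \in X$, with the goal of showing $x \in B$. Since $B$ is bounded in $Y$ and $Y$ is reflexive, the sequence $\{x_k\}$ admits a subsequence $\{x_{k_j}\}$ converging weakly in $Y$ to some limit $y \in Y$. The next key step invokes Mazur's theorem: because $B$ is convex and norm-closed in $Y$, it is weakly closed in $Y$, and therefore the weak limit $y$ belongs to $B$.

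The main technical point, and the step I expect to require the most care, is transferring the weak convergence from $Y$ to $X$. Because the embedding $Y \hookrightarrow X$ is continuous and linear, every $X$-continuous functional restricts to a $Y$-continuous functional; hence $x_{k_j} \rightharpoonup y$ weakly in $Y$ forces $x_{k_j} \rightharpoonup y$ weakly in $X$. On the other hand, the full sequence converges strongly, hence weakly, to $x$ in $X$. By uniqueness of weak limits in $X$, I conclude $x = y$, so that $x \in B$.

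This shows that every $X$-limit of a sequence drawn from $B$ lies in $B$; since $X$ is a normed space and hence metrizable, sequential closedness coincides with closedness, completing the argument. The essential ingredients are the reflexivity of $Y$, used to extract a weakly convergent subsequence, and Mazur's theorem, used to capture the weak limit inside the convex closed set, while the continuity of the embedding is precisely what reconciles the two weak topologies.
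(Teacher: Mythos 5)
Your proof is correct, and your reading of the statement (the subset being simultaneously convex, closed, and bounded in $Y$, despite the ``or'' in the wording) is the intended one. The paper itself gives no proof, deferring to Kato, and the argument behind that citation is exactly yours: extract a weakly convergent subsequence in $Y$ by reflexivity, capture the weak limit in the set via Mazur's theorem, and identify it with the strong $X$-limit through the continuous embedding.
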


\begin{lemma}\label{continuous}
	If a function $g$ on $[0,\,T]$ to $Y$ is bounded in $Y$-norm and continuous in $X$-norm, then $g$ is weakly continuous (hence, strongly measurable) as a $Y$-valued function.
\end{lemma}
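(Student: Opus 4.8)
The plan is to establish the weak continuity of $g$ as a $Y$-valued map first, and then deduce strong measurability as a corollary via the Pettis measurability theorem. Throughout I would write $\iota : Y \hookrightarrow X$ for the given continuous dense embedding and set $C := \sup_{t \in [0,T]} \|g(t)\|_Y < \infty$ for the $Y$-norm bound.

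For the weak continuity I would fix $t_0 \in [0,T]$, take an arbitrary sequence $t_n \to t_0$, and show that $g(t_n) \rightharpoonup g(t_0)$ in $Y$. The sequence $\{g(t_n)\}$ lies in the closed ball of radius $C$ in $Y$; since $Y$ is reflexive, bounded sequences are relatively weakly sequentially compact (Kakutani together with Eberlein–\v{S}mulian), so every subsequence of $\{g(t_n)\}$ admits a further subsequence converging weakly in $Y$ to some $\eta \in Y$. The decisive step is to identify $\eta$: as a bounded linear operator, $\iota$ is weak-to-weak continuous, so $g(t_{n_k}) \rightharpoonup \eta$ also in $X$; on the other hand, the hypothesis of continuity in the $X$-norm gives $g(t_{n_k}) \to g(t_0)$ strongly, hence weakly, in $X$. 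Because weak limits in $X$ are unique (Hahn–Banach separates points), we get $\eta = g(t_0)$. Thus every weakly convergent subsequence has the same limit $g(t_0)$, and the standard subsequence principle upgrades this to $g(t_n) \rightharpoonup g(t_0)$ for the whole sequence. As $t_0$ and $(t_n)$ were arbitrary, $g$ is weakly continuous as a $Y$-valued function.

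For the strong measurability I would invoke Pettis' theorem, checking its two hypotheses. Weak measurability is immediate, since weak continuity makes each scalar map $t \mapsto \langle \ell, g(t) \rangle$, $\ell \in Y^*$, continuous and hence measurable. For the essential separability of the range I would fix a countable dense set $\{t_k\} \subset [0,T]$; by weak continuity every value $g(t)$ is a weak limit of points $g(t_k)$, so $g([0,T])$ is contained in the weak closure of $\{g(t_k)\}_k$. By Mazur's theorem this weak closure lies in the norm-closed convex hull of the countable set $\{g(t_k)\}_k$, which is norm-separable in $Y$. Hence $g$ is separably valued, and Pettis' theorem yields strong measurability.

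The main obstacle is the limit-identification step, where one plays the two topologies against each other: weak-$Y$ convergence of a subsequence versus strong-$X$ convergence of the full sequence, reconciled through the weak-to-weak continuity of $\iota$ and the uniqueness of weak limits in $X$. This is precisely what forces the use of the reflexivity of $Y$ (to extract weak limits) and of the continuity of the embedding $Y \hookrightarrow X$. Once weak continuity is in hand, the measurability corollary is routine, with Mazur's theorem supplying the separability of the range needed for Pettis.
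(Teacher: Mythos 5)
Your proof is correct; the paper itself states Lemma~\ref{continuous} without proof, deferring to Kato~\cite{Kato4}, and your argument---reflexivity of $Y$ with Eberlein--\v{S}mulian to extract weak subsequential limits, identification of the limit via the strong $X$-convergence and the weak-to-weak continuity of the embedding $Y \hookrightarrow X$, the subsequence principle, and then Pettis' theorem with Mazur supplying separable-valuedness---is precisely the standard proof found in that source. The limit-identification step you single out is indeed the only nontrivial point, and you handle it correctly.
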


The main result of this section is provided in the next theorem, which follows similar ideas:
as in the proof in \cite[Theorem 6]{Kato4}. For an existence solution in Sobolev spaces using similar arguments, see, for example, \cite{Cunha}.
\begin{theorem} [Local Solution]\label{scalar-local-theo}
	Let Hy1, Hy2, and Hy3 be satisfied.  If $\phi \in W$, then equation
	\eqref{scalar-local-eq} has a unique solution
	\begin{equation}\label{condi}
	u \in C([0,\,T^\prime],\,W') \cap C^1 ( [0,\,T^\prime],\,\mathtt{L}^2(\R)),\;\;\mathrm{with}\;\;u(0) = \phi,
	\end{equation}
	for some $T^\prime$, $ 0 < T^\prime \leq T$, and the same open ball $W'\supset W$,
	which is given explicitly by 
	\begin{align}\label{mild-solution}
	u(t)=U(t,0)\phi+\int_0^t  U(t,\tau)f(u(\tau))d\tau, \,\,\,t \in [0, T'].
	\end{align}
\end{theorem}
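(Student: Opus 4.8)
The plan is to realize the mild solution \eqref{mild-solution} as the unique fixed point of the integral operator
\[
\Phi v(t) = U(t,0)\phi + \int_0^t U(t,\tau)\, f\big(\tau, v(\tau)\big)\, d\tau,
\]
whose fixed points are exactly the functions satisfying \eqref{mild-solution}. Since $\phi \in W$, I would first enlarge $W$ slightly to an open ball $W' \supset W$ of radius $R$ in $\mathtt{H}^2(\R)$ and set up the contraction in the complete metric space
\[
E_{T'} = \Big\{ v \in C\big([0,T'], \mathtt{L}^2(\R)\big) : \sup_{[0,T']} \|v(t)\|_{\mathtt{H}^2(\R)} \leq R, \ \ \sup_{[0,T']}\|v(t) - U(t,0)\phi\|_{\mathtt{L}^2(\R)} \leq M \Big\},
\]
equipped with the metric $d$ induced by the $\mathtt{L}^2$-supremum norm, for constants $R$, $M$ and a time $T' \leq T$ to be fixed. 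Here the $\mathtt{H}^2$-bound keeps each $v(t)$ inside $W'$, so that Hy3 applies to $f(\tau,v(\tau))$; and $E_{T'}$ is a closed subset of $C([0,T'],\mathtt{L}^2(\R))$ because the $\mathtt{H}^2$-ball is convex, closed and bounded, hence closed in $\mathtt{L}^2(\R)$ by Lemma~\ref{closed}.

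Next I would show $\Phi$ is a contraction of $E_{T'}$ into itself for $T'$ small. For the self-map property I use the $\mathtt{H}^2$-boundedness $\|f(\tau,v(\tau))\|_{\mathtt{H}^2} \leq \mu$ from Hy3(i) together with the uniform operator bounds on $U(t,\tau)$ in $\mathcal{B}(\mathtt{L}^2(\R))$ and $\mathcal{B}(\mathtt{H}^2(\R))$ supplied by Lemma~\ref{propagador}; the integral contributions to both $\|\Phi v(t)\|_{\mathtt{H}^2}$ and $\|\Phi v(t)-U(t,0)\phi\|_{\mathtt{L}^2}$ are $O(T')$, so shrinking $T'$ keeps them below $R$ and $M$. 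For the contraction estimate I use the $\mathtt{L}^2$-Lipschitz bound Hy3(iii) and the $\mathtt{L}^2$-boundedness of $U$ to obtain $d(\Phi v, \Phi w) \leq C\,T'\, d(v,w)$, which makes $\Phi$ a contraction once $C\,T' < 1$. The Banach fixed point theorem then produces a unique $u \in E_{T'}$ with $u = \Phi u$.

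The last and most delicate step is to upgrade this fixed point to the regularity asserted in \eqref{condi} and to verify it solves \eqref{scalar-local-eq}. The difficulty is that $u$ is a priori only continuous into $\mathtt{L}^2(\R)$ while merely bounded in $\mathtt{H}^2(\R)$, so I must promote it to $\mathtt{H}^2$-continuity. The bridge is the linear theory of Theorem~\ref{Linear}: setting $g(t) := f(t,u(t))$, continuity of $g$ into $\mathtt{L}^2(\R)$ follows from Hy3(ii)–(iii) and the $\mathtt{L}^2$-continuity of $u$, while $g$ is bounded in $\mathtt{H}^2(\R)$ by Hy3(i); since $u$ is $\mathtt{H}^2$-bounded and $\mathtt{L}^2$-continuous, Lemma~\ref{continuous} gives that $u$, hence $g$, is weakly continuous and therefore strongly measurable as an $\mathtt{H}^2(\R)$-valued map, so $g \in C([0,T'],\mathtt{L}^2(\R)) \cap \mathtt{L}^1([0,T'],\mathtt{H}^2(\R))$. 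Theorem~\ref{Linear} applied to $\partial_t u + A(t)u = g(t)$, $u(0)=\phi$, then shows that the mild solution $\Phi u = u$ is in fact the unique classical solution in $C([0,T'],\mathtt{H}^2(\R)) \cap C^1([0,T'],\mathtt{L}^2(\R))$ and satisfies the equation; uniqueness in the full class follows since any solution is a mild solution and thus lies in $E_{T'}$ after possibly shrinking $T'$. I expect this measurability and weak-continuity argument to be the main obstacle, precisely because the contraction is carried out in the weaker $\mathtt{L}^2$ topology, whereas the solution is required to be continuous in the stronger $\mathtt{H}^2$ norm.
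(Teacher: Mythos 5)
Your proposal is correct and follows essentially the same path as the paper: the same contraction space $E_{T'}$ with the $\mathtt{H}^2$-bound plus $\mathtt{L}^2$-supremum metric, Lemmas~\ref{closed} and~\ref{continuous} for completeness and for the weak continuity/strong measurability of $t\mapsto f(t,u(t))$ in $\mathtt{H}^2(\R)$, and the Banach fixed point theorem, with your explicit appeal to Theorem~\ref{Linear} (via $g(t)=f(t,u(t))\in C([0,T'],\mathtt{L}^2(\R))\cap \mathtt{L}^1([0,T'],\mathtt{H}^2(\R))$) spelling out precisely the regularity upgrade that the paper compresses into ``a straightforward calculation.'' The one small deviation is uniqueness: the paper applies Gronwall's lemma directly to the mild formulation for any two solutions in the class \eqref{condi}, which gives uniqueness on all of $[0,T']$ at once and avoids your slightly delicate step of shrinking $T'$ to force a competitor solution into $E_{T'}$.
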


\begin{proof}
The first part of the proof deals with existence. 
	
We consider an open ball $W$ centered at the origin, with radius $\rho > 0$.
	For constants $T > 0$ and $R>0$ to be defined and $M > \rho$, we define the space
	\begin{align*}
	E_T=\{u\in C([0, T],\mathtt{L}^2(\R)): \,\, 
	&\sup_{[0,T]}\|u(t)\|_{\mathtt{H}^2(\R)}\leq R, \, \mbox{and} \\ 
	&\sup_{[0,T]}\|u(t)-U(t,0)\phi\|_{\mathtt{L}^2(\R)} \leq M\}.
	\end{align*}
		
	First, from Lemma \ref{acretivo}, we choose 
	
	\begin{equation}\label{M}
	T\leq \frac{1}{\beta}\ln (\rho^{-1}M), 
	\end{equation}
	where $\beta$ is given by \eqref{acret}.
	The application $g(t)\equiv 0$ satisfies $g\in E_T$. Thus, $E_T$ is not empty.

	From Lemma \ref{closed}, it follows that $E_T$ is a complete metric space with the 	metric
	\begin{equation*}
	d(u,v)=\sup_{[0,T]}\|u(t)-v(t)\|_{\mathtt{L}^2(\R)}. 
	\end{equation*}
	
	For all $u\in E_T$, where $f(u(\tau)):=f(\cdot,\tau,u(\tau))$, the weak continuity of $f(u(\cdot))$ in the ${\mathtt{H}^2(\R)}$-norm follows Hy3 (i), Hy3 (ii), and Lemma~\ref{continuous}.
	
	Defining
	\begin{equation}\label{function-contraction}
	\Phi u (t)=U(t,0)\phi+\int_0^t  U(t,\tau)f(u(\tau))d\tau, \quad t\in [0,T],
	\end{equation}
	it is shown that there is $0<T'\leq T$ such that $\Phi$ is a contraction in $E_{T'}$. 
	
If $u\in E_T$ and $t_1<t$, then
	\begin{equation}\label{sigma}
	\begin{split}
	\Phi u(t)-\Phi u(t_1)=& \ U(t,0)\phi-U(t_1,0)\phi+\int_0^{t_1}(U(t,\tau)-U(t_1,\tau))f(u(\tau))d\tau\\
	&+\int_{t_1}^{t}U(t,\tau)f(u(\tau))d\tau\\
	:=& \ \sigma(t,t_1)+\bar{\sigma}(t,t_1)+\tilde \sigma(t,t_1).
	\end{split}
	\end{equation}
	
	Lemma \ref{propagador}(i) implies that $\sigma(t,t_1)\to 0$, when $t_1\to t$.
	
	From $\mathtt{H}^2(\R)\hookrightarrow \mathtt{L}^2(\R)$ and Hy3 (i), it follows that 
	\begin{equation}\label{imersao}
	\|f(u(\tau))\|_{\mathtt{L}^2(\R)}\leq \|f(u(\tau))\|_{\mathtt{H}^2(\R)}\leq \mu, 
	\end{equation}
	where $\mu$ depends on $R$. Hence, Lemmas \ref{acretivo} and \eqref{imersao} imply that 
	\begin{equation*}
	\begin{split}
	\|\tilde \sigma(t,t_1)\|_{\mathtt{L}^2(\R)} 
	&\leq \int_{t_1}^t \|U(t,\tau)f(u(\tau))\|_{\mathtt{L}^2(\R)} d\tau\\
	&\leq e^{\beta T}\int_{t_1}^t \|f(u(\tau))\|_{\mathtt{L}^2(\R)} d\tau\\
	&\leq \mu (t-t_1) e^{\beta T}\to 0, \,\,\,\mbox{when} \,\,\,t_1\to t.  
	\end{split}
	\end{equation*}
		
	From Hy3 (ii), it follows that $f(u(\tau))\in \mathtt{L}^2(\R)$ for all $\tau \in [0,T]$. Thus, Lemma \ref{propagador} (i) implies that 	
	\begin{equation*}
	\|(U(t,\tau)-U(t_1,\tau))f(u(\tau))\|_{\mathtt{L}^2(\R)}\to 0, \quad \mbox{when} \quad t_1\to t,
	\end{equation*}
	for a.e. $\tau \in [0,T]$.
	
	In addition, from Lemmas \ref{acretivo} and  \eqref{imersao},
	\begin{equation*}
	\|(U(t,\tau)-U(t_1,\tau))f(u(\tau))\|_{\mathtt{L}^2(\R)}\leq 2e^{\beta T} \|f(u(\tau))\|_{\mathtt{H}^2(\R)}\leq 2\mu e^{\beta T}, \quad \tau \in [0,T].
	\end{equation*}
	Then, the dominant convergence theorem implies that
	$$\bar{\sigma}(t,t_1)\to 0,\,\,\,\mbox{when} \,\,\,t_1\to t.$$
	
	Thus, $\Phi u$ is left continuous in $t$. Right continuity  of 
	$\Phi u$ can be obtained similarly. Therefore, $\Phi u \in C([0, T],\mathtt{L}^2(\R))$,
	
	Now, by taking the $\mathtt{H}^2(\R)$-norm in \eqref{function-contraction} and using Hy3 (i) and Lemma~\ref{propagador}(iii), we obtain 
	\begin{equation}\label{contE2}
	\begin{split}
	\|\Phi u (t)\|_{\mathtt{H}^2(\R)} &\leq \|U(t,0)\phi\|_{\mathtt{H}^2(\R)}+
	\int_0^t \|U(t,\tau)f(u(\tau))\|_{\mathtt{H}^2(\R)} d\tau\\
	&\leq e^{\tilde \beta T}\|\phi\|_{\mathtt{H}^2(\R)} + 
	e^{\tilde \beta T}\int_0^t \|f(u(\tau))\|_{\mathtt{H}^2(\R)} d\tau\\
	& \leq e^{\tilde \beta T}(\rho+T'\mu).
	\end{split}
	\end{equation}
	
	Thus, Lemmas \ref{acretivo} and \eqref{imersao} imply that
	\begin{equation}\label{contE1}
	\begin{split}
	\|\Phi u (t)-U(t,0)\phi\|_{\mathtt{L}^2(\R)} &\leq \int_0^t \|U(t,\tau)f(u(\tau))\|_{\mathtt{L}^2(\R)} d\tau\\
	&\leq e^{\beta T} \int_0^t \|f(u(\tau))\|_{\mathtt{L}^2(\R)} d\tau\\
	& \leq T' \mu e^{\beta T},
	\end{split}
	\end{equation}
	and Lemma \ref{acretivo} and Hy3 (iii) imply that
	\begin{equation}\label{cont}
	\begin{split}
	\|\Phi u (t)-\Phi v (t)\|_{\mathtt{L}^2(\R)} &\leq \int_0^t \|U(t,\tau)(f(u(\tau))-
	f(v(\tau)))\|_{\mathtt{L}^2(\R)} d\tau\\
	&\leq e^{\beta T} \int_0^t \|f(\cdot,\tau,u(\tau))-f(\cdot,\tau,v(\tau))\|_{\mathtt{L}^2(\R)} d\tau\\
	& \leq  \kappa e^{\beta T}\int_0^t \|u(\tau)-v(\tau)\|_{\mathtt{L}^2(\R)} d\tau\\
	& \leq T' \kappa e^{\beta T} d(u,v).
	\end{split}
	\end{equation}
	We define the open ball $W' \supset W$ with radius $R$ such that
	\begin{equation}\label{cond R M}
	R > \rho \big(M \rho^{-1}\big)^{\frac{\tilde \beta}{\beta}},
	\end{equation}
	and by using \eqref{M} and \eqref{cond R M}, we see that $R>\rho e^{\tilde \beta T}$.
	Thus, by combining \eqref{contE2}, \eqref{contE1}, and \eqref{cont}, and taking 
	$T^\prime$ that satisfies
	\begin{equation}
	0<T'<\min\Big \{T,\frac{M}{ \mu e^{\beta T}}, \frac{1}{\kappa e^{\beta T}}, \frac{1}{\mu}\Big(\frac{R}{e^{\tilde \beta T}}-\rho\Big)\Big\},
	\end{equation}
	implies that $\Phi : E_{T'}\to E_{T'}$ is a contraction. Therefore, from the Banach fixed-point theorem, equation (\ref{scalar-local-eq}) has a unique mild solution $u$ with $u(0) = \phi$, which is given by \eqref{mild-solution}.
	A straightforward calculation shows that this mild solution satisfies $u\in C([0,T'],W')$ and $\partial_t u \in C([0,T'],\mathtt{L}^2(\R))$, indicating the existence of a solution.  

	Rest to prove the uniqueness.
	
	Let $u$ and $v$ be solutions of \eqref{scalar-local-eq}, satisfying \eqref{condi}  such that $u(0)=v(0)=\phi$. For all $t\in [0,T']$, using \eqref{mild-solution}, Lemma~\ref{acretivo}, and Hy3 (iii), it follows that
	\begin{equation*}
	\begin{split}
	\|u(t)-v(t)\|_{\mathtt{L}^2(\R)} &\leq \int_0^t \|U(t,\tau)(f(u(\tau))-f(v(\tau)))\|_{\mathtt{L}^2(\R)} d\tau\\
	&\leq e^{\beta T'}\int_0^t \|f(\cdot,\tau,u(\tau))-f(\cdot,\tau,v(\tau))\|_{\mathtt{L}^2(\R)} d\tau\\
	&\leq \kappa e^{\beta T'}\int_0^t \|u(\tau)-v(\tau)\|_{\mathtt{L}^2(\R)} d\tau.
	\end{split}
	\end{equation*} 
Therefore, Gronwall's lemma implies that $u(t)=v(t)$, completing the proof.
\end{proof}

\section{Global solution for a scalar equation}\label{scalar-global}

Here, we consider the second-order semilinear scalar equation  
\begin{equation}\label{scalar-global-eq}
\partial_t u - a(x,\, t)\, u_{xx} + b(x,\,t)\,u_x = f(x,\,t,\,u),\,\,x \in \R, \,\, t > 0,
\end{equation}
where $a,\,\, b\, :\Omega = \R \times [0, \infty) \to \R$ and $f : \Omega \times \R \to \R$ are the given functions. We also consider the
family of associated differential operators, defined by 
\begin{equation}\label{ope-LG}
\left(L_G(t)\phi\right)(x) = -a(x,t) \phi^{\prime\prime}(x) + b(x,t) \phi^\prime(x),\;\;\phi \in C^{\infty}_0(\R),\,\,t \in [0, \infty).
\end{equation}

To solve \eqref{scalar-global-eq}, it is necessary to establish hypotheses similar to 
for the local solution in \ref{scalar-local}.

\begin{itemize} 
	\item [] {\bf (Hy4):} The coefficients of the operator $L_G(t)$ satisfy $Hy1$ and $Hy2$,
given in \ref{scalar-local}, when the interval $[0, T]$ is substituted by $[0, \infty)$.
	\item [] {\bf (Hy5):} For $w$ belonging to some open ball $W \subset \mathtt{H}^2(\R)$, the function 
	$(t,\,w) \mapsto f(t, \,w)$ satisfies the same items (i), (ii), and (iii),
    as in Hy3, when interval $[0, T]$ is substituted by $[0, \infty)$.	
\end{itemize}

The proof of the next two lemmas is similar to that of Lemmas~\ref{closure} and \ref{acretivo}, respectively, where the interval $[0, T]$ is substituted by $[0, \infty)$.

\begin{lemma}\label{closureG}
	Assume that the operator $L_G(t)$ satisfies $Hy4$. Subsequently, $L_G(t)$  is
	closable in $\mathtt{L}^2(\R)$ and the domain $D\left(A_G (t)\right)= \mathtt{H}^2(\R)$ of $A_G(t)$, the closure of $L_G(t)$ is independent of $t$.
\end{lemma}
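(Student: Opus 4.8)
The plan is to reproduce the argument of Lemma~\ref{closure} verbatim at each frozen time $t$, the point being that the closability and domain computation there are purely spatial statements about the operator with $t$ held fixed, and never invoke finiteness of the time interval. Hypothesis Hy4 grants precisely Hy1 and Hy2, now on $[0,\infty)$, so the same machinery applies with $\Omega_T$ replaced by $\Omega=\R\times[0,\infty)$.

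First I would fix $t\ge 0$ and split $L_G(t)\phi=B_0(t)\phi+B_1(t)\phi$ on $C_0^\infty(\R)$, where $B_0(t)\phi=-a(\cdot,t)\phi^{\prime\prime}$ and $B_1(t)\phi=b(\cdot,t)\phi^{\prime}$. From Hy4 we have the uniform parabolicity $\mu_0\le a(x,t)\le\mu_1$ together with $a,b,a_x,b_x,a_{xx}\in\mathtt{L}^\infty(\Omega)$. Exactly as in Lemma~\ref{closure}, these bounds show $B_0(t)$ and $B_1(t)$ are closable in $\mathtt{L}^2(\R)$, with closures $A_0(t)$ of domain $\mathtt{H}^2(\R)$ and $A_1(t)$ of domain $\mathtt{H}^1(\R)$, both independent of $t$.

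The key step, taken directly from the bounded-time proof, is the relative-boundedness estimate \eqref{relati},
$$
\|A_1(t)\,\phi\|_{\mathtt{L}^2(\R)}\le\frac{\|b\|_\infty}{2\sqrt{\mu_0}}\Big(\lambda^{-1/2}\,\|A_0(t)\,\phi\|_{\mathtt{L}^2(\R)}+\lambda^{1/2}\,\|\phi\|_{\mathtt{L}^2(\R)}\Big),\qquad\phi\in\mathtt{H}^2(\R),
$$
valid for every $\lambda>0$. Since $\|b\|_\infty$ and $\mu_0$ are furnished by the $\mathtt{L}^\infty(\Omega)$ bounds in Hy4, they are uniform in $t$, and taking $\lambda$ large makes the $A_0(t)$-bound of $A_1(t)$ strictly less than $1$. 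Kato's perturbation theorem \cite[Ch.~IV, Theorem~1.1]{Kato2} then gives that $A_G(t)=A_0(t)+A_1(t)$ is closed with $D(A_G(t))=D(A_0(t))=\mathtt{H}^2(\R)$; consequently $L_G(t)$ is closable, its closure is $A_G(t)$, and the domain $\mathtt{H}^2(\R)$ is independent of $t$.

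The only thing to verify is that no step above appealed to the compactness or finiteness of the time interval, and indeed none did: the entire computation is carried out at a single frozen $t$, and every constant entering the estimates is controlled uniformly in $t$ through the $\mathtt{L}^\infty(\Omega)$ hypotheses of Hy4. For this reason I do not expect any genuine obstacle — replacing $[0,T]$ by $[0,\infty)$ leaves the argument unchanged — and the conclusion follows exactly as in Lemma~\ref{closure}.
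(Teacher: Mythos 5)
Your proposal is correct and coincides with the paper's own treatment: the paper proves Lemma~\ref{closureG} precisely by invoking the argument of Lemma~\ref{closure} with $[0,T]$ replaced by $[0,\infty)$, which is exactly the decomposition $L_G(t)=B_0(t)+B_1(t)$, the relative-boundedness estimate \eqref{relati}, and Kato's perturbation theorem that you reproduce. Your added observation that the constants $\|b\|_\infty$ and $\mu_0$ are uniform in $t$ via the $\mathtt{L}^\infty(\Omega)$ bounds of Hy4 is the (implicit) point that makes the transfer legitimate.
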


\begin{lemma}\label{acretivoG}
	The operator $A_G (t)$, defined in Lemma~\ref{closureG} is quasi m-accretive in $\mathtt{L}^2(\R)$ and uniform in $t$; that is, there is a constant $\beta_G > 0$ such that
	\begin{itemize}
		\item [(i)] $\langle{A_G(t) \phi}, \,{ \phi}\rangle_{\mathtt{L}^2(\R)} \geq - 
		\beta_G \| \phi \|^2_{\mathtt{L}^2(\R)},\;\;\forall \phi \in D\left(A_G(t)\right)=\mathtt{H}^2(\R)$,
		\item [(ii)] $\left( A_G(t) + \lambda \right)$ is onto $\mathtt{L}^2(\R)$ for some (equivalently for all) $\lambda > \beta_G$.
	\end{itemize}
	
	Here,
	$A_G(t) \in G\left(\mathtt{L}^2(\R),\,1,\,\beta_G\right)$, where
	\begin{equation}\label{acret1}
	\beta_G =\dfrac{1}{2} \left(\| a_{xx}\|_{\mathtt{L}^\infty(\Omega)} + \|b_x\|_{\mathtt{L}^\infty(\Omega)}\right).
	\end{equation}
\end{lemma}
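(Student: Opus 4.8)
The plan is to mirror the proof of Lemma~\ref{acretivo} essentially verbatim, replacing $\Omega_T$ by $\Omega$ throughout; the only substantive point to verify is that the stability constant $\beta_G$ stays finite and $t$-independent under the global hypothesis Hy4. First I would establish the accretivity estimate (i). Fix $t \ge 0$ and $\phi \in \mathtt{H}^2(\R)$ and compute $\langle A_G(t)\phi,\,\phi\rangle_{\mathtt{L}^2(\R)}$ by integration by parts. Writing $A_G(t)\phi = -a(\cdot,t)\phi^{\prime\prime} + b(\cdot,t)\phi^{\prime}$, the diffusion term gives, after one integration by parts and then using $a_x\phi\phi^{\prime} = \tfrac{1}{2}a_x(\phi^2)_x$ followed by a second integration by parts,
\[
-\int_\R a\,\phi^{\prime\prime}\phi\,dx = \int_\R a\,(\phi^{\prime})^2\,dx - \frac{1}{2}\int_\R a_{xx}\,\phi^2\,dx,
\]
while the convection term yields $\int_\R b\,\phi^{\prime}\phi\,dx = -\tfrac{1}{2}\int_\R b_x\,\phi^2\,dx$; all boundary terms vanish since $\phi \in \mathtt{H}^2(\R)$. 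Because $a \ge \mu_0 > 0$ (by Hy1, extended to $\Omega$ through Hy4), the nonnegative term $\int_\R a(\phi^{\prime})^2\,dx$ may be discarded, leaving
\[
\langle A_G(t)\phi,\,\phi\rangle_{\mathtt{L}^2(\R)} \ge -\frac{1}{2}\big(\|a_{xx}\|_{\mathtt{L}^\infty(\Omega)} + \|b_x\|_{\mathtt{L}^\infty(\Omega)}\big)\,\|\phi\|^2_{\mathtt{L}^2(\R)},
\]
which is precisely (i) with $\beta_G$ as in \eqref{acret1}. The decisive observation is that Hy4 requires Hy2 to hold with $[0,\infty)$ replacing $[0,T]$, so that $a_{xx},\,b_x \in \mathtt{L}^\infty(\Omega)$ and $\beta_G$ is finite and uniform in $t$.

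For (ii), I would argue exactly as in Lemma~\ref{acretivo}. By Lemma~\ref{closureG} the operator $A_G(t)$ is closed, and combined with the estimate in (i) this forces $R\big(A_G(t)+\lambda\big)$ to be closed for every $\lambda > \beta_G$; hence it suffices to prove density, that is, $R\big(A_G(t)+\lambda\big)^\bot = \{0\}$. Identifying the formal adjoint $S(t)\psi = -\big(a(\cdot,t)\psi\big)_{xx} - \big(b(\cdot,t)\psi\big)_x$ on the core $C_0^\infty(\R)$, with $D\big(A_G^*(t)\big) = \mathtt{H}^2(\R)$, I would take $\phi \in \mathrm{Ker}\big(A_G^*(t)+\lambda\big)$, pair the identity $\big(A_G^*(t)+\lambda\big)\phi = 0$ with $\phi$, integrate by parts, and invoke (i) to obtain $(\lambda - \beta_G)\|\phi\|^2_{\mathtt{L}^2(\R)} \le 0$. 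Since $\lambda > \beta_G$, this gives $\phi = 0$, so the range is dense and therefore equal to $\mathtt{L}^2(\R)$. Combining (i) and (ii) with the characterization of $\mathbb{G}(X,1,\beta)$ recorded just after Definition~\ref{accretive} yields $A_G(t) \in G\big(\mathtt{L}^2(\R),\,1,\,\beta_G\big)$.

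I expect no genuine obstacle beyond bookkeeping: the single place where the passage from a finite horizon to $[0,\infty)$ enters is the finiteness of the $\mathtt{L}^\infty(\Omega)$ norms defining $\beta_G$, which is exactly what Hy4 is designed to supply. The only subtlety worth flagging is that the stability constant must be uniform in $t$; but since $\|a_{xx}\|_{\mathtt{L}^\infty(\Omega)}$ and $\|b_x\|_{\mathtt{L}^\infty(\Omega)}$ are suprema over all of $\Omega$, this uniformity is automatic, and no compactness of the time interval is used anywhere in the argument.
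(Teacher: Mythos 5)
Your proposal is correct and coincides with the paper's own treatment: the paper proves Lemma~\ref{acretivoG} simply by declaring it analogous to Lemma~\ref{acretivo} with $[0,T]$ replaced by $[0,\infty)$, and your write-up reproduces exactly that argument --- the same integration-by-parts accretivity estimate yielding $\beta_G=\tfrac12\big(\|a_{xx}\|_{\mathtt{L}^\infty(\Omega)}+\|b_x\|_{\mathtt{L}^\infty(\Omega)}\big)$, and the same closed-range-plus-dense-range argument via the adjoint for surjectivity of $A_G(t)+\lambda$. You also correctly identify the only point where the global setting enters, namely that Hy4 guarantees the $\mathtt{L}^\infty(\Omega)$ norms are finite so that $\beta_G$ is uniform in $t$.
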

 
In addition to Lemmas \ref{closureG} and \ref{acretivoG}, the next regularity lemma is fundamental to the proof of global existence. 
\begin{lemma}[Regularity] \label{regularidade}
	Let $A_G(t)$ be the operator defined in Lemma~\ref{closureG}, then
	\begin{equation}\label{regu0}
	\langle{A_G(t)\,\phi^\prime},\,{\phi^\prime}\rangle_{\mathtt{L}^2(\R)} \geq -C\,\dfrac{\beta_G^2}{\mu_0}\,\|{\phi}\|^2_{\mathtt{L}^2(\R)},\;\;\forall \phi \in \mathcal{S}(\R).
	\end{equation}
\end{lemma}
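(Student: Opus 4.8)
The plan is to expand $\langle A_G(t)\phi',\phi'\rangle$ by integration by parts and then recognize it as the accretivity computation from Lemma~\ref{acretivo}, applied to $\phi'$ in place of $\phi$. Since $\phi\in\mathcal{S}(\R)$, all derivatives are again Schwartz and every boundary term at $\pm\infty$ vanishes, so every manipulation is justified. Writing $A_G(t)\phi'=-a\,\phi'''+b\,\phi''$ and integrating by parts precisely as in the proof of Lemma~\ref{acretivo} (now with $\phi'$ playing the role of $\phi$), I would obtain the identity
\[
\langle A_G(t)\phi',\phi'\rangle_{\mathtt{L}^2(\R)}=\int_\R a(x,t)\,(\phi''(x))^2\,dx-\frac12\int_\R\big(a_{xx}(x,t)+b_x(x,t)\big)(\phi'(x))^2\,dx.
\]

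Next I would estimate the two terms separately. The first is bounded below by uniform parabolicity (Hy1, \eqref{A1}), namely $\int a(\phi'')^2\ge\mu_0\|\phi''\|_{\mathtt{L}^2(\R)}^2$, while the second is bounded above by $\beta_G\|\phi'\|_{\mathtt{L}^2(\R)}^2$ with $\beta_G$ exactly the constant appearing in \eqref{acret1}. This gives
\[
\langle A_G(t)\phi',\phi'\rangle_{\mathtt{L}^2(\R)}\ge\mu_0\,\|\phi''\|_{\mathtt{L}^2(\R)}^2-\beta_G\,\|\phi'\|_{\mathtt{L}^2(\R)}^2.
\]
At this stage the estimate only reproduces quasi-accretivity in an $\mathtt{H}^1$ sense; the genuine content of the lemma is to convert the regularity gain carried by $\|\phi''\|^2$ into control by $\|\phi\|^2$ alone, which is what makes it useful for the global-existence argument.

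The decisive step is the interpolation inequality $\|\phi'\|_{\mathtt{L}^2(\R)}^2\le\|\phi''\|_{\mathtt{L}^2(\R)}\,\|\phi\|_{\mathtt{L}^2(\R)}$, valid for $\phi\in\mathcal{S}(\R)$ because $\|\phi'\|_{\mathtt{L}^2(\R)}^2=-\langle\phi'',\phi\rangle\le\|\phi''\|_{\mathtt{L}^2(\R)}\|\phi\|_{\mathtt{L}^2(\R)}$ by one integration by parts and Cauchy--Schwarz. Inserting this bound and then applying Young's inequality in the form $\beta_G\|\phi''\|\,\|\phi\|\le\mu_0\|\phi''\|^2+\frac{\beta_G^2}{4\mu_0}\|\phi\|^2$ allows the positive term $\mu_0\|\phi''\|^2$ to absorb the indefinite contribution, leaving
\[
\langle A_G(t)\phi',\phi'\rangle_{\mathtt{L}^2(\R)}\ge-\frac{\beta_G^2}{4\mu_0}\,\|\phi\|_{\mathtt{L}^2(\R)}^2,
\]
which is exactly \eqref{regu0} with $C=\tfrac14$. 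I expect the only delicate point to be the bookkeeping in the integration by parts — keeping track of the coefficient derivatives $a_x,a_{xx},b_x$ and confirming that the Schwartz hypothesis is what legitimately kills the boundary terms — whereas the interpolation-plus-Young maneuver is the essential idea that both closes the estimate and pins down the stated constant.
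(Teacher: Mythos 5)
Your proposal is correct and follows essentially the same route as the paper: the same integration by parts yielding $\mu_0\,\|\phi''\|_{\mathtt{L}^2(\R)}^2-\beta_G\,\|\phi'\|_{\mathtt{L}^2(\R)}^2$, then interpolation of $\|\phi'\|$ between $\|\phi\|$ and $\|\phi''\|$, then absorption of the mixed term into $\mu_0\|\phi''\|^2$. The only cosmetic differences are that you prove the interpolation inequality directly (obtaining the explicit constant $1$ where the paper cites Gagliardo--Nirenberg with a generic constant $c$) and you phrase the absorption as Young's inequality where the paper uses the equivalent bound $\alpha\xi-\mu_0\xi^2\le\alpha^2/(2\mu_0)$, so your $C=\tfrac14$ versus the paper's $C=c^2/2$ is immaterial to the statement.
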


\begin{proof}
	Let $\phi \in \mathcal{S}(\R)$, then
	\begin{align}\label{GN}
	\langle{A_G(t) \phi^\prime },\,{\phi^\prime}\rangle_{\mathtt{L}^2(\R)} &= \langle{-a(\cdot,\,t) \phi^{\prime\prime\prime}},\,{\phi^\prime}\rangle_{\mathtt{L}^2(\R)} + \langle{b(\cdot,\,t) \phi^{\prime\prime}},\,{\phi^\prime}\rangle_{\mathtt{L}^2(\R)} 
	\nonumber\\
	 = &-\int_\R a(x,\,t)\,\phi^{\prime\prime\prime}(x) \phi^\prime(x) \,dx + \int_\R b(x,\,t)\,\phi^{\prime\prime}(x) \phi^\prime(x) \,dx   
	\nonumber\\
	 = & \int_\R a_x(x,\,t) \phi^\prime(x) \phi^{\prime\prime}(x)\,dx + \int_\R a(x,\,t) \left(\phi^{\prime\prime}(x)\right)^2\,dx 
	\nonumber \\
	-&\dfrac{1}{2} \int_\R b_x(x,\,t) \left(\phi^\prime(x)\right)^2(x)\,dx  
	 = -\dfrac{1}{2} \int_\R a_{xx}(x,\,t)\left( \phi^\prime\right)^2(x)\,dx 
	\nonumber \\ 
	 +& \int_\R a(x,\,t) \left(\phi^{\prime\prime}(x)\right)^2\,dx  
	 -\dfrac{1}{2} \int_\R b_x(x,\,t) \left(\phi^\prime(x)\right)^2\,dx 
	\nonumber \\
	 \geq & \mu_0\,\int_\R \left(\phi^{\prime\prime}(x)\right)^2\,dx - \dfrac{1}{2} \int_\R \left(a_{xx}(x,\,t) + b_x(x,\,t)\right)\, \left(\phi^\prime(x)\right)^2\,dx 
	\nonumber \\
	 \geq & \mu_0\,\|{\phi^{''}}\|^2_{\mathtt{L}^2(\R)}   -\beta_G\,\|{\phi^\prime}\|^2_{\mathtt{L}^2(\R)}\,.
	\end{align}
	Using the Gagliardo-Nirenbreg inequality, we obtain
	\begin{equation}\label{Ineq}
	\|{\phi^\prime}\|_{\mathtt{L}^2(\R)} \leq c\,\|{\phi^{\prime\prime}}\|_{\mathtt{L}^2(\R)}^{1/2}\,\|{\phi}\|_{\mathtt{L}^2(\R)}^{1/2},
	\end{equation}
	and by using (\ref{Ineq}) in (\ref{GN}), we have
	\begin{equation}\label{REG}
	\langle{A_G(t) \phi^\prime },\,{\phi^\prime}\rangle_{\mathtt{L}^2(\R)}   \geq  \mu_0\,\|{\phi^{\prime\prime}}\|^2_{\mathtt{L}^2(\R)}   -c\,\beta_G\,\|{\phi^{\prime\prime}}\|_{\mathtt{L}^2(\R)}\,\|{\phi}\|_{\mathtt{L}^2(\R)}.
	\end{equation}
	
	For a positive constant  $\alpha$, we know that
	\begin{equation}\label{REG1}
	\alpha\,\xi - \mu_0\,\xi^2 \leq \dfrac{\alpha^2}{2\mu_0},\;\;\forall\, \xi \geq 0.
	\end{equation}
	
	Thus, combining (\ref{REG}) and (\ref{REG1}), we obtain inequality \eqref{regu0}.
\end{proof}

The global solution theorem is now stated.

\begin{theorem} [Global solution]\label{scalar-global-theo}
	
	We assume that Hy4 and Hy5 are satisfied.  If $\phi \in W$, then equation
	\eqref{scalar-global-eq} has a unique solution
	\begin{equation}\label{condi1}
	u \in C([0,\,\infty), \,W') \cap C^1 ( [0,\,\infty), \,\mathtt{L}^2(\R)),\;\;\mathrm{with}\;\;u(0) = \phi,
	\end{equation}
	for the same open ball, $W'\supset W$.
\end{theorem}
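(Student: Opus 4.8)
The plan is to run a continuation argument on top of the local existence result, the only genuinely new ingredient being a finite-time a priori bound in the $\mathtt{H}^2(\R)$-norm. First I would invoke Theorem~\ref{scalar-local-theo} (its hypotheses Hy1--Hy3 hold on every finite interval because of Hy4 and Hy5) to produce the unique local solution, and set $T^*=\sup\{T>0:\text{the solution exists on }[0,T]\}$, so that $u$ lives on the maximal interval $[0,T^*)$. The structural point I would exploit is that the local existence time furnished by Theorem~\ref{scalar-local-theo} depends on the data only through $\|\phi\|_{\mathtt{H}^2(\R)}$ and through the fixed constants $\beta_G,\mu,\kappa$ attached to a fixed ball. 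Hence, once one shows $\sup_{[0,T^*)}\|u(t)\|_{\mathtt{H}^2(\R)}<\infty$, from a starting time $t_0<T^*$ close enough to $T^*$ the local theorem produces a solution on $[t_0,t_0+\delta]$ with $\delta>0$ independent of $t_0$; gluing this to $u$ extends the solution past $T^*$, contradicting maximality unless $T^*=\infty$. Global uniqueness is then immediate, since two global solutions coincide on each $[0,T]$ by the Gronwall argument already used for local uniqueness.

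The core is therefore the a priori estimate, which I would build up one Sobolev order at a time. At the $\mathtt{L}^2$ level, pairing the equation with $u$ and using quasi-$m$-accretivity of $A_G(t)$ from Lemma~\ref{acretivoG} gives $\tfrac12\frac{d}{dt}\|u\|_{\mathtt{L}^2}^2\le \beta_G\|u\|_{\mathtt{L}^2}^2+\|f\|_{\mathtt{L}^2}\|u\|_{\mathtt{L}^2}$, and since $f$ is $\mathtt{H}^2$-bounded on the ball by Hy5(i), Gronwall yields a bound on $\|u(t)\|_{\mathtt{L}^2}$ that is finite on every finite interval. For the first-derivative level I would differentiate the equation in $x$ and test against $u_x$; here Lemma~\ref{regularidade} is exactly the tool that makes the estimate accretive, supplying $\langle A_G(t)u_x,u_x\rangle\ge -C\tfrac{\beta_G^2}{\mu_0}\|u\|_{\mathtt{L}^2}^2$ (equivalently the dissipative bound $\mu_0\|u_{xx}\|_{\mathtt{L}^2}^2-\beta_G\|u_x\|_{\mathtt{L}^2}^2$ isolated in its proof). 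Modulo lower-order commutator terms, which involve only $a_x,a_{xx},b_x$ and are controlled by Hy4, this produces a Gronwall inequality for $\|u_x(t)\|_{\mathtt{L}^2}^2$ and hence a finite-interval bound on the full $\mathtt{H}^1$-norm.

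The remaining, and hardest, step is to upgrade this to a pointwise-in-time bound on $\|u_{xx}(t)\|_{\mathtt{L}^2}$: the spatial energy identity only returns $u\in L^2_{loc}(\mathtt{H}^2)$, and a direct second-order spatial estimate is blocked because Hy2/Hy4 control the coefficients only up to $a_{xx},b_x$, not $a_{xxx},b_{xx}$. My plan is to bypass this via the elliptic regularity built into the equation: from $a\,u_{xx}=b\,u_x+f-u_t$ with $a\ge\mu_0$ one gets $\|u_{xx}\|_{\mathtt{L}^2}\le \mu_0^{-1}\big(\|b\|_\infty\|u_x\|_{\mathtt{L}^2}+\|f\|_{\mathtt{L}^2}+\|u_t\|_{\mathtt{L}^2}\big)$, so it suffices to bound $\|u_t(t)\|_{\mathtt{L}^2}$. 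That I would obtain by a time-differentiated energy estimate: with $w=u_t$, the identity $\partial_t w+A_G(t)w=-\dot A_G(t)u+\partial_t f$ paired with $w$ and Lemma~\ref{acretivoG} gives $\tfrac12\frac{d}{dt}\|w\|^2\le\beta_G\|w\|^2+\|\dot A_G(t)u\|\,\|w\|+\langle\partial_t f,w\rangle$, where $\|\dot A_G(t)u\|$ is controlled by the $\mathtt{H}^1$ bound together with the elliptic expression for $\|u_{xx}\|$ just derived (the resulting self-referential inequality closing by Young), the time-regularity of the coefficients being supplied by Hy2(ii)/Hy4, and $\partial_t f=f_t+f_u u_t$ being handled by the $\mathtt{L}^2$-Lipschitz bound of Hy5(iii). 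Gronwall then bounds $\|u_t(t)\|_{\mathtt{L}^2}$ on finite intervals, completing the $\mathtt{H}^2$ estimate. I expect the principal obstacle to be the rigorous justification of these formal manipulations, since a priori $u$ only belongs to $C([0,T^*),\mathtt{H}^2(\R))\cap C^1([0,T^*),\mathtt{L}^2(\R))$; I would resolve this in the standard way, approximating $\phi$ by Schwartz data (for which the solutions are smooth and Lemma~\ref{regularidade} applies literally), deriving the estimates uniformly, and passing to the limit, or equivalently replacing $u_t$ by time difference quotients throughout.
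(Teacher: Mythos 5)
Your global skeleton --- maximal interval $[0,T^*)$, a local existence time uniform over data in a fixed ball, and a finite-time a priori $\mathtt{H}^2(\R)$ bound forcing $T^*=\infty$ --- coincides with the paper's proof, as do your zeroth- and first-order energy steps via Lemma~\ref{acretivoG} and Lemma~\ref{regularidade}. The divergence, and the genuine gap, is in your second-order step. The time-differentiated estimate requires quantitative control of the $t$-increments of $f$: writing $w=u_t$, or rigorously the difference quotient $w_h(t)=h^{-1}\big(u(t+h)-u(t)\big)$, the forcing splits as $h^{-1}\big(f(t+h,u(t+h))-f(t+h,u(t))\big)+h^{-1}\big(f(t+h,u(t))-f(t,u(t))\big)$; the first piece is bounded by $\kappa\|w_h\|_{\mathtt{L}^2(\R)}$ via Hy5(iii), but the second is controlled by nothing in the hypotheses --- Hy5(ii) asserts only that $t\mapsto f(t,w)$ is \emph{continuous} into $\mathtt{L}^2(\R)$, not Lipschitz or differentiable, so the term $f_t$ in your identity $\partial_t f=f_t+f_u u_t$ need not exist even distributionally with an $\mathtt{L}^2$ bound, and the Gronwall inequality for $\|w\|_{\mathtt{L}^2(\R)}$ does not close. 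Approximating $\phi$ by Schwartz data cannot repair this, because the defect lies in the time regularity of the source, not in the smoothness of $u$. (In the combustion application the $t$-dependence of $f$ enters through $y_i$, which Hy7 makes differentiable in $t$ with bounded derivative, so your route could be salvaged there; but as a proof of Theorem~\ref{scalar-global-theo} under Hy5 alone, it fails at this step.)

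The paper never touches $u_t$: it differentiates the equation twice in $x$, obtaining \eqref{vxx} for $z=u_{xx}$ with the operator $A_{G_2}$ (coefficients $a$ and $b-2a_x$), estimates the top-order pairing by Lemma~\ref{regularidade} applied with $\phi=u_x$, so that $-\langle A_{G_2}(t)z,\,z\rangle_{\mathtt{L}^2(\R)}\le C\,\beta_{G_2}^2\,\mu_0^{-1}\|u_x\|^2_{\mathtt{L}^2(\R)}$, and handles the source by Hy5(i), which bounds $f(t,u)$ in $\mathtt{H}^2(\R)$ and hence gives $\|\partial_x^2 f\|_{\mathtt{L}^2(\R)}\le\mu$ for free --- precisely the spatial regularity your ``blocked'' assessment overlooked. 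In particular no $a_{xxx}$ is ever needed: $a$ is differentiated at most twice in deriving \eqref{vxx}, and the regularity lemma absorbs the remaining top-order contribution. Your instinct about the coefficients is half right, though: the paper's estimate \eqref{w} does invoke $\|b_{xx}\|_{\mathtt{L}^\infty(\Omega)}$ (from the commutator term involving $b_{xx}$ in \eqref{vxx}), which is not granted by Hy2/Hy4 as literally stated, although it does hold in the combustion application by Hy6, where $c_i$ is three times differentiable. The clean repair is therefore to add the $b_{xx}\in\mathtt{L}^\infty(\Omega)$ assumption and run the paper's direct spatial estimate, rather than to reroute through $u_t$, which merely trades the missing $b_{xx}$ bound for missing time-regularity of $f$.
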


\begin{proof}
Assuming that $u$ is the local solution with $u(0)=\phi$, given by Theorem \ref{scalar-local-theo} and, we define 
\begin{align} \label{uniqueT}
T^* = 
\sup \big\{ T > 0, \,\,\, \mbox{such that the local solution $u$
  is defined in $[0,\,T]$} \big\}\,.
\end{align}
Meanwhile, we assume $T^* < \infty$. Let $A_{G}(t)$ be the operator defined in lemma ~\ref{closureG}. Then, 
 equation ~\eqref{scalar-global-eq} is equivalent to 	
	\begin{equation}\label{scalar-AG-eq}
	\partial_t u+A_{G}(t)u=f(t,u).
	\end{equation}
	From Hy5, we obtain
	\begin{equation}\label{f1g}
	\|f(t, \,u)\|_{\mathtt{H}^2(\R)}\leq \mu,\,\,\mbox{for} \,\,\,t \in [0, \infty),\,\,\mbox{and}\,\,\,u \in W,
	\end{equation}
	where $W$ is an open ball in $\mathtt{H}^2(\R)$ and $\mu$ is a constant independent of $t$.
	
	By applying the operator $\partial_x$ on both sides of \eqref{scalar-AG-eq}, we obtain
	\begin{equation}\label{vx}
	\partial_t v +  A_{G_1}(t)\,v + b_x(\cdot,\,t)\,v = \partial_x f(\cdot, t, u),
	\end{equation}
	where $v=\partial_x u$, $A_{G_1}(t)$  is defined as in Lemma~\ref{closureG}, where $a_1(x,\,t)= a(x,\,t)$ and $b_1(x,\,t)=b(x,\,t) -  a_x (x,\,t)$. Setting $z=\partial_x^2 u$, from \eqref{vx}, we obtain
	\begin{equation}\label{vxx}
	\partial_t z +  A_{G_2}(t)\,z + \left(2\,b_x(\cdot,\,t) - a_{xx}(\cdot,\,t)\right)\,z + 
	b_{xx}(\cdot,\,t)\,\partial_x v = \partial_x^2 f(\cdot, t, u),
	\end{equation}
	where  $A_{G_2}(t)$ is defined as in Lemma~\ref{closureG}, $a_2(x,\,t)= a(x,\,t)$, and $b_2(x,\,t)=b(x,\,t) - 2 a_x (x,\,t)$.
	
	Using equations \eqref{scalar-AG-eq}, \eqref{f1g}, and Lemma~\ref{acretivoG}, we obtain 	
\begin{equation}
\begin{split}\label{v}
\frac12 \frac{d}{dt}\|{u(t)}\|^2_{\mathtt{L}^2(\R)}&=-\langle{A_{G}(t)u},u\rangle_{\mathtt{L}^2(\R)}+\langle{f(t,u)}, {u}\rangle_{\mathtt{L}^2(\R)}\\
&\leq (\beta_G + \mu)\|{u}\|^2_{\mathtt{L}^2(\R)}. 
\end{split}
\end{equation}
	
	From \eqref{vxx}, Lemma~\ref{regularidade}, and \eqref{f1g}, we obtain
	
	\begin{align} \label{w}
	\frac12 &\frac{d}{dt}\|{z(t)}\|^2_{\mathtt{L}^2(\R)}=-\langle{A_{G_2}(t)z}, z\rangle_{\mathtt{L}^2(\R)}-\langle{(2b_x-a_{xx})z-b_{xx}\partial_x u},\,z\rangle_{\mathtt{L}^2(\R)} 
	\nonumber \\
	&+ \langle{\partial_x^2 f}, z\rangle_{\mathtt{L}^2(\R)} 
	 \leq const\, \beta_{G_2}^2\|\partial_x u\|_{\mathtt{L}^2(\R)}^2+\big(2\|b_x\|_{\mathtt{L}^\infty(\Omega)}+
	\|a_{xx}\|_{\mathtt{L}^\infty(\Omega)}\big)\|z\|_{\mathtt{L}^2(\R)}^2
	\nonumber \\ 
	& + \|b_{xx}\|_{\mathtt{L}^\infty(\Omega)}\|\partial_x u\|_{\mathtt{L}^2(\R)}\|z\|_{\mathtt{L}^2(\R)} + \mu\|{z}\|_{\mathtt{L}^2(\R)}.
	\end{align}
	The constants $\beta_{G}$ and $\beta_{G_2}$ are defined in Lemma \ref{acretivoG}.
	Thus, as 
	\begin{equation}
	\|{\partial_x u}\|_{\mathtt{L}^2(\R)} \leq const\, \|{u}\|_{\mathtt{L}^2(\R)}+\|{\partial_x^2 u}\|_{\mathtt{L}^2(\R)},
	\end{equation}
	inequalities \eqref{v}, \eqref{w}, and Gronwall's lemma imply that
	\begin{align}\label{desv}
	&\|{u(t)}\|_{\mathtt{L}^2(\R)} \leq  \, \|{u(0)}\|_{\mathtt{L}^2(\R)}\,e^{(\beta_G + \mu)t}, \\ 
	& \mbox{and} \nonumber \\
	&\|{u_{xx}(t)}\|_{\mathtt{L}^2(\R)} \leq \,\|{u_{xx}(0)}\|_{\mathtt{L}^2(\R)}\,C\big(\mu,\|{\phi}\|_{\mathtt{L}^2(\R)},\, \|{\psi}\|_{\mathtt{L}^2(\R)},\beta_{G},\,\beta_{G_2},\,\|{b_x}\|_{\mathtt{L}^2(\Omega)},\,\nonumber \\ 
	& \phantom{-----}\|{b_{xx}}\|_{\mathtt{L}^2(\Omega)},\,t\big),\label{desv2}
	\end{align}
	for all $t\in [0,\,T^*)$, where $C$ is a continuous function of $t$.
	Therefore,  inequalities \eqref{desv} and \eqref{desv2} imply that
	\begin{equation}\label{finalestima}
	\|{u(t)}\|_{\mathtt{H}^2(\R)} \leq \|{u(0)}\|_{\mathtt{H}^2(\R)} \,\Psi(t),\;\;\forall t \in [0,\,T^*),
	\end{equation}
	where $\Psi$ is a uniformly continuous function of $t$. Thus, the local solution $u$ can be extended beyond $T^*$, in contrast to the definition of $T^*$. This completes the proof.
\end{proof}

\end{appendix}

\section*{Homage} 
M. R. Batista, A. Cunha, J. C. Da Mota, and R. A. Santos leave sincere homage to author E. A. Alarcon, who greatly contributed to this article, but unfortunately left us early.

\bibliographystyle{plain}
\bibliography{semilinear}

\end{document}